\documentclass[12pt,a4paper]{article}
\usepackage{amssymb}
\usepackage{amsmath}
\usepackage{amsthm}
\usepackage{indentfirst}
\usepackage{ot1patch}
\usepackage{psfrag}
\usepackage{rotate}
\usepackage{hyperref}
\usepackage{pdflscape}
\usepackage{multirow}
\usepackage{tikz}

\newtheorem{tw}{Theorem}[section]
\newtheorem{pr}[tw]{Proposition}
\newtheorem{lm}[tw]{Lemma}
\newtheorem{cor}[tw]{Corollary}

\theoremstyle{definition}

\newtheorem{ex}[tw]{Example}

\DeclareMathOperator{\Irr}{Irr}
\DeclareMathOperator{\Sqf}{Sqf}

\DeclareMathOperator{\Prime}{Prime}
\DeclareMathOperator{\Gpr}{Gpr}

\DeclareMathOperator{\lcm}{lcm}

\newcommand{\rpr}{\mbox{\small\rm rpr}}

\author{\L ukasz Matysiak\\
Kazimierz Wielki University\\
Bydgoszcz, Poland \\
lukmat@ukw.edu.pl}
\title{A polynomial composites}

\begin{document}

\maketitle

\begin{abstract}
Polynomial composites were introduced by Anderson, Anderson, and Zafrullah. In this paper we study many different algebraic properties of polynomial composites like ACCP, atomic, SR property. We study relationships between Noetherian polynomial composites certain field extensions.  
\end{abstract}

\begin{table}[b]\footnotesize\hrule\vspace{1mm}
	Keywords: domain, field, irreducible element, polynomial.\\
2010 Mathematics Subject Classification:
Primary 13B25, Secondary 13F20.
\end{table}

\section{Introduction}
\label{R1}

By the ring we mean a commutative ring with unity. Let $R$ be an itegral domain.
We denote by $R^{\ast}$ the group of all invertible elements of $R$. 

\medskip

The main motivation of this paper is description polynomial composites as algebraic object. 
The related works were started in paper \cite{Matysiak1}, where basic algebraic properties have been investigated. Continued in \cite{Matysiak2}, where the focus was on ACCP properties and atomicity. 
Next, in \cite{Matysiak3}, where we have research related to various classes of rings as polynomial composites. The article \cite{Matysiak4} shows very strong relationships between polynomial composites and field extensions, and therefore relationships with the Galois theory.

\medskip

D.D.~Anderson, D.F.~Anderson, M. Zafrullah in \cite{1} called object $A+XB[X]$ as a composite, where $A\subset B$ are fields. 

\medskip

In 1972 \cite{Gil} and in 1976 \cite{y1} authors considered the structures in the form $D+M$, where $D$ is a domain and $M$ is a maximal ideal of ring $R$ with $D\subset R$. 
Next, Costa, Mott and Zafrullah (\cite{y2}, 1978) considered composites in the form $D+XD_S[X]$, where $D$ be a domain and $D_S$ be a localization of $D$ relative to the multiplicative subset $S$. In 1988 \cite{y5} Anderson and Ryckaert studied classes groups $D+M$.
Zafrullah in \cite{y3} continued research on structure $D+XD_S[X]$ but he showed that if $D$ be a GCD-domain, then the behaviour of $D^{(S)}=\{a_0+\sum a_iX^i\mid a_0\in D, a_i\in D_S\}=D+XD_S[X]$ depends upon the relationship between $S$ and the prime ideals $P$ of $D$ such that $D_P$ be a valuation domain (Theorem 1, \cite{y3}).
Fontana and Kabbaj in 1990 (\cite{y4}) studied the Krull and valuative dimensions of composite $D+XD_S[X]$. 
In 1991 there was an article (\cite{1}) that collected all previous results about composites and authors began to create a further theory about composites creating results.
Of course, let $K\subseteq L$, where $K$, $L$ are fields, polynomial composites can be applied to algebraic geometry by the fact that $K+XL[X]/XL[X] = K$, i.e $XL[X]$ is the maximal ideal of $K+XL[X]$. Also in the basics of economics (\cite{eco}, \cite{eco2}) we can find the use of polynomials, in particular polynomial composites.
In 2009 autors considered a construction of $A+XB[X]$, where $A\subset B$ be an extension of commutative rings, as particular case of the general construction of amalgamated algebras introduced in \cite{amal}, Example 2.5.

In this paper, the considered structures were officially called composites.

\medskip

In the section \ref{R2} we present basic properties of polynomial composites with possible different configurations of rings, domains, fields, etc. We will show some simple properties from commutative algebra for these structures. In the Theorem \ref{t2.2} we will show that every nonzero prime ideal in $A+XB[X]$ ($A$, $B$ are fields with $A\subset B$), every prime ideal different from $XB[X]$ is principal and $A+XB[X]$ is atomic ring. In Corollary \ref{c2.3} and Theorem \ref{t2.4} we have a characterization of irreducible polynomials in composites. By motivation from \cite{Matysiak5} and \cite{Matysiak6} in Theorem \ref{t2.5} and Corollary \ref{c2.6} we show a characterization of square-free polynomials in composites. Moreover, in \cite{Matysiak7} can find a characterization of radical polynomials in composites.

\medskip

In the section \ref{R3} we present many conditions in polynomial composites. Recall, we say that $R$ satysfies ACCP condition (has ACCP) if every ascending chain of principal ideals of $R$ stabilizes. 
Following P.M. Cohn \cite{Cohn} we say that $R$ is atomic if every nonzero nonunit of $R$ can be factored into irreducibles (atoms).
If an integral domain satisfies ACCP, then it is atomic. However, there are atomic domains that do not satisfy ACCP (the first example was constructed by A. Grams in \cite{Grams}).
In a Proposition \ref{p3.1} we will show when a ring $R$ such that $A[X]\subseteq R\subseteq B[X]$ ($A$, $B$ are fields) is ACCP-ring. A complement of this Proposition we have in the Theorem \ref{t3.2}. In the Theorem \ref{t3.4} we have a condition, when $A+X[B]$ has ACCP ($A$ is domain, $B$ is quotient field of $A$).
The domain $R$ is a bounded factorization domain (BFD) if it is atomic and for every nonzero nonunit $x\in R$ there is a positive integer $N$ such that $x=a_1\dots a_n$ for irreducibles $a_1, \dots, a_n\in R$ implies that $n\leqslant N$ (Propositions \ref{p3.6}, \ref{p3.7}).  
We say that $R$ is a half-factorial domain (HFD) if is atomic and any two factorizations of the same nonzero nonunit of $R$ have the same number of irreducibles (counting repetitions) (Propositions \ref{p3.8}, \ref{p3.9}). Clearly a UFD is a HFD, a HFD is also a BFD, and hence satisfies ACCP (see \cite{AndGotti}). 
The domain $R$ is an idf-domain (for irreducible-divisor-finite) if each nonzero element of $R$ has at most a finite number of nonassociate irreducible divisors (Propositions \ref{p3.10}, \ref{p3.11}). 
A domain is called finite factorization domain (FFD) if each nonzero nonunit element has only a finite number of nonassociate divisors (Proposition \ref{p3.12}). A UFD is a FFD, a FFD is a HFD, a FFD is a BFD (see \cite{AndGotti}). In general,

$$\begin{array}{ccccccccc}
	&&HFD\\
	&\mbox{\rotatebox{-135}{$\Leftarrow$}} & \Uparrow & \mbox{\rotatebox{135}{$\Leftarrow$}} \\
	UFD&\Rightarrow &FFD&\Rightarrow &BFD&\Rightarrow ACCP&\Rightarrow& atomic  \\
	&\mbox{\rotatebox{135}{$\Leftarrow$}}&\Downarrow \\
	&&idf
\end{array}$$

Recall that $R$ is an S-domain (S is for Seidenberg) if for each height-one prime ideal $P$ of $R$, $ht P[X]=1$ in $R[X]$ (Proposition \ref{p3.15}).
A commutative ring $R$ is called a Hilbert ring if every prime ideal of $R$ is an intersection of maximal ideals of $R$ (Theorem \ref{t3.16}).

\medskip

The main motivation of the Section \ref{R4} is to answer the following question.

\medskip

\noindent
{\bf Question:}\\
Is there a relationship between certain field extensions $K\subset L$ and polynomial composites $K+XL[X]$?

\medskip

We present a full possible characterization of polynomial composites in the form $K+XL[X]$, where $K$, $L$ are fields, with respect to a given extension with appropriate additional assumptions. We also present a full possible characterization of some extensions of fields $K\subset L$ expressed in the language of polynomial composites $K+XL[X]$ as Noetherian rings with appropriate assumptions.
From Theorem \ref{t4.5} we will know that $K+XL[X]$ is Noetherian iff $K\subset L$ is finite. 
In Propositions \ref{p4.6} -- \ref{p4.11} we can find similar statements with such extensions as algebraic, separable, normal, Galois.
We will remind results of Magid from \cite{Magid} (Theorems \ref{t4.12}, \ref{t4.13}) and we will supplement these statements with additional conditions (Theorems \ref{t4.14} and \ref{t4.15}). 

\medskip

In the last Section we describe an SR-property in polynomial composites. In \cite{Matysiak7} we have general results for any integral domains and monoids. Recall from \cite{Matysiak5} that an $SR$-domain (monoid) is a domain (monoid) in which every square-free element is radical. Naturally, every radical element is square-free, so in $SR$-domain (monoid) the set of square-free is the same like the set of radical elements.

\section{Basic properties}
\label{R2}

The aim of this chapter will be to examine the simplest structural properties of the considered structures.

Consider $A$ and $B$ as rings such that $A\subset B$.
Put $T=A+XB[X]$. The structure defined in this way is called a composite. (The definition comes from \cite{1}). Later results will require more assumptions.

\medskip

Let's look at invertible polynomials.

\begin{pr}
	\label{p2.1}
	Let $f=a_0+a_1X+\dots + a_nX^n\in T$ for any $n\geq 0$. Then $f\in T^{\ast}$ if and only if $a_0\in A^{\ast}$ and $a_1, a_2, \dots , a_n$ are nilpotents.
\end{pr}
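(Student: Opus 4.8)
The plan is to treat the two implications separately, keeping in the background the classical description of units in a polynomial ring over a commutative ring: a polynomial $\sum_i c_iX^i$ is a unit of that polynomial ring exactly when $c_0$ is a unit and $c_1,\dots,c_n$ are nilpotent.

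For the forward implication I would start from an inverse $g=b_0+b_1X+\dots+b_mX^m\in T$ of $f$, so that $b_0\in A$ and $b_1,\dots,b_m\in B$, and extract the information encoded in $fg=1$. Comparing constant terms gives $a_0b_0=1$ with both $a_0,b_0\in A$, which already yields $a_0\in A^{\ast}$. Since $f$ and $g$ also lie in the larger ring $B[X]$ and satisfy $fg=1$ there, $f$ is a unit of $B[X]$, and the classical description then forces $a_1,\dots,a_n$ to be nilpotent.

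For the converse, assume $a_0\in A^{\ast}$ and that $a_1,\dots,a_n$ are nilpotent, and write $f=a_0+g$ with $g=a_1X+\dots+a_nX^n\in XB[X]$. Because the nilpotent elements of $B$ form the nilradical, an ideal of $B$, the polynomial $g$ lies in the nilradical of $B[X]$ and is therefore nilpotent, say $g^{r}=0$. Then $a_0^{-1}g$ is nilpotent as well, so $1+a_0^{-1}g$ is invertible with inverse $\sum_{k=0}^{r-1}(-1)^{k}(a_0^{-1}g)^{k}$, and one concludes that $f=a_0\bigl(1+a_0^{-1}g\bigr)$ is invertible by writing $f^{-1}=a_0^{-1}\bigl(1+a_0^{-1}g\bigr)^{-1}$.

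The step that needs genuine care, and which distinguishes this statement from the classical one, is verifying that this inverse actually lies in $T$ rather than merely in $B[X]$. Here I would use that $XB[X]$ is an ideal of $B[X]$, so every power $(a_0^{-1}g)^{k}$ with $k\geq1$ stays in $XB[X]$, while the $k=0$ term contributes $1\in A$; hence the finite sum giving $\bigl(1+a_0^{-1}g\bigr)^{-1}$ lies in $A+XB[X]=T$, and multiplying by $a_0^{-1}\in A\subseteq T$ keeps the result in $T$. This containment is the only real obstacle; everything else is routine bookkeeping with coefficients.
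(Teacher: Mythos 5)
Your proof is correct and rests on the same classical description of $R[X]^{\ast}$ that the paper invokes, so the method is essentially the same. The only real difference is one of completeness: the paper's proof is a two-line assertion, while you actually carry out the two steps that make the statement nontrivial for the subring $T$ --- deducing $a_0\in A^{\ast}$ (rather than merely $a_0\in B^{\ast}\cap A$) from the constant-term identity $a_0b_0=1$ with $b_0\in A$, and verifying that the inverse $a_0^{-1}\sum_{k}(-1)^{k}(a_0^{-1}g)^{k}$ lands in $A+XB[X]$ because every positive power of $a_0^{-1}g$ stays in the ideal $XB[X]$.
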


\begin{proof}
	We know that if $R$ is a ring then $f=a_0+a_1X+\dots + a_nX^n\in R[X]^{\ast}$ if and only if $a_0\in R^{\ast}$ and $a_1, a_2, \dots , a_n$ are nilpotents. In our Proposition we have $a_1, a_2, \dots , a_n$ are nilpotents. Of course we get $a_0\in A^{\ast}$.
\end{proof}

\begin{tw}
	\label{t2.2}
	Let $A$, $B$ be fields such that $A\subset B$. Let $X$ be an indeterminate over $B$ and let $T=A+XB[X]$. Then	
	\begin{itemize}
		\item[(1) ] every nonzero prime ideal of $T$ is maximal;
		\item[(2) ] every prime ideal $P$ different from $XB[X]$ is principal;
		\item[(3) ] $T$ is atomic.
	\end{itemize} 
\end{tw}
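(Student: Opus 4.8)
My plan is to exploit the inclusion $T\subseteq B[X]$, where $B[X]$ is a principal ideal domain, together with the two features that make $T$ tractable: the constant term of an element of $T$ lies in the field $A$, and every nonzero element of $B$ is invertible. I first record the preliminary facts. By Proposition \ref{p2.1} (with $A$, $B$ fields, so that $T$ is reduced) the units of $T$ are exactly the nonzero constants $A^{\ast}$, and $T/XB[X]\cong A$ is a field, so $XB[X]$ is a maximal ideal. The recurring device throughout is the following \emph{normalization trick}: if $g\in B[X]$ has $g(0)\neq0$, then $g(0)^{-1}g\in T$ has constant term $1$; this is where the hypothesis that $B$ is a field is essential.

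For part (1) I would argue that $\dim T=1$ by localizing at $X$. Since $X\in T$ and $bX\in T$ for every $b\in B$, inverting $X$ yields $b=(bX)X^{-1}$, so $T[X^{-1}]=B[X,X^{-1}]$, a localization of the PID $B[X]$ and hence itself a one-dimensional PID. Let $P$ be a nonzero prime. If $X\in P$ then, using $(bX)^2=X\cdot(b^2X)\in P$ and primality, one gets $bX\in P$ for all $b\in B$, whence $XB[X]\subseteq P$ and $P=XB[X]$ is maximal. If $X\notin P$, then $P$ is disjoint from $\{X^n\}$, so its extension $P^{e}$ is a nonzero, hence maximal, prime of the one-dimensional domain $B[X,X^{-1}]$, and $P=P^{e}\cap T$. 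The point needing care is to rule out a prime $Q\supsetneq P$: a prime with $X\notin Q$ would contradict maximality of $P^{e}$, while the only prime containing $X$ is $XB[X]$, so it remains to check $P\not\subseteq XB[X]$. The hard part of (1) is exactly this step, and here the normalization trick closes the gap: for $0\neq f\in P$ write $f=X^{m}h$ with $h(0)\neq0$; then $h=X^{-m}f\in P^{e}$, so $h(0)^{-1}h\in P^{e}\cap T=P$ has constant term $1$ and thus does not lie in $XB[X]$.

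For part (2) I would use the same localization. For a nonzero prime $P\neq XB[X]$ we have $X\notin P$, so $P^{e}$ is a maximal ideal of the PID $B[X,X^{-1}]$ and hence equal to $(q)$ for some irreducible $q$; clearing powers of $X$ and applying the normalization trick I may take $q\in B[X]$ irreducible with $q(0)=1$, so that $q\in T$ and $q\in P^{e}\cap T=P$. It remains to see that $q$ generates $P$. I would first show primality descends: if $q\mid ab$ in $T$ then $q\mid a$ (say) in $B[X]$, and writing $a=qc$ the identity $a(0)=c(0)$ forces $c(0)\in A$, so $c\in T$ and $q\mid a$ in $T$. Thus $qT$ is a nonzero prime, hence maximal by part (1); since $qT\subseteq P\subsetneq T$, we conclude $P=qT$ is principal (the zero ideal being trivially principal).

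For part (3) I would prove atomicity by an explicit factorization read off from the UFD $B[X]$. The atoms are easy to describe: every degree-one element of $T$ is an atom (a proper factor would be a nonzero constant, i.e.\ a unit), and for each irreducible $p\in B[X]$ with $p(0)\neq0$ the normalized polynomial $q=p(0)^{-1}p\in T$ is an atom, since a nontrivial $T$-factorization would give a nontrivial $B[X]$-factorization. Given a nonzero nonunit $f\in T$, factor it in $B[X]$ as $f=c\,X^{m}\prod_i p_i$ and normalize to $f=c'X^{m}\prod_i q_i$ with $c'\in B^{\ast}$ and $q_i(0)=1$. If $m=0$ then $c'=f(0)\in A^{\ast}$ is a unit and $f=c'\prod_i q_i$ is a product of atoms; if $m\geq1$ then $f=(c'X)\,X^{m-1}\prod_i q_i$, where $c'X\in XB[X]\subseteq T$ is a degree-one atom, again exhibiting $f$ as a finite product of atoms. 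Hence $T$ is atomic. The main obstacle in the whole argument is that $B/A$ need not be algebraic, so $B[X]$ is in general not integral over $T$ and the usual lying-over and going-up tools are unavailable; localization at $X$ together with the normalization trick is what replaces them.
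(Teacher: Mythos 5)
Your proof is correct and follows essentially the same route as the paper's: localization at the powers of $X$ to reach the PID $B[X,X^{-1}]$, the normalization $g\mapsto g(0)^{-1}g$ to produce elements of $T$ with constant term $1$, and descent of divisibility from $B[X]$ to $T$ by adjusting constant terms. The only organizational difference is in part (2), where you pull a normalized generator of $P^{e}$ back from the localization and then invoke part (1), whereas the paper shows that every irreducible of the form $1+Xf(X)$ is prime in $T$ and that $P$ contains such an element; both arguments hinge on the same divisibility-descent step, and yours is, if anything, slightly more careful about why $P\not\subseteq XB[X]$ and about factoring $aX^{r}$ in part (3).
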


\begin{proof}
	(1) \ \ 
	First note that $XB[X]$ is maximal since $T/XB[X]\cong K$. Let $P$ be a nonzero prime ideal of $T$. Now $X\in P$ implies $(XB[X])^2\subseteq P$ and hence $XB[X]\subseteq P$ so $P=XB[X]$. So suppose that $X\notin P$. Then for $N=\{1, X, X^2, \dots \}$, $P_N$ is a prime ideal in the PID $B[X,X^{-1}]=T_N$. (In fact, $T_P\supseteq B[X,X^{-1}]$ is a DVR (discrete valuation ring).) So $P$ is minimal and is also maximal unless $P\subsetneq XB[X]$. But let $k_nX^n+\dots + k_sX^s\in P$ with $k_n\neq 0$, where $k_n, \dots , k_s\in L$ for every $n, s$. Then $X^{n+1}+k_n ^{-1}k_{n+1}X^{n+2}+\dots +k_n ^{-1}k_sX^s\in P$, so $X\notin P$ implies that $1+k_n ^{-1}k_{n+1}X+\dots +k_n ^{-1}k_sX^{s-n}\in P$, a contradiction. So every nonzero prime ideal is maximal.
	
	\medskip
	
	\noindent
	(2) \ \ 
	If $P$ is different from $XB[X]$, then it contains an element of the form $1+Xf(X)$, where $f(X)\in B[X]$. Now if $1+Xf(X)$ can be factored in $B[X]$ it can be written as $(1+Xg(X))(1+Xh(X))$. Hence $1+Xf(X)$ is irreducible in $T$ if and only if it is irreducible in $B[X]$. 
	
	\medskip
	
	\noindent
	Now let $1+Xf(X)$ be irreducible in $T$ and suppose that $1+Xf(X)\mid h(X)k(X)$ in $T$. Then $1+Xf(X)\mid h(X)k(X)$ in $B[X]$, and so in $B[X]$ we have, say $1+Xf(X)\mid h(X)$. Then, in $B[X]$, $h(X)=(1+Xf(X))d(X)$. Now $d(X)$ can be written as $d(X)=aX^r(1+Xp(X))$. If $r>0, d(X)\in T$, while if $r=0, h(X)=(1+Xf(X))(a(1+Xp(X))$ and $a\in A$ because $h(X)\in T$. In either case, $d(X)\in T$ and so $1+Xf(X)\mid h(X)$ in $T$. Consequently, in $T$ every irreducible element of the type $1+Xf(X)$ is prime.
	
	\medskip
	
	\noindent
	Now since every element of the form $1+Xf(X)$ is a product of irreducible elements of the same form and hence is a product of prime elements, it follows that every prime ideal of $P$ different from $XB[X]$ contains a principal prime and hence is actually principal.
	
	\medskip
	
	\noindent
	(3) \ \ Thus a general element of $T=A+XB[X]$ can be written as $aX^r(1+Xf(X))$, where $a\in B$ (with $a\in A$ if $r=0$) and $1+Xf(X)$ is a product of primes.
\end{proof}

\begin{cor}
	\label{c2.3}
	Let $A$, $B$ be fields such that $A\subset B$. Consider $D=A+XB[X]$. Then 
	$\Irr D=\{aX, a\in B\}\cup
	\{a(1+Xf(X)),a\in A, f\in B[X], 1+Xf(X)\in\Irr B[X]\}.$
\end{cor}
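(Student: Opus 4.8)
The plan is to read off the corollary directly from the structural analysis carried out in Theorem \ref{t2.2}, so the first step is to collect the two facts I need. By Proposition \ref{p2.1}, since $A$ and $B$ are fields and hence have no nonzero nilpotents, the units of $T=A+XB[X]$ are exactly the nonzero constants, i.e. $T^{\ast}=A\setminus\{0\}$. Moreover, from the proof of Theorem \ref{t2.2}(3) every nonzero $g\in T$ admits a canonical form
\[
g=aX^{r}\bigl(1+Xh(X)\bigr),\qquad a\in B\ (\text{with }a\in A\text{ if }r=0),
\]
in which $1+Xh(X)$ is a product of primes of the shape $1+X(\cdots)$. I will use this form as the backbone of both inclusions.

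For the inclusion $\supseteq$ I would verify that each listed element is irreducible. If $a\in B\setminus\{0\}$ and $aX=gh$ in $T$, then comparing degrees in $B[X]$ forces one factor, say $g$, to be a nonzero constant; but constant terms of elements of $T$ lie in $A$, so $g\in A\setminus\{0\}=T^{\ast}$ and the factorization is trivial. Hence every $aX$ is irreducible. For an element $a(1+Xf(X))$ with $a\in A\setminus\{0\}$ and $1+Xf(X)\in\Irr B[X]$, Theorem \ref{t2.2}(2) shows that $1+Xf(X)$ is irreducible (indeed prime) in $T$ precisely because it is irreducible in $B[X]$; multiplying by the unit $a$ produces an associate, which is again irreducible.

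For the inclusion $\subseteq$ I would take an irreducible $q\in T$ and expand its canonical form $q=aX^{r}(1+Xh(X))$. Writing $X^{r}=X\cdots X$ and $1+Xh(X)=p_{1}\cdots p_{k}$ with each $p_{i}=1+X(\cdots)$ prime, I obtain $q$ as a product whose nonunit factors are $aX$ together with $r-1$ copies of $X$ and the $k$ primes $p_{i}$ (when $r\geq 1$), or $a\,p_{1}\cdots p_{k}$ with $a$ a unit (when $r=0$). Since $q$ is irreducible, exactly one nonunit factor may occur. In the case $r\geq 1$ this forces $r=1$ and $k=0$, so $q=aX$ with $a\in B\setminus\{0\}$; in the case $r=0$ it forces $k=1$, so $q=a\,p_{1}=a(1+Xg(X))$ with $a\in A\setminus\{0\}$ and, by Theorem \ref{t2.2}(2), $1+Xg(X)\in\Irr B[X]$. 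Either way $q$ lies in the claimed set.

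The only delicate point, and the step I expect to require the most care, is the bookkeeping in the canonical form: one must be sure that the decomposition of Theorem \ref{t2.2} separates cleanly into the $X$-type atoms and the $(1+X(\cdots))$-type primes, that no nonzero $a$ is absorbed into a nontrivial factorization, and that the unit group really is just $A\setminus\{0\}$, so that ``exactly one nonunit factor'' has the intended meaning. Once this is pinned down, both inclusions are immediate.
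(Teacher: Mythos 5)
Your proposal is correct and follows essentially the same route the paper intends: the corollary is meant to be read off from the proof of Theorem \ref{t2.2}, using the canonical form $aX^{r}(1+Xf(X))$, the fact that $T^{\ast}=A\setminus\{0\}$, and the equivalence ``$1+Xf(X)$ irreducible in $T$ iff irreducible in $B[X]$'' established there, which is exactly what you do. The only cosmetic remark is that the displayed sets should implicitly exclude $a=0$ (as your argument correctly does), a slip already present in the paper's statement rather than in your proof.
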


\begin{tw}
	\label{t2.4}
	Consider $T=A+XB[X]$, where $A$, $B$ are fields, $A\subset B$. Then
	$f\in\Irr T$ if and only if $f\in\Irr B[X], f(0)\in A$.
\end{tw}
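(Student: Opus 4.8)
The plan is to exploit the explicit description of the units of $T$ together with the fact that factorizations in $B[X]$ can be renormalized to land inside $T$. First I would record two preliminary observations. Since every $f\in T=A+XB[X]$ has constant term $f(0)\in A$ by definition, the condition $f(0)\in A$ in the statement is automatic once $f\in T$, so the genuine content is that irreducibility in $T$ and in $B[X]$ coincide for elements of $T$. Second, because $B$ is a field and hence has no nonzero nilpotents, Proposition~\ref{p2.1} yields $T^{\ast}=A^{\ast}$, i.e. the units of $T$ are exactly the nonzero constants lying in $A$; in particular any element of $T$ of degree $\geq 1$ is a nonunit.

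For the implication $(\Leftarrow)$, assume $f\in\Irr B[X]$ with $f(0)\in A$. Then $f\in T$ and $\deg f\geq 1$, so $f$ is a nonzero nonunit of $T$. If $f=gh$ in $T$, this is also a factorization in $B[X]$, and irreducibility there forces one factor, say $g$, to be a nonzero constant $b\in B$. But $g\in T$ means its constant term $b$ lies in $A$, so $b\in A^{\ast}=T^{\ast}$ is a unit of $T$. Hence $f$ admits no nontrivial $T$-factorization and $f\in\Irr T$.

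The implication $(\Rightarrow)$ is where the one real difficulty sits: given $f\in\Irr T$, a factorization $f=PQ$ in $B[X]$ need not have $P,Q\in T$, since their constant terms could lie in $B\setminus A$. The key step is a renormalization using $f(0)=P(0)Q(0)\in A$. When $f(0)\neq 0$, I would set $\widetilde P=P/P(0)$ and $\widetilde Q=Q/Q(0)$, both with constant term $1\in A$ and hence in $T$, and write $f=\bigl(f(0)\,\widetilde P\bigr)\widetilde Q$; since $f(0)\in A^{\ast}$ the factor $f(0)\widetilde P$ lies in $T$, and both factors keep degree $\geq 1$, so they are nonunits, contradicting $f\in\Irr T$. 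When $f(0)=0$, at least one constant term, say $P(0)$, vanishes, so $P\in T$; if also $Q(0)=0$ then $Q\in T$ and $f=PQ$ is already a $T$-factorization, while if $Q(0)=q\neq 0$ I write $f=(qP)(Q/q)$ with $qP,Q/q\in T$ (constant terms $0$ and $1$), again a nontrivial $T$-factorization into nonunits. Thus no nontrivial $B[X]$-factorization of $f$ exists, so $f\in\Irr B[X]$. I expect this renormalization, together with the bookkeeping of the $f(0)=0$ case, to be the main obstacle.

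Finally, I would note that the whole statement can alternatively be read off directly from Corollary~\ref{c2.3}: both families listed there, namely $aX$ with $a\in B$ and $a(1+Xf(X))$ with $a\in A$ and $1+Xf(X)\in\Irr B[X]$, are precisely the elements of $T$ that are irreducible in $B[X]$ and have constant term in $A$, so comparing the two descriptions yields the theorem immediately.
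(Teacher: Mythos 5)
Your proof is correct and takes essentially the same approach as the paper: both directions come down to rescaling the factors of a $B[X]$-factorization by their constant terms so that they land in $T$, using $T^{\ast}=A^{\ast}$. In fact your explicit handling of the $f(0)=0$ case is more careful than the paper's own argument, which renormalizes by dividing by $g(0)$ without checking that it is nonzero.
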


\begin{proof}
	Suppose that $f\notin\Irr B[X]$ or $f(0)\notin A$.
	If $f(0)\notin A$, then $f\notin T$, so $f\notin\Irr B[X]$.
	Now, assume that $f\notin\Irr B[X]$.
	Then $f=gh$, where $g, h\in B[x]\setminus B$.
	Let $g=a_0+a_1X+\dots +a^nX^n, h=b_0+b_1X+\dots + b_mX^m$.
	We have $f=(a_0+a_1X+\dots +a^nX^n)(b_0+b_1X+\dots + b_mX^m)$.
	Then
	$f=\big(1+\dfrac{a_1}{a_0}X+\dots + \dfrac{a_n}{a_0}X^n\big)
	(a_0b_0+a_0b_1X+\dots +a_0b_mX^m)$,
	where $a_0b_0=f(0)\in A$.
	Now, suppose that $f\notin\Irr T$.
	If $f\notin T$, then $f(0)\notin A$.
	Now, assume that $f\in T$.
	Then we have $f=gh$, where $g, h\in T\setminus A$.
	This implies $g, h\in B[x]\setminus B$.
\end{proof}

\begin{tw}
	\label{t2.5}
	Let $K$ and $L$ be fields such that $K\subset L$ and let $T=K+XL[X]$. Then $\Sqf (T)=(\Sqf (L[X]) \cap T)\cup \{X^2h; h\in\Sqf (L[X]), h(0)\notin\{a^2b; a\in L, b\in K\}\}$.
\end{tw}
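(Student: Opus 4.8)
The plan is to analyze square-freeness through the $X$-adic decomposition of an element together with the descriptions of units and irreducibles already established. Since $T$ is a domain with $T^{\ast}=K^{\ast}$ (Proposition \ref{p2.1}), and since by Theorem \ref{t2.4} an element of $T$ is irreducible exactly when it is irreducible in $L[X]$ and has constant term in $K$, I would first record the bridge between the two rings: if $g\in T$ is a nonunit of $T$ with $g^{2}\mid f$ in $T$, then $g$ is a nonunit of $L[X]$ and $g^{2}\mid f$ in $L[X]$; conversely, a relation $g^{2}\mid f$ holding in $L[X]$ descends to $T$ precisely when the cofactor $f/g^{2}$ again has constant term in $K$. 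This constant-term bookkeeping is the whole mechanism behind the theorem, so I would isolate it at the outset.

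Next I would write $f=X^{m}f_{1}$ with $f_{1}(0)\neq 0$ and split on $m$. For $m=0$ the constant term $f(0)=f_{1}(0)$ lies in $K^{\ast}$, every irreducible factor of $f$ in $L[X]$ has nonzero constant term and may be rescaled by a constant in $L^{\ast}$ to have constant term $1$, hence lands in $T$ and is irreducible there; factoring $f$ this way shows $f\in\Sqf(T)\iff f\in\Sqf(L[X])$, which contributes the piece $\Sqf(L[X])\cap T$. The same normalization argument covers $m=1$: the factor $X$ occurs only once, so it produces no square obstruction, and again $f\in\Sqf(T)\iff f_{1}\in\Sqf(L[X])\iff f\in\Sqf(L[X])$.

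The interesting regime is $m\geq 2$, where $X$ contributes a genuine square in $L[X]$. First I would dispose of $m\geq 3$: the cofactor $X^{m-2}f_{1}$ has constant term $0\in K$, so $X^{2}\mid f$ in $T$ and $f\notin\Sqf(T)$; such $f$ also lies in neither set on the right. For $m=2$, write $f=X^{2}h$ with $h=f_{1}$, $h(0)\neq 0$. If $h$ is not square-free in $L[X]$, a repeated irreducible factor of $h$ can be normalized to constant term $1$ and its square still divides $f$ in $T$ (the cofactor has constant term $0$), so $f\notin\Sqf(T)$; thus $h\in\Sqf(L[X])$ is forced. Granting this, the only remaining candidate squares dividing $f$ in $L[X]$ are the associates $(aX)^{2}=a^{2}X^{2}$ with $a\in L^{\ast}$, and such a square divides $f$ in $T$ iff $a^{-2}h\in T$, i.e. iff $h(0)\in a^{2}K$; ranging over $a\in L^{\ast}$ this says exactly $h(0)\in\{a^{2}b:a\in L,\ b\in K\}$. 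Hence $f=X^{2}h\in\Sqf(T)$ iff $h\in\Sqf(L[X])$ and $h(0)\notin\{a^{2}b:a\in L,\ b\in K\}$, which is the second set; this condition also forces $h(0)\neq 0$, so no degenerate case is missed.

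I expect the main obstacle to be the $m=2$ step, specifically the verification that beyond ordinary $L[X]$-square-freeness the only new obstruction is the power of $X$, together with the identification that this obstruction is governed precisely by the field-theoretic membership $h(0)\in a^{2}K$. The delicate point is that an element $aX$ with $a\in L\setminus K$ is a perfectly good nonunit of $T$ whose square can fail to divide $f$ in $T$ even though $a^{2}X^{2}$ divides $f$ in $L[X]$; pinning down exactly when the cofactor returns to $T$ is where the coefficient set $\{a^{2}b\}$ is born, and assembling the three ranges of $m$ into the stated union while checking that the two sets are disjoint and exhaustive is the part requiring care.
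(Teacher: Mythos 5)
Your proposal is correct and rests on the same mechanism as the paper's proof: normalize a square divisor $g$ by its constant term $c=g(0)$ (or reduce to $g=aX$ when $c=0$) and observe that the factorization descends to $T$ precisely when the cofactor's constant term lies in $K$, which is exactly where the set $\{a^2b;\ a\in L,\ b\in K\}$ arises. The only difference is organizational --- you run an exhaustive case analysis on $m=\operatorname{ord}_X(f)$, whereas the paper argues via the two inclusions $\Sqf(T)\setminus\Sqf(L[X])\subseteq\{X^2h;\dots\}$ and the reverse containments --- so the two arguments are essentially the same.
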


\begin{proof}
	Let $f\in\Sqf (T)\setminus\Sqf (L[X])$. There are some $g\in (L[X])\setminus (L[X]^{\ast})$ and $k\in L[X]$ such that $f=g^2k$. Set $c=g(0)$. If $c\neq 0$ then since $c^{-1}g$, $c^2k\in T$ and $f=(c^{-1}g)^2c^2k$, we have $c^{-1}g\in T^{\ast}$ and $g\in L[X]^{\ast}$, a contradiction. Therefore, $c=0$, and thus $f=X^2h$ for some $h\in L[X]$. Since $f\in\Sqf T$, we infer $h(0)=\neq 0$. If $h(0)=a^2b$ for some $a\in L$ and $b\in L$, then $aX$, $a^{-2}h\in T$, and $f=(aX)^2a^{-2}h$, which contradicts the fact that $f\in\Sqf T$. This implies that $h(0)\notin\{a^2b; a\in L, b\in K\}$. Let $r$, $s\in L[X]$ be such that $h=r^2s$. Since $h(0)\neq 0$, we infer $r(0)\neq 0$. Set $d=r(0)$. Then $f=(d^{-1}r)^2d^2sX^2$ and $d^{-1}r$, $d^2sX^2\in T$. Consequently, $d^{-1}r\in T^{\ast}$, and hence $r\in L[X]^{\ast}$. This shows that $h\in\Sqf L[X]$.
	
	\medskip
	
	Since $(L[X]^{\ast})\cap T=T^{\ast}$, it follows that $\Sqf (L[X]) \cap T\subset \Sqf T$. Now let $h\in\Sqf L[X]$ be such that $h(0)\notin\{a^2b; a\in L, b\in K\}$. It remains to show that $X^2h\in\Sqf T$. Clearly, $X^2h\in T$. Let $r$, $s\in T$ be such that $X^2h=r^2s$. Assume that $r\in XL[X]$. Then $r=Xt$ for some $t\in L[X]$, so $h=t^2s$. Therefore, $h(0)=t(0)^2s(0)\in\{a^2b; a\in L, b\in K\}$, a contradiction. Consequently, $r\notin XL[X]$, and thus $s=X^2w$ for some $w\in L[X]$. We infer $h=r^2w$, and hence $r\in (L[X]^{\ast})\cap T=T^{\ast}$.
\end{proof}

\begin{cor}
	\label{c2.6}
	Let $K$ and $L$ be fields such that $K\subset L$ and let $T=K+XL[X]$. Then $\Sqf T=\Sqf(L[X])\cap T$ iff $L=\{a^2b; a\in L, b\in K\}$. In particular, if $L$ is algebraically closed, then $\Sqf T=\Sqf(L[X])\cap T.$
\end{cor}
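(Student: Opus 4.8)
The plan is to read the result straight off Theorem \ref{t2.5}. Write $S=\{a^2b;\, a\in L, b\in K\}$ and let $E=\{X^2h;\, h\in\Sqf(L[X]),\, h(0)\notin S\}$ denote the second term of the union in Theorem \ref{t2.5}, so that $\Sqf T=(\Sqf(L[X])\cap T)\cup E$. First I would observe that every element of $E$ is divisible by $X^2$ and hence is \emph{not} square-free in $L[X]$; consequently $E\cap(\Sqf(L[X])\cap T)=\emptyset$, and the desired equality $\Sqf T=\Sqf(L[X])\cap T$ holds if and only if $E=\emptyset$. This reduces the corollary to deciding when the extra set $E$ vanishes.

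Next I would translate the condition $E=\emptyset$ into the stated field-theoretic statement. For the forward implication, if $L=S$ then $h(0)\in L=S$ for every $h\in L[X]$, so no $h$ can satisfy $h(0)\notin S$, whence $E=\emptyset$. For the converse I would argue by contraposition: assuming $L\neq S$, there is some $c\in L\setminus S$, and $c\neq 0$ since $0=0^2\cdot 1\in S$. The linear polynomial $h=X+c$ is irreducible, so $h\in\Sqf(L[X])$, while $h(0)=c\notin S$; hence $X^2h\in E\subseteq\Sqf T$. But $X^2h$ is divisible by $X^2$ and so is not square-free in $L[X]$, which gives $\Sqf T\neq\Sqf(L[X])\cap T$. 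Combining the two directions yields the equivalence.

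Finally, for the ``in particular'' clause I would note that when $L$ is algebraically closed every $c\in L$ is a square, say $c=a^2$ with $a\in L$, so $c=a^2\cdot 1\in S$ because $1\in K$. Thus $L\subseteq S$, and since the reverse inclusion $S\subseteq L$ is automatic from $K\subseteq L$, we get $L=S$; the claimed equality then follows from the equivalence just proved. I do not expect a genuine obstacle here, as the whole content sits inside Theorem \ref{t2.5}; the only points needing a little care are checking that the witness $X^2h$ really lies in $T$ (its constant term is $0\in K$) while failing to be square-free in $L[X]$, and correctly recording that $0\in S$ so that the element $c$ chosen in the converse can be taken nonzero.
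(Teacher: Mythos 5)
Your proof is correct and follows essentially the same route as the paper: both directions are read directly off Theorem \ref{t2.5}, with a witness of the form $X^2h$, where $h(0)=c\notin\{a^2b;\,a\in L,\,b\in K\}$, supplying the converse. Your choice $h=X+c$ is in fact slightly cleaner than the paper's (which takes the constant $c$ itself as the square-free factor, tacitly treating units of $L[X]$ as square-free), but the argument is the same.
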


\begin{proof}
	It follows from Proposition \ref{t2.3} that if $L=\{a^2b; a\in L, b\in K\}$, then $\Sqf T=\Sqf(L[X])\cap T$. Now let $L\neq\{a^2b; a\in L, b\in K\}$. There is some $c\in L\setminus \{a^2b; a\in L, b\in K\}$. By Proposition \ref{t2.3}, we have $X^2x\notin\Sqf(L[X])$, and thus $\Sqf T\neq\Sqf(L[X])\cap T$. Finally, if $L$ is algebraically closed, then $L=\{a^2; a\in L\}$ and the statement follows.
\end{proof}

\begin{ex}
	\label{e2.7}
	Let $L$ be a field with char $(L)=2$ such that $L$ is not perfect, let $K$ be the prime subfield of $L$ and $T=K+XL[X]$. Then $\Sqf T\neq \Sqf(L[X])\cap T$.
\end{ex}

\begin{proof}
	Since char $(L)=2$ and $L$ is not perfect, we have $L\neq\{a^2; a\in L\}$. Since $K=\{0,1\}$, this implies that $L\neq\{a^2b; a\in L, b\in K\}$. It is an immediate consequence of Corollary \ref{c2.4} that $\Sqf T\neq \Sqf(L[X])\cap T$.
\end{proof}

In particular, if $T=\mathbb{R}+X\mathbb{C}[X]$, then $\Irr T=\{a+bX; a\in\mathbb{R}, b\in\mathbb{C}\setminus\{0\}\}$ and $\Sqf T=\{a\prod_{b\in B} (1+bX); a\in\mathbb{R}\setminus\{0\}, B\subset\mathbb{C}, B \text{ is finite}\}\cup \{aX\prod_{b\in B} (1+bX); a\in\mathbb{C}\setminus\{0\}, B\subset\mathbb{C}, B \text{ is finite}\}$.

If $L$ and $K$ are finite fields and it is a proper extension, then $K+XL[X]$ is a non-factorial ACCP domain (see \cite{0}, \cite{x9}).

Recall that a ring $R$ satisfies ACCP if each chain of principal ideals of $R$ is stabilize.

\begin{ex}
	\label{e2.8}
	In \cite{1} Example 5.1 showed an example of an integral domain $R$ which satisfies ACCP, but whose integral closure does not satisfy ACCP. It mean $R=\mathbb{Z}+X\overline{\mathbb{Z}}[X]$, where $\overline{\mathbb{Z}}$ be the ring of all algebraic integers. $R$ satisfies ACCP. For if not, then there is an infinite properly ascending chain of pricipal ideals of $R$. Since the degrees of the polynomials generating these principal ideals are nonincreasing, the degrees eventually stabilize. The principal ideals in $\overline{\mathbb{Z}}$ generated by the leading coefficients of these polynomials gives an infinite ascending chain $a_1\overline{\mathbb{Z}}\subsetneq a_2\overline{\mathbb{Z}}\subsetneq ...$ where each $a_n/a_{n+1}\in\mathbb{Z}$. Thus all $a_n\in\mathbb{Q}[a_1]$. Let $A=\overline{\mathbb{Z}}\cap\mathbb{Q}[a_1]$. Then $a_1A\subsetneq a_2A\subsetneq\dots \subsetneq A$, a contradiction since $A$ is Dedekind.
\end{ex}

\begin{ex}
	\label{e2.9}
	Let $R=\mathbb{R}+X\mathbb{C}[X]$. So $R$ is a HFD, so has ACCP, then atomic.
\end{ex}

\section{ACCP, BFD, HFD, idf, FFD, S, Hilbert properties}
\label{R3}

Note that for $R$ a ring between $A[X]$ and $B[X]$ ($A$, $B$ are fields such that $A\subset B$), $R$ has ACCP if and only if for every $n\geqslant 0$, any ascending chain of principal ideals generated by polynomials of degree $n$ terminates. If $B$ be a quotient field of $A$, the Proposition 5.2 \cite{1} may be used to show that ring $R$ satisfies ACCP. But we can minimalize assumptions by composites.

\begin{pr}
	\label{p3.1}
	Let $A$, $B$ are fields such that $A\subset B$. Let $R$ be a ring with $A[X]\subseteq R\subseteq B[X]$. Then $R$ has ACCP if and only if $R\cap B$ has ACCP and for each ascending chain of polynomials $f_1R\subseteq f_2R\subseteq f_3R\subseteq\dots $ where $f_i\in R$ all have the same degree, then there is $d\in (R\cap B)\setminus\{0\}$ such that $df_i\in (R\cap B)[X]$. 
\end{pr}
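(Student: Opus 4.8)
Write $C=R\cap B$ for the ring of constants of $R$; since $A[X]\subseteq R\subseteq B[X]$ we have $A\subseteq C\subseteq B$, so $C$ is a domain whose fraction field sits inside $B$. The starting point is the reduction recorded just before the statement: $R$ has ACCP if and only if, for every fixed $n\geq 0$, every ascending chain of principal ideals of $R$ whose generators all have degree $n$ terminates. Indeed, in any chain $g_1R\subseteq g_2R\subseteq\cdots$ the relation $g_{i+1}\mid g_i$ forces the degrees $\deg g_i$ to be nonincreasing, hence eventually constant, so it suffices to control equal-degree chains, which is exactly the regime addressed by the two conditions on the right-hand side.

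The observation used in both directions is that if $f_iR\subseteq f_{i+1}R$ with $\deg f_i=\deg f_{i+1}$, then $f_i=f_{i+1}u_i$ with $u_i\in R$ of degree $0$, i.e. $u_i\in C$. Hence the leading coefficients satisfy $\ell_i=\ell_{i+1}u_i$, so $\ell_i\in\ell_{i+1}C$, and we obtain an ascending chain of cyclic $C$-submodules $\ell_1C\subseteq\ell_2C\subseteq\cdots$ inside $B$. The point is that this is a priori a chain of $C$-submodules of $B$, not of ideals of $C$ (the $\ell_i$ need not lie in $C$), so ACCP of $C$ cannot be applied to it directly; closing this gap is precisely what the clearing condition is for.

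For the implication from the two conditions to ACCP, I would fix an equal-degree chain and use the second condition to produce $d\in C\setminus\{0\}$ with $df_i\in C[X]$ for all $i$. Then each $d\ell_i$, being the leading coefficient of $df_i\in C[X]$, lies in $C$, so $d\ell_1C\subseteq d\ell_2C\subseteq\cdots$ is an honest ascending chain of principal ideals of $C$. By ACCP of $C$ it stabilizes from some index $N$ on; dividing by the unit $d$ of $B$ gives $\ell_iC=\ell_{i+1}C$ for $i\geq N$, forcing $u_i\in C^{\ast}\subseteq R^{\ast}$, whence $f_iR=f_{i+1}R$. Combined with the degree reduction, $R$ has ACCP. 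For the converse, ACCP of $C$ is immediate: a chain $c_1C\subseteq c_2C\subseteq\cdots$ of principal ideals of $C$ lifts to $c_1R\subseteq c_2R\subseteq\cdots$ (the $c_i$ being constants), which stabilizes by ACCP, and for constants being associate in $R$ coincides with being associate in $C$.

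I expect the genuine obstacle to be the necessity of the clearing condition. ACCP makes every equal-degree chain stabilize, so up to the finitely many steps before stabilization each $f_i$ is a $C$-multiple of a fixed generator $f_N$; the condition then reduces to clearing the coefficients of the single polynomial $f_N$ into $C[X]$ by a constant of $C$. This is not automatic when $B$ is strictly larger than the fraction field of $C$: for $R=\mathbb{R}+X\mathbb{C}[X]$, where $C=\mathbb{R}$, the generator $1+iX$ admits no such $d$, even though $R$ has ACCP. The honestly necessary statement is the weaker one that the leading-coefficient chain $\ell_1C\subseteq\ell_2C\subseteq\cdots$ in $B$ stabilizes, which does follow from ACCP and still drives the forward argument above. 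I would therefore either weaken the second condition to this leading-coefficient form, or impose that $B$ be the fraction field of $C$, under which the clearing step goes through and the equivalence holds as written.
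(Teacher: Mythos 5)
Your proof of the implication from the two conditions to ACCP is correct and takes a slightly different (and more elementary) route than the paper's: you stabilize the leading-coefficient chain $d\ell_1C\subseteq d\ell_2C\subseteq\cdots$ directly inside $C=R\cap B$ and conclude that the degree-zero multipliers $u_i$ are units of $C$, whereas the paper passes to the chain $af_i(R\cap B)[X]$ and invokes the ascent of ACCP from $R\cap B$ to $(R\cap B)[X]$. Both arguments land in the same place; yours avoids the polynomial-ring lemma. Your treatment of ``$R$ has ACCP implies $C$ has ACCP'' coincides with the paper's one-line argument via $(R\cap B)^{\ast}=R^{\ast}\cap B$.

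More importantly, your objection to the necessity of the clearing condition is correct, and it pinpoints a genuine error in the paper's proof of the forward direction. The paper asserts ``Since $f_n\in B[X]$, there exists a $0\neq a\in A\subseteq R\cap B$ with $af_n\in A[X]$,'' which is false in the stated setting: $A$ is a field, so multiplying by a nonzero $a\in A$ cannot move a coefficient lying in $B\setminus A$ into $A$. Your counterexample is exactly right and is even supplied by the paper itself: by Example \ref{e2.9} the ring $R=\mathbb{R}+X\mathbb{C}[X]$ has ACCP, yet the constant equal-degree chain generated by $1+\sqrt{-1}\,X$ admits no $d\in\mathbb{R}\setminus\{0\}$ with $d(1+\sqrt{-1}\,X)\in\mathbb{R}[X]$, so the second condition fails and the proposition is false as written. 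The statement is a transcription of Proposition 5.2 of \cite{1}, where the ambient field is the quotient field of the coefficient domain and denominators can legitimately be cleared; replacing that hypothesis by ``$A\subset B$ fields'' breaks precisely this step. Either of your proposed repairs --- weakening the second condition to stabilization of the chain $\ell_1C\subseteq\ell_2C\subseteq\cdots$ of leading-coefficient modules, or requiring $B$ to be the fraction field of $R\cap B$ --- restores a correct equivalence, and your argument then goes through.
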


\begin{proof}
	($\Rightarrow$) 
	Since $(R\cap B)^{\ast}=R^{\ast}\cap B$, $R$ has ACCP implies $R\cap B$ has ACCP. The chain $f_1R\subseteq f_2R\subseteq\dots$ be stationary, say $f_nR=f_{n+1}R=\dots$. So $f_{n+1}=u_if_i$, where $u_i$ is a unit of $R\cap B$. Since $f_n\in B[X]$, there exists a $0\neq a\in A\subseteq R\cap B$ with $af_n\in A[X]\subseteq R$. But then coefficients of $af_{n+1}=u_idf_n$ all lie in $R\cap B$.
	
	\medskip
	
	\noindent
	($\Leftarrow$)
	Let $f_1R\subseteq f_2R\subseteq\dots$ be an ascending chain in $R$. Since $\deg f_{i+1}\leqslant \deg f_i$, eventually $f_i$ have the same degree, so without loss of generality, we can assume that $\deg f_1=\deg f_2=\dots $. By hypothesis there exists $0\neq a\in R\cap B$ with $af_i\in(R\cap B)[X]$. Now $f_iR\subseteq f_{i+1}R$ implies $f_i=f_{i+1}b$, where $b\in R$ has degree $0$, so $b\in R\cap B$. Hence $af_i(R\cap B)[X]\subseteq af_{i+1}(R\cap B)[X]$. But $R\cap B$ has ACCP and hence so does $(R\cap B)[X]$. So for large $n$, $f_n(R\cap B)[X]=f_{n+1}(R\cap B)[X]=\dots$, and hence $f_nR=f_{n+1}R=\dots$. 
\end{proof}

A Theorem \ref{t3.2} shows that between $A[X]$ and $A+XB[X]$ we can find a structure which satysfying ACCP condition. 

\begin{tw}
	\label{t3.2}
	Let $A$, $B$ are fields such that $A\subset B$. Let $C$ be a domain such that $A[X]\subseteq C\subseteq A+XB[X]$. Suppose that for each $n\geq 0$, there exists $a_n\in A\setminus\{0\}$ so that $a_nf\in A[X]$ for all $f\in C$ with $\deg f\leq n$. Then $C$ has ACCP if and only if $A$ has ACCP.
\end{tw}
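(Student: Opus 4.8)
The plan is to reduce everything to Proposition~\ref{p3.1}, applied to the ring $R = C$, which is legitimate because $A[X] \subseteq C \subseteq A + XB[X] \subseteq B[X]$. The first thing I would pin down is the identity $C \cap B = A$. Indeed, $A \subseteq A[X] \subseteq C$, so $A \subseteq C \cap B$; conversely, any $c \in C \cap B$ is a constant polynomial lying in $C \subseteq A + XB[X]$, and every element of $A + XB[X]$ has its constant term in $A$, whence $c = c(0) \in A$. Therefore $C \cap B = A$, which in particular is a field.

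For the forward implication, I would simply observe that since $A$ is a field it automatically satisfies ACCP (its only ideals are $0$ and $A$), so ``$A$ has ACCP'' is true with nothing to prove on this side. If one prefers to mirror the argument in Proposition~\ref{p3.1}, the same conclusion follows from $(C \cap B)^{\ast} = C^{\ast} \cap B$ together with $C \cap B = A$.

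The substance lies in the reverse implication, for which I would verify the two conditions supplied by Proposition~\ref{p3.1}. The first condition, that $C \cap B$ has ACCP, holds because $C \cap B = A$ is a field. For the second, take an ascending chain $f_1 C \subseteq f_2 C \subseteq \dots$ in which all the $f_i$ share a common degree, say $\deg f_i = n$. By the standing hypothesis there is $a_n \in A \setminus \{0\}$ with $a_n f \in A[X]$ for every $f \in C$ of degree at most $n$; in particular $a_n f_i \in A[X] = (C \cap B)[X]$ for all $i$. Thus $d = a_n$ is exactly the single denominator demanded by Proposition~\ref{p3.1}, and that proposition then yields that $C$ has ACCP.

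The only genuine step is the translation between the hypothesis of the theorem and the second clause of Proposition~\ref{p3.1}: one must notice that a chain of fixed common degree $n$ is cleared by the single scalar $a_n$, so the uniformity of the denominator across the chain is automatic and no uniformity across different degrees is ever needed. I expect this matching to be the main (and essentially the only) obstacle; the apparent ``if and only if'' collapses on the $A$-side because $A$ is a field, so the whole content reduces to producing the uniform denominator $d = a_n$, after which Proposition~\ref{p3.1} closes the argument.
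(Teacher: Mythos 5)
Your proof is correct and takes essentially the same route as the paper, whose entire argument is the one--line claim that the theorem is a special case of Proposition~\ref{p3.1} (miscited there as Proposition~\ref{p2.1}, and followed by extraneous BFD material); you supply exactly the details that reduction needs, namely the identification $C\cap B=A$ and the observation that the hypothesized $a_n$ serves as the uniform denominator $d$ for a chain of fixed degree $n$. Your remark that the ``only if'' direction is vacuous because $A$ is a field is also accurate.
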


\begin{proof}
	This is a special case of Proposition \ref{p2.1}. The second part of this proof: let us recall $R$ a bounded factorization domain (BFD) if for each nonzero nonunit $a\in R$, there exists a positive integer $N(a)$, so that if $a=a_1\dots a_s$ where each $a_i$ is nonunit, then $s\leqslant N(a)$. It is very known fact that a BFD has ACCP but the converse is false. In the proof suppose that $A$ is a BFD. Let $0\neq f\in C$ have a degree $n$ and leading coefficient $b$. Write $f=g_1\dots g_sg_{s+1}\dots g_m$, where $g_1, \dots , g_m\in C$ are nonunits with $g_1, \dots , g_s\in A$ and $g_{s+1}, \dots , g_m\in C$ have a degree $\geqslant 1$. Now $g_{s+1}\dots g_m$ has a degree $n$, so $m-s\leqslant n$. Also, $r_ng_{s+1}\dots g_m\in A[X]$, say it has leading coefficient $c\in A$. Then $r_nb=g_q\dots g_sc$. But $A$ is a BFD, so there is a bound on the number of factors for $r_nb$ and hence on $s$. Thus the $m$ if $f=g_1\dots g_sg_{s+1}\dots g_m$ has an upper bound. Conversely, if $C$ be a BFD, easy to see that $A$ be a BFD without any additional hypothesis on $C$. 
\end{proof}

\medskip

\begin{pr}
	\label{p3.3}
	Let $A$ be a domain. Then $A$ has ACCP if and only if $A[X]$ has ACCP.
\end{pr}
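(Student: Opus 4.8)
The plan is to prove both implications directly from the definition of ACCP, with the forward direction carrying the real content. For the easy direction ($\Leftarrow$), suppose $A[X]$ has ACCP and take an ascending chain $a_1A\subseteq a_2A\subseteq\cdots$ of principal ideals in $A$. Extending to $A[X]$ gives $a_1A[X]\subseteq a_2A[X]\subseteq\cdots$, which stabilizes by hypothesis. To descend back to $A$, I would use that the units of $A[X]$ coincide with $A^{\ast}$ because $A$ is a domain: if $a_iA[X]=a_{i+1}A[X]$, then $a_i$ and $a_{i+1}$ are associates in $A[X]$ via a unit lying in $A$, whence $a_iA=a_{i+1}A$. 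Thus the original chain stabilizes.

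For the forward direction ($\Rightarrow$), assume $A$ has ACCP and let $f_1A[X]\subseteq f_2A[X]\subseteq\cdots$ be an ascending chain in $A[X]$. Writing $f_i=f_{i+1}g_i$ with $g_i\in A[X]$ and comparing degrees — here I use that $A$ is a domain, so $\deg f_i=\deg f_{i+1}+\deg g_i$ — shows that $\deg f_1\geq\deg f_2\geq\cdots$ is a nonincreasing sequence of nonnegative integers, hence eventually constant. After discarding finitely many terms I may assume all $f_i$ share a common degree $n$, forcing each $g_i$ to be a constant in $A$.

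The crux is then to transfer the problem to the leading coefficients. Letting $a_i$ denote the leading coefficient of $f_i$, the relation $f_i=f_{i+1}g_i$ with $g_i\in A$ yields $a_i=g_ia_{i+1}$, so $a_1A\subseteq a_2A\subseteq\cdots$ is an ascending chain of principal ideals in $A$. Since $A$ has ACCP, this chain stabilizes, say from index $N$ onward. For $i\geq N$ the equality $a_iA=a_{i+1}A$ together with $a_i=g_ia_{i+1}$ and $a_{i+1}\neq 0$ forces $g_i$ to be a unit of $A$ by cancellation, and hence $f_iA[X]=f_{i+1}A[X]$. Therefore the chain in $A[X]$ stabilizes.

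The only genuine obstacle is the bookkeeping with degrees and leading coefficients; the domain hypothesis is precisely what makes both the additivity of degree and the final cancellation step valid, so no deeper machinery is required.
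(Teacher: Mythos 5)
Your argument is correct and is precisely the degree-comparison argument that the paper's one-line proof alludes to (``can be easily proved by comparing degrees''), only written out in full: you reduce an ascending chain in $A[X]$ to a chain of leading coefficients in $A$ and use cancellation in the domain to conclude. The minor implicit step---that the chain may be assumed to consist of nonzero polynomials---is harmless, so nothing further is needed.
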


\begin{proof}
	The equivalene can be easily proved by comparing degrees (resp., orders).
\end{proof}

It also turn out that ACCP property moves between $A$ and $A+XB[X]$. This is important because we do not have to choose a general polynomial, and we can limit the inclusion to the smallest composite needed. Such a significant limitation of a polynomial to a composite is important, e.g. in Galois theory in commutative rings.

\begin{tw}
	\label{t3.4}
	Let $A$ be an integral domain with quotient field $B$. An $A$ has ACCP if and only if $A+XB[X]$ has ACCP.
\end{tw}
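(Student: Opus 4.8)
The plan is to reduce everything to Proposition \ref{p3.1}. Writing $T=A+XB[X]$, observe that $A[X]\subseteq T\subseteq B[X]$ and that $T\cap B=A$, since a constant polynomial lies in $T$ exactly when its value lies in $A$. Hence Proposition \ref{p3.1} applies with $R=T$ and $R\cap B=A$, and it reduces the claim to two assertions: that $A$ has ACCP, and that for every ascending chain $f_1T\subseteq f_2T\subseteq\cdots$ of principal ideals generated by polynomials $f_i\in T$ of a common degree $n$ there is a single $d\in A\setminus\{0\}$ with $df_i\in A[X]$ for all $i$. The forward implication is then immediate: if $T$ has ACCP, Proposition \ref{p3.1} forces $R\cap B=A$ to have ACCP.

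So the whole content lies in the converse, and I would spend all effort verifying the denominator-clearing condition under the assumption that $A$ has ACCP. First I would record the structural fact that, because $\deg f_i=\deg f_{i+1}=n$ and $T$ is a domain inside $B[X]$, each relation $f_i=f_{i+1}g_i$ forces $g_i$ to have degree $0$; since $g_i\in T$ this means $g_i\in A$. Telescoping gives $f_1=c_if_i$ with $c_i=g_1\cdots g_{i-1}\in A$ (and $c_1=1$), so every coefficient of $f_i$ is the corresponding coefficient of $f_1$ divided by $c_i$.

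When the constant term $a_0$ of $f_1$ is nonzero the argument closes cleanly. The constant terms $a_0/c_i$ all lie in $A$ because each $f_i\in T$, and the relation $a_0/c_i=g_i\,(a_0/c_{i+1})$ produces an ascending chain $(a_0/c_1)A\subseteq(a_0/c_2)A\subseteq\cdots$ of ideals of $A$. By ACCP in $A$ this chain stabilizes, so $g_i$ is a unit of $A$ for all large $i$; hence the chain $f_1T\subseteq f_2T\subseteq\cdots$ itself stabilizes, and only the finitely many coefficients of $f_1,\dots,f_N$ have to be cleared, which a single $d\in A\setminus\{0\}$ accomplishes.

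The hard part, and the step I expect to be the real obstacle, is the case where $f_1$ is divisible by $X$, i.e.\ has order $r\geq 1$. Then the lowest nonzero coefficient $a_r$ of $f_1$ lies only in $B$, not in $A$, so the scalars $a_r/c_i$ now trace out an ascending chain $(a_r/c_1)A\subseteq(a_r/c_2)A\subseteq\cdots$ of cyclic $A$-submodules of $B$ rather than of ideals of $A$, and ACCP in $A$ does not control such chains: already for $A=\mathbb{Z}$, $B=\mathbb{Q}$ the submodules $2^{-i}\mathbb{Z}$ strictly ascend forever. Correspondingly the chain $f_i=2^{-i}X$ in $\mathbb{Z}+X\mathbb{Q}[X]$ never stabilizes and admits no common denominator $d$. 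This suggests that the denominator-clearing route breaks in this generality, and I would expect that closing the case requires either restricting attention to polynomials prime to $X$ or imposing an extra hypothesis (for instance that $A$ be a field, or a condition on $S=A\setminus\{0\}$ relative to the valuation primes of $A$ in the spirit of \cite{y3}) strong enough to forbid these runaway chains of fractional principal submodules.
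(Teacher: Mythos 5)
Your analysis is correct, and what it uncovers is not a gap in your own argument but a genuine problem with the statement: the implication from ``$A$ has ACCP'' to ``$A+XB[X]$ has ACCP'' is false whenever $A$ is not a field, so no proof of Theorem \ref{t3.4} as stated can be completed. Your witness is exactly the standard one: in $T=\mathbb{Z}+X\mathbb{Q}[X]$ the chain $XT\subseteq \tfrac{1}{2}XT\subseteq\tfrac{1}{4}XT\subseteq\cdots$ is strictly ascending, since each step multiplies by $2\in T$ while the reverse inclusion would require a degree-zero factor equal to $\tfrac{1}{2}$, which is not in $T\cap\mathbb{Q}=\mathbb{Z}$; yet $\mathbb{Z}$ has ACCP. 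More generally, because $B$ is the quotient field of $A$, any nonzero nonunit $d\in A$ gives $X=d\cdot(X/d)$ with $X/d\in XB[X]\subseteq T$, which is precisely the observation opening the paper's own proof of Proposition \ref{p3.5}: $D+XL[X]$ cannot satisfy ACCP (or even be atomic) unless $D$ is a field. Theorem \ref{t3.4} is therefore internally inconsistent with Proposition \ref{p3.5}, and the only part that survives is the easy direction you dispatched first, namely that $T$ having ACCP forces $A=T\cap B$ to have ACCP (via $T^{\ast}=A^{\ast}$).

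Your diagnosis of where the denominator-clearing route breaks is also exactly where the paper's own proof breaks. That proof tries to invoke Theorem \ref{t3.2} by asserting that for each $n\geq 0$ there is a single $a_n\in A\setminus\{0\}$ clearing the denominators of \emph{all} $f\in A+XB[X]$ of degree at most $n$; this uniform-denominator condition is precisely what fails (no single nonzero integer $a_1$ satisfies $a_1q\in\mathbb{Z}$ for every $q\in\mathbb{Q}$), and in any case Theorem \ref{t3.2} is stated only for $A$ a field. Your positive partial result --- stabilization of any chain whose generators have nonzero constant term, by passing to the chain of constant terms in $A$ --- is correct and is the honest content that remains; a true equivalence requires either that $A$ be a field (in which case $B=A$ and the statement is vacuous) or that $B$ be replaced by an overring in which the nonunits of $A$ do not all become invertible, as in Example \ref{e2.8}.
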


\begin{proof}
	From Theorem \ref{t3.2} we have $A[X]\subseteq A+XB[X]\subseteq A+XK[X]$, where $K$ be an overfield of $B$. We can to prove that for each $n\geq 0$, there exists $a_n\in A\setminus\{0\}$ for all $f\in A+XB[X]$ with $\deg f\leq n$. Because $A$ has ACCP then from Theorem \ref{t3.2} we get $A+XB[X]$ has ACCP. Conversely, because $A+XB[X]$ has ACCP then $A$ has ACCP.	
\end{proof}

In papers \cite{Matysiak1} and \cite{Matysiak2}, polynomial composites with the property of atomicity and ACCP are presented. The results below are complementary.

\begin{pr}
	\label{p3.5}
	Let $T=K+XL[X]$, where $K$, $L$ are fields with $K\subset L$. Let $D$ be a subring of $K$ and $R=D+XL[X]$. Then:
	\begin{itemize}
		\item[(a) ] $R$ is atomic if and only if $T$ is atomic and $D$ is a field.
		\item[(b) ] $R$ satisfies ACCP if and only if $T$ satisfies ACCP and $D$ is a field.
	\end{itemize}
\end{pr}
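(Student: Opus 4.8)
The plan is to reduce both equivalences to the single structural fact of whether $D$ is a field, while observing that the conditions imposed on $T$ are automatic. First I would record the units. By Proposition \ref{p2.1} applied over $L[X]$ (a domain, hence with no nonzero nilpotents), the invertible elements of $R=D+XL[X]$ are exactly the elements of $D$ invertible in $D$, i.e. $R^{\ast}=D^{\ast}$; likewise $T^{\ast}=K\setminus\{0\}$ since $K$ is a field. The second preliminary is a degree/stabilization argument: if $f_1S\subseteq f_2S\subseteq\cdots$ is an ascending chain of principal ideals in any domain $S$ with $S\subseteq L[X]$, then $f_i=f_{i+1}g_i$ forces $\deg f_i\geq\deg f_{i+1}$, so the degrees are nonincreasing and eventually constant; once the degrees agree, each cofactor $g_i$ has degree $0$.

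Applying this to $T$, the stabilized cofactors lie in $K\setminus\{0\}=T^{\ast}$, so the chain stabilizes; hence $T$ satisfies ACCP unconditionally and, being a domain, is therefore atomic, consistent with Theorem \ref{t2.2}. The same computation applied to $R$ shows that \emph{if $D$ is a field}, then every stabilized cofactor lies in $D\setminus\{0\}=D^{\ast}=R^{\ast}$, so $R$ satisfies ACCP and hence is atomic (this also follows directly from Theorem \ref{t2.2}, since then $D\subset L$ are fields). This disposes of the ``$\Leftarrow$'' directions of both (a) and (b) and shows the clauses about $T$ hold automatically.

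The only genuine content is the forward implication forcing $D$ to be a field, which I would prove by contraposition. Assume $D$ is not a field and fix a nonzero nonunit $d\in D$. For ACCP, I exhibit the chain $XR\subseteq d^{-1}XR\subseteq d^{-2}XR\subseteq\cdots$: each inclusion holds because $d^{-(n-1)}X=(d^{-n}X)\cdot d$ with $d\in R$, and it is strict because $d^{-1}\notin R$ (otherwise $d^{-1}\in D^{\ast}=R^{\ast}$, contradicting that $d$ is a nonunit), so $R$ fails ACCP. For atomicity, I show $X$ has no factorization into atoms: in any factorization $X=p_1\cdots p_k$ exactly one factor has degree $1$, and comparing constant terms in the domain $L[X]$ forces that factor to be of the form $aX$ with $a\in L\setminus\{0\}$; but $aX=d\cdot(ad^{-1}X)$ is a proper factorization in $R$, since $d\notin R^{\ast}$ and $ad^{-1}X$ has degree $1$, so no such $aX$ is irreducible and $X$ admits no atomic factorization. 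Hence $R$ is not atomic.

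Finally I would assemble the equivalences. Part (a): $R$ atomic implies $D$ is a field (contrapositive above), and conversely $D$ a field implies $R$ atomic; combined with ``$T$ atomic'' always holding, this gives (a). Part (b) is identical with ``ACCP'' replacing ``atomic''; alternatively, since ACCP implies atomicity in a domain, the ACCP hypothesis already yields that $D$ is a field through (a). The main obstacle is this forward direction, and within it the two explicit witnesses, the strictly ascending chain $d^{-n}XR$ and the non-factorable element $X$, are the crux; the points that need care are the strictness of the chain inclusions and the claim that the degree-$1$ factor must take the form $aX$.
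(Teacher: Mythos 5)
Your proof is correct, but it takes a different route from the paper's in the ``only if $D$ is a field'' half of the equivalences. The paper proves a transfer principle: assuming $D$ is a field, it sets up a correspondence between irreducibles of $R$ and of $T$ (every element is, up to units, of the form $f$ or $1+f$ with $f\in XL[X]$) and between chains of principal ideals of equal length, so that $R$ is atomic (resp.\ satisfies ACCP) exactly when $T$ is. You instead observe that the hypotheses on $T$ are vacuous --- $T=K+XL[X]$ with $K\subset L$ fields always satisfies ACCP by the degree-stabilization argument (the stabilized cofactors are nonzero constants, hence lie in $K^{\ast}=T^{\ast}$), hence is atomic --- and that the same argument applies verbatim to $R$ once $D$ is a field, so the proposition collapses to ``$R$ atomic (resp.\ ACCP) iff $D$ is a field.'' For the forward direction both proofs use the same idea (a nonzero nonunit $d\in D$ divides every element of $XL[X]$, killing irreducibility of $aX$ and producing the strictly ascending chain $d^{-n}XR$), though your version is spelled out more carefully than the paper's rather garbled ``$f=d\frac{m}{f}$.'' What the paper's correspondence buys is robustness: it would still transfer atomicity and ACCP between $D+XL[X]$ and $K+XL[X]$ in settings where these properties are not automatic (e.g.\ with $L$ a more general domain), whereas your shortcut exploits the fact that for fields $K\subset L$ the $T$-side conditions always hold; what your version buys is a self-contained, fully explicit argument that also makes clear the statement is slightly degenerate as formulated.
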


\begin{proof}
	First suppose that $D$ is not a field. Then $f=d\dfrac{m}{f}$ for each $f\in XL[X]$ and $d\in D^{\ast}$. Thus no element of $XL[X]$ is irreducible ($XL[X]$ is a maximal ideal of $T$). Hence if $R$ is either atomic or satisfies ACCP, $D$ must be a field. So let $D$ be a field.
	
	\medskip
	
	\noindent
	(a) \ \ Up to multiplication by a $\alpha\in K^{\ast}$ (resp. $\alpha\in D^{\ast}$), each element of $T$ (resp. $R$) has the form $f$ or $1+f$ for some $f\in XL[X]$. Each of these elements is irreducible in $R$ if and only if it is irreducible in $T$ (\cite{16}, Lemma 1.5; 27). If $x$ is a product of irreducibles, we may assume that each irreducible factor has the form $f$ or $1+f$ for some $f\in XL[X]$. Thus $x$ is a product of irreducible elements in $R$ if and only if it is a product of irreducible elements in $T$. Hence $R$ is atomic if and only if $T$ is atomic.
	
	\medskip
	
	(b) \ \ We first observe that a principal ideal of $R$ or $T$ may be generated by either $f$ or $a+f$ for some $f\in XL[X]$. Let $f$, $g\in XL[X]$. It easily verified that $(1+f)R\subset (1+g)R$ if and only if $(1+f)T\subset (1+g)T$, $fR\subset (1+g)R$ if and only if $fT\subset (1+g)T$, and $fR\subset gR$ if and only if $fT\subset gT$. Also, if $fT\subset gT$, then $fR\subset (\alpha g)R$ for some $\alpha\in K^{\ast}$. Hence, to each chain of principal ideals of length $s$ in $R$ starting at $fR$ (resp., $(1+f)R$), there corresponds a chain of principal ideals of length $s$ in $T$ starting at $fT$ (resp., $(1+f)T$), and conversely. Thus $R$ satisfies ACCP if only if $T$ satisfies ACCP.
\end{proof}

Propositions \ref{p3.6}, \ref{p3.7} represent the BFD property in polynomial composites.

\begin{pr}
	\label{p3.6} 
	If $A+XB[X]$ is a Noetherian domain, where $A\subset B$ are domains, then $A+XB[X]$ is a BFD.
\end{pr}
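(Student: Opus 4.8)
The plan is to reduce the statement to the well-known fact that every Noetherian integral domain is a BFD, and to indicate how that fact is established. First I would observe that $A+XB[X]$ is automatically an integral domain: it is a subring of $B[X]$, which is a domain because $B$ is. Thus the hypothesis ``$A+XB[X]$ is a Noetherian domain'' is really the hypothesis that the ring $R:=A+XB[X]$ is Noetherian, and the conclusion to be proved is the general assertion that a Noetherian domain is a BFD.

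Recall that a domain $R$ is a BFD precisely when it is atomic and factorization lengths are bounded. Atomicity is immediate: a Noetherian ring satisfies ACCP, and ACCP implies atomicity (as recorded in the hierarchy of Section~\ref{R1}). Hence only the bound on lengths requires work, and the cleanest route is to produce a superadditive length function, i.e. a map $\ell$ from the nonzero elements of $R$ to $\mathbb{N}_0$ with $\ell(a)=0$ iff $a\in R^{\ast}$ and $\ell(ab)\geq\ell(a)+\ell(b)$; for such an $\ell$, any factorization $x=a_1\cdots a_n$ into nonunits forces $n\leq\ell(x)$.

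To build $\ell$ I would use the standard commutative-algebra toolkit for Noetherian domains. Fix a nonzero nonunit $x$ and let $\mathfrak{p}_1,\dots,\mathfrak{p}_r$ be the finitely many primes minimal over $(x)$; by Krull's principal ideal theorem each has height one, so each localization $R_{\mathfrak{p}_i}$ is a one-dimensional Noetherian local domain in which the Krull intersection theorem makes the order function $v_i:=\operatorname{ord}_{\mathfrak{p}_iR_{\mathfrak{p}_i}}$ well defined, finite, and superadditive. Setting $N(x)=\sum_i v_i(x)$ should bound the length of every factorization of $x$, once one checks that each nonunit factor is actually counted.

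The main obstacle is exactly this last point: a nonunit factor $a_j$ of $x$ could, a priori, be a unit in every $R_{\mathfrak{p}_i}$ and hence contribute nothing to $\sum_i v_i$. I would remove this obstruction by noting that $(a_j)$ is a proper nonzero ideal, so it admits a minimal prime $\mathfrak{q}$; since $(x)\subseteq(a_j)\subseteq\mathfrak{q}$, the prime $\mathfrak{q}$ contains some minimal prime $\mathfrak{p}_i$ of $(x)$, and a height comparison (again via Krull's principal ideal theorem, both primes having height one in a domain) forces $\mathfrak{q}=\mathfrak{p}_i$. Thus $v_i(a_j)\geq 1$ for some $i$, every nonunit factor is seen, and $n\leq\sum_i v_i(x)=N(x)$. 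This yields the bound and completes the proof that the Noetherian domain $A+XB[X]$ is a BFD; alternatively, one may simply appeal to the known fact that every Noetherian domain is a BFD.
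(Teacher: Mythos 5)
Your proposal is correct and takes essentially the same route as the paper: the composite structure of $A+XB[X]$ plays no role, and the statement reduces to the general fact that every Noetherian domain is a BFD, which the paper simply cites as Proposition 2.2 of Anderson--Anderson--Zafrullah. Your self-contained proof of that general fact checks out (atomicity via ACCP, then a superadditive length function built from the order functions at the finitely many height-one primes minimal over $(x)$, with the height comparison ensuring every nonunit factor contributes at least $1$).
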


\begin{proof}
	\cite{0}, Proposition 2.2.
\end{proof}

\begin{pr}
	\label{p3.7} 
	Let $T=K+XL[X]$, where $K\subset L$ are fields. Let $D$ be a subring of $K$ and $R=D+XL[X]$. Then $R$ is a BFD if and only if $T$ is a BFD and $D$ is a field.
\end{pr}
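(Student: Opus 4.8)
The plan is to mirror the proof of Proposition \ref{p3.5}, upgrading the atomic/ACCP correspondence there to a correspondence between factorization \emph{lengths}. First I would dispose of the field hypothesis: if $R$ is a BFD then $R$ is atomic, so Proposition \ref{p3.5}(a) forces $D$ to be a field (and $T$ to be atomic); conversely, if $T$ is a BFD then $T$ is atomic, and with $D$ a field Proposition \ref{p3.5}(a) gives that $R$ is atomic. Thus in either direction the atomicity clause of the BFD definition comes for free, and one may assume throughout that $D$ is a field and that both $R$ and $T$ are atomic. It then remains only to transfer the bound on the number of irreducible factors.

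The key structural fact, already recorded in the proof of Proposition \ref{p3.5}, is that every nonzero element of $T$ (resp.\ $R$) is, up to a unit of $K^{\ast}$ (resp.\ $D^{\ast}$), of the form $f$ or $1+f$ with $f\in XL[X]$, and that an element in one of these normalized forms is irreducible in $R$ if and only if it is irreducible in $T$ (\cite{16}, Lemma 1.5). Since $XL[X]\subseteq R\subseteq T$, these normalized forms always lie in $R$. I would use this to show that for a fixed nonzero nonunit, the set of attainable lengths of irreducible factorizations is the same whether computed in $R$ or in $T$. One inclusion is immediate: because $R\subseteq T$ and every $R$-irreducible in normalized form is $T$-irreducible, any factorization of length $n$ in $R$ is at once a factorization of length $n$ in $T$.

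For the reverse inclusion I would start from a factorization $y=b_1\cdots b_m$ in $T$, normalize each $b_i$ to the form $f_i$ or $1+f_i$ by extracting a unit $u_i\in K^{\ast}$, and then absorb the accumulated unit $\prod u_i$ back into $R$. Writing $y$ up to a unit as $g$ or $1+g$ with $g\in XL[X]$ and inspecting constant terms gives two cases: when the constant term is $1$ the accumulated unit is forced to equal $1$; when it is $0$ at least one normalized factor lies in $XL[X]$, and multiplying that single factor by $\prod u_i\in K^{\ast}\subseteq L^{\ast}$ keeps it inside $XL[X]\subseteq R$ and irreducible, yielding an $R$-factorization of the same length $m$. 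Consequently the attainable lengths agree, and the bound $N(x)$ valid for one ring serves as a bound for the other, so $R$ is a BFD if and only if $T$ is. I expect the constant-term bookkeeping in this last step---ensuring the leftover $K^{\ast}$-unit can always be absorbed into a genuine $R$-factor without altering the length---to be the main obstacle, precisely because the unit groups $D^{\ast}\subseteq K^{\ast}\subseteq L^{\ast}$ are strictly nested.
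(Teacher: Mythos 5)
Your argument is correct, but it is genuinely different in character from what the paper does: the paper's entire proof of Proposition~\ref{p3.7} is an appeal to \cite{0}, Proposition~1.2 (first to conclude that $D$ must be a field, then to the \emph{proof} of that proposition for the equivalence of the BFD property in $R$ and $T$), so no actual factorization argument appears in the text. You instead give a self-contained proof modeled on Proposition~\ref{p3.5}: you reduce the atomicity clause of the BFD definition to Proposition~\ref{p3.5}(a), and then transfer the length bound via the normalization of every element of $T$ (resp.\ $R$) to the form $f$ or $1+f$ with $f\in XL[X]$, using that these normalized forms lie in $R$ and are irreducible in $R$ iff in $T$. Your bookkeeping in the reverse direction is sound: when the normalized target has constant term $1$ the accumulated unit $\prod u_i$ is forced to be $1$ by comparing constant terms, and when the target lies in $XL[X]$ some normalized factor lies in $XL[X]$ and absorbs $\prod u_i\in K^{\ast}\subseteq L^{\ast}$ without leaving $R$ or losing irreducibility; together with the observation that nonunits of $R$ remain nonunits of $T$ once $D$ is a field, this gives equality of the sets of attainable factorization lengths, which is exactly what the BFD property requires. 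What your route buys is transparency --- it makes explicit precisely where the hypothesis that $D$ is a field enters (the unit groups $D^{\ast}\subseteq K^{\ast}$ and the absorption of the leftover unit) --- at the cost of length; the paper's citation is shorter but leaves the reader to reconstruct the same computation from \cite{0}.
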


\begin{proof}
	First suppose that $R$ is BFD. Then $D$ must be a field (\cite{0}, Proposition 1.2). Again from the proof of (\cite{0}, Proposition 1.2) we get that $R$ is a BFD if and only if $T$ is a BFD.  
\end{proof}

Propositions \ref{p3.8} and \ref{p3.9} represents the HFD property in polynomial composites.

\begin{pr}
	\label{p3.8} 
	Let $T=K+XL[X]$, where $K\subset L$ are fields. Let $D$ be a subring of $K$ and $R=D+XL[X]$. Then $R$ is a HFD if and only if $D$ is a field and $T$ is a HFD. 
\end{pr}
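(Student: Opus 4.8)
The plan is to mirror the structure of Proposition~\ref{p3.5}, isolating first the role of the hypothesis that $D$ be a field and then reducing the half-factorial property to a count of irreducible factors that can be transported between $R$ and $T$.

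First I would dispose of the case in which $D$ is not a field. If $d\in D$ is a nonzero nonunit, then for every $f\in XL[X]$ one has $d^{-1}f\in XL[X]$ (since $L$ is a field) and $f=d\cdot(d^{-1}f)$ exhibits $f$ as a product of two nonunits of $R$; hence no element of $XL[X]$ is irreducible in $R$. In particular $X$ admits no factorization into irreducibles, so $R$ is not atomic and a fortiori not a HFD. This shows that if $R$ is a HFD then $D$ must be a field, so for the remainder I may assume that $D$ is a field.

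Granting that $D$ is a field, I would record the atomicity equivalence ($R$ is atomic if and only if $T$ is atomic), which is exactly Proposition~\ref{p3.5}(a). It then remains to compare, for a fixed nonzero nonunit $x\in R$, the possible \emph{lengths} (numbers of irreducible factors) of its factorizations in $R$ and in $T$; the HFD property is precisely the statement that this set of lengths is a singleton. The classification of irreducibles in Corollary~\ref{c2.3}, applied once with $(A,B)=(D,L)$ and once with $(A,B)=(K,L)$, gives $\Irr R=\{aX:a\in L\}\cup\{a(1+Xf):a\in D,\ 1+Xf\in\Irr L[X]\}$ and the analogous description of $\Irr T$ with $a$ ranging over $K$. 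Since $D\subseteq K$, this yields $\Irr R\subseteq\Irr T$, so every factorization of $x$ into irreducibles of $R$ is at once a factorization into irreducibles of $T$ of the same length.

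The converse transport is the crux. Given a factorization $x=q_1\cdots q_m$ into irreducibles of $T$, I would separate the factors of the form $aX$ (say $k$ of them) from those of the form $b_j(1+Xf_j)$ with $b_j\in K$ and $1+Xf_j\in\Irr L[X]$, collect all the scalars into a single $c\in L$, and write $x=c\,X^{k}\prod_j(1+Xf_j)$. Each bare $X$ lies in $\Irr R$, and each $1+Xf_j$ lies in $\Irr R$ since it has the form $a(1+Xf)$ with $a=1\in D$. If $k\ge 1$ I absorb $c$ into one $X$-factor, using that $cX\in\Irr R$; if $k=0$ then the constant term of $x$ equals $c$, so $c\in D$ because $x\in R$, and since $D$ is a field $c$ is a unit of $R$ that I fold into the factor $1+Xf_1$, leaving it irreducible. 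In either case I obtain a factorization of $x$ into exactly $m$ irreducibles of $R$. Thus $x$ has the same set of factorization lengths in $R$ as in $T$, and combined with the atomicity equivalence this gives that $R$ is a HFD if and only if $T$ is a HFD. The main obstacle is precisely this last redistribution of the scalar $c$: it is where the hypothesis that $D$ is a field is essential, guaranteeing that $c$ can be folded into an existing irreducible without altering the number of factors.
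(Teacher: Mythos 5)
Your proof is correct and follows essentially the same route as the paper's: the paper also first forces $D$ to be a field and then asserts that factorizations into irreducibles in $R$ and in $T$ have the same length, though it outsources that length comparison to Proposition~1.2 of the Anderson--Anderson--Zafrullah paper rather than carrying it out. Your version simply makes that step self-contained via Corollary~\ref{c2.3}, with the redistribution of the scalar $c$ (and, implicitly, the passage from elements of $T$ to associates lying in $R$) supplying exactly the detail the paper leaves to the citation.
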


\begin{proof}
	As in Proposition 1.2 (\cite{0}), $D$ is necessarily a field. The proof of Proposition 1.2 shows that a factorization into irreducibles in $R$ has the same length as such a factorization in $T$. Hence $R$ is a HFD if and only if $T$ is a HFD. 
\end{proof}

\begin{pr}
	\label{p3.9}
	Let $A$ be a subring of a field $K$. Then $R=A+XK[X]$ is a HFD if and only if $A$ is a field.
\end{pr}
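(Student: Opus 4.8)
The plan is to treat the two implications separately, leaning on the machinery already developed for composites $D+XL[X]$.

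For the ``only if'' direction I would argue by contraposition: if $A$ is not a field, then $R$ fails to be atomic. Pick a nonzero nonunit $d\in A$. Since $R=A+XK[X]$ is a domain, a unit of $R$ must be a constant whose inverse is again a constant, so the units of $R$ are exactly the units of $A$; hence $d$ is a nonzero nonunit of $R$. As $K$ is a field, every $f\in XK[X]$ satisfies $f=d\cdot(d^{-1}f)$ with $d^{-1}f\in XK[X]\subseteq R$, so no element of $XK[X]$ is irreducible and $X$ admits no factorization into atoms. Thus $R$ is not atomic, a fortiori not a HFD. This is precisely the mechanism already used in Propositions \ref{p3.5}, \ref{p3.7} and \ref{p3.8}.

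For the ``if'' direction (assume $A$ is a field) the cleanest route is to invoke Proposition \ref{p3.8} with the degenerate extension $L=K$. Then $T=K+XK[X]=K[X]$ is a UFD, hence a HFD, and $D=A$ is a field by hypothesis; Proposition \ref{p3.8} then gives that $R=A+XK[X]$ is a HFD. Notably this simultaneously recovers the ``only if'' direction as well, since Proposition \ref{p3.8} forces $D=A$ to be a field whenever $R$ is a HFD. The only caveat is that one must read $K\subset L$ in Proposition \ref{p3.8} as allowing equality.

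Should one prefer an argument independent of the case $L=K$, I would instead exhibit an additive length invariant. For nonzero $g\in R$ write $g=X^r\tilde g$ with $r=\mathrm{ord}_X(g)$ and $\tilde g(0)\neq 0$, and set $\lambda(g)=r+\Omega(\tilde g)$, where $\Omega(\tilde g)$ counts the irreducible factors of $\tilde g$ with multiplicity in the UFD $K[X]$. Both $\mathrm{ord}_X$ and $\Omega$ are additive, so $\lambda(gh)=\lambda(g)+\lambda(h)$, and units have $\lambda=0$. Using the description of $\Irr D$ from Corollary \ref{c2.3} (atoms are $aX$ with $a\in K$, or $a(1+Xf)$ with $a\in A^{\ast}$ and $1+Xf$ irreducible in $K[X]$) one checks $\lambda(p)=1$ for every atom $p$. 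Since $R$ is atomic by Theorem \ref{t2.2}(3), any factorization $g=p_1\cdots p_n$ into atoms satisfies $n=\lambda(g)$, which depends only on $g$; this is exactly the HFD property.

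The main obstacle is the ``if'' direction, i.e. the equal-length property. The shortcut through Proposition \ref{p3.8} reduces it to the fact that $K[X]$ is a HFD, whereas the self-contained route concentrates the difficulty into verifying that $\lambda$ is well defined, additive, and equal to $1$ on each atom; both ultimately rest on $K[X]$ being a UFD together with the classification of irreducibles in Corollary \ref{c2.3}.
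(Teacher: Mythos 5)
Your proposal is correct and its core is the same as the paper's. The ``only if'' direction is the paper's argument verbatim in spirit: a nonzero nonunit $d\in A$ makes every element of $XK[X]$ infinitely divisible (the paper writes $X=a^n(X/a^n)$), so $R$ is not even atomic. For the ``if'' direction, the paper simply asserts that the number of irreducible factors of $g$ in $R$ equals the number of irreducible factors of $g$ in the PID $K[X]$; your length function $\lambda(g)=\mathrm{ord}_X(g)+\Omega(\tilde g)$ is exactly that count, and checking $\lambda(p)=1$ on the atoms $aX$ and $a(1+Xf)$ from Corollary \ref{c2.3} makes the paper's terse sentence into a complete verification. Your alternative shortcut through Proposition \ref{p3.8} with $L=K$ is a genuine variation the paper does not take; it buys brevity, but, as you note, it requires reading the hypothesis $K\subset L$ there as permitting equality (harmless here, since $K+XK[X]=K[X]$ is a UFD, and there is no circularity because Proposition \ref{p3.8} rests on an external reference rather than on Proposition \ref{p3.9}). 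Either way the argument is sound.
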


\begin{proof}
	($\Rightarrow$) Clearly, $R$ a HFD implies that $A$ is a HFD. Suppose that $A$ is not a field, so there is an irreducible element $a\in A$. Then $X=a^n(X/a^n)$ for all $n\in\mathbb{N}$. Thus $A$ must be a field.
	
	\medskip
	
	($\Leftarrow$) Suppose that $A$ is a field. By \cite{Matysiak1} Theorem 2.1, $R=A+XK[X]$ is atomic. The proof of Theorem 2.1 \cite{Matysiak1} shows that an irreducible element of $R$ is of the for $aX$, where $a\in K$ or $a(1+Xf[X])$, where $a\in A$, $f(X)\in K[X]$, and $1+Xf(X)$ is irreducible in $K[X]$. Thus for any $g(X)\in R$, the number of irreducible factors from $R$ is the same as the number of irreducible factors in a representation of $g(X)$ as a product of irreducible factors from the PID $K[X]$. Hence $R$ is a HFD.
\end{proof}

Recall that $R$ is an idf-domain if each nonzero element of $R$ has at most a finite number of nonassociate irreducible divisors. 

\begin{pr}
	\label{p3.10}
	Let $T=K+XL[X]$, where $K\subset L$ are fields. Let $M$ be a subfield of $K$ and $R=M+XL[X]$. Then:
	\begin{itemize}
		\item[(a) ] Suppose that $XL[X]$ contains an irreducible element. Then $R$ is an idf-domain if and only if $T$ is an idf-domain and the multiplicative group $K^{\ast}/M^{\ast}$ is finite.
		\item[(b) ] Suppose that $XL[X]$ contains no irreducible elements. Then $R$ is an idf-domain if and only if $T$ is an idf-domain.
	\end{itemize}
\end{pr}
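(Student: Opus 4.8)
The plan is to compute, for an arbitrary nonzero $g$, the set of its nonassociate irreducible divisors, using the description of irreducibles from Corollary~\ref{c2.3} together with the fact (from Proposition~\ref{p2.1}, since fields are reduced) that $R^{\ast}=M^{\ast}$ and $T^{\ast}=K^{\ast}$. By Corollary~\ref{c2.3}, every irreducible of $R$ is, up to a unit, either of \emph{first type} $aX$ with $a\in L^{\ast}$, or of \emph{second type} $1+Xf$ with $1+Xf\in\Irr L[X]$; the same holds for $T$, the only difference being that associates are taken modulo $M^{\ast}$ for $R$ and modulo $K^{\ast}$ for $T$.

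First I would isolate the only possible source of infinitely many divisors. Write $g=X^{r}u$ with $u\in L[X]$ and $u(0)\neq 0$. The second-type irreducibles dividing $g$ correspond to the monic irreducible factors of $u$ in the UFD $L[X]$, so there are only finitely many of them, and this count agrees in $R$ and in $T$. For the first-type divisors $aX$: if $r=0$ none divide $g$; if $r=1$ the relation $aX\mid g$ forces $a\in u(0)M^{\ast}$ in $R$ (resp.\ $a\in u(0)K^{\ast}$ in $T$), a single associate class; and if $r\geq 2$ then $aX\mid g$ for \emph{every} $a\in L^{\ast}$, so the nonassociate first-type divisors are in bijection with $L^{\ast}/M^{\ast}$ in $R$ (resp.\ $L^{\ast}/K^{\ast}$ in $T$). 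This yields the clean intermediate claim I would then record and prove: \emph{$M+XL[X]$ is an idf-domain if and only if $L^{\ast}/M^{\ast}$ is finite}, and correspondingly $T$ is an idf-domain iff $L^{\ast}/K^{\ast}$ is finite. The ``if'' direction holds because every $g$ then has at most $|L^{\ast}/M^{\ast}|$ first-type and finitely many second-type nonassociate irreducible divisors; the ``only if'' direction follows by testing $g=X^{2}$, whose nonassociate irreducible divisors are exactly the classes of $aX$, $a\in L^{\ast}$, a set in bijection with $L^{\ast}/M^{\ast}$. This last step uses the hypothesis of (a): $X$, and hence each $aX$, is genuinely irreducible in $XL[X]$, which is precisely what activates the obstruction.

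Part (a) then drops out of the tower of multiplicative groups $M^{\ast}\subseteq K^{\ast}\subseteq L^{\ast}$: from $[L^{\ast}:M^{\ast}]=[L^{\ast}:K^{\ast}]\,[K^{\ast}:M^{\ast}]$ one gets that $L^{\ast}/M^{\ast}$ is finite if and only if both $L^{\ast}/K^{\ast}$ and $K^{\ast}/M^{\ast}$ are finite, i.e.\ if and only if $T$ is an idf-domain and $K^{\ast}/M^{\ast}$ is finite. For part (b), the hypothesis that $XL[X]$ contains no irreducible element removes all first-type irreducibles, so the only irreducible divisors of any $g$ are of second type; since these are always finite in number and correspond between $R$ and $T$, we get $R$ idf iff $T$ idf, with no condition on $K^{\ast}/M^{\ast}$. (In the present field setting $X$ is always irreducible in $M+XL[X]$, so the hypothesis of (b) is in fact vacuous; I would remark on this rather than pretend the case can occur here.)

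The main obstacle is the delicate divisor bookkeeping in the second paragraph: verifying that the boundary case $r=1$ contributes only a single associate class while $r\geq 2$ contributes the whole quotient group is exactly where the real content, and the appearance of $K^{\ast}/M^{\ast}$, comes from. Once the intermediate claim characterizing idf-ness by finiteness of $L^{\ast}/M^{\ast}$ is in hand, both parts (a) and (b) are immediate.
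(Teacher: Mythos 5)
Your argument is correct, but it follows a genuinely different route from the paper's. The paper compares $R$ and $T$ directly: it fixes an irreducible $f\in XL[X]$, observes that the divisors $af$ of $f^{2}$ with $a\in K^{\ast}$ fall into $|K^{\ast}/M^{\ast}|$ associate classes in $R$ (forcing that index to be finite when $R$ is an idf-domain), and then transports complete sets of nonassociate irreducible divisors back and forth between $R$ and $T$ by multiplying by coset representatives of $K^{\ast}/M^{\ast}$. You instead establish an absolute criterion --- $M+XL[X]$ is an idf-domain if and only if $L^{\ast}/M^{\ast}$ is finite --- by splitting irreducibles into the two types of Corollary~\ref{c2.3} and doing the $r=0,1,\geq 2$ bookkeeping on $g=X^{r}u$, and then deduce (a) from multiplicativity of the index in the tower $M^{\ast}\subseteq K^{\ast}\subseteq L^{\ast}$. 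Your intermediate claim checks out: the test element $X^{2}$ plays the role of the paper's $f^{2}$ (its nonassociate irreducible divisors are exactly the classes of $aX$, $a\in L^{\ast}$, modulo $R^{\ast}=M^{\ast}$), and the second-type count is finite because $L[X]$ is a UFD and divisibility by $1+Xf$ transfers between $R$, $T$ and $L[X]$ as in the proof of Theorem~\ref{t2.2}. What your approach buys is a sharper statement (it characterizes when $T$ itself is an idf-domain, e.g.\ it shows at once that $\mathbb{R}+X\mathbb{C}[X]$ is not one) and it makes transparent where the factor $K^{\ast}/M^{\ast}$ comes from; the paper's relative argument avoids naming $L^{\ast}/M^{\ast}$ but needs the coset-representative transfer in both directions. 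Your parenthetical remark on (b) is also accurate: since $M$ is a subfield, $X$ is irreducible in $M+XL[X]$ by a degree argument, so the hypothesis of (b) never holds in this setting, and the paper's proof of (b), which speaks of irreducibles of the form $a+f$ with $a\in K^{\ast}$, is really the argument for the case of a quasilocal non-field bottom ring inherited from the source it adapts.
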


\begin{proof}
	(a)\ \  We first note that an element of $XL[X]$ is irreducible in $R$ if and only if it is irreducible in $T$. Let $f\in XL[X]$ be irreducible. First suppose that $R$ is an idf-domain. Then $af\mid f^2$ for all $a\in K^{\ast}$. Note that $af$ and $bf$ are irreducible in both $R$ and $T$, and that they are associates in $R$ if and only if $a$ and $b$ lie in the same coset in $K^{\ast}/M^{\ast}$. Hence $K^{\ast}/M^{\ast}$ if finite. Let $y\in T$. By multiplying by a suitable $a\in K^{\ast}$, we may assume that $y\in R$. Let $y_1, y_2, \dots, y_n$ be the distinct nonassociate irreducible divisors of $y$ in $R$. It is easily verified that any irreducible divisor of $y$ in $T$ is associated to one of the $y_i$'s. Thus $T$ is also an idf-domain. Conversely, suppose that $T$ is an idf-domain and that $K^{\ast}/M^{\ast}$ is finite. Let $z\in R$. Let $z_1, z_2, \dots, z_r$ be a complete set of nonassociate irreducible divisors of $z$ in $T$, which we may assume are all in $R$, and let $a_1, a_2, \dots, a_s$ be a set of coset representatives of $K^{\ast}/M^{\ast}$. Then any irreducible divisor of $z$ in $R$ is an associate of some $a_iz_j$. Hence $R$ is an idf-domain.
	
	\medskip
	
	\noindent 
	(b)\ \  Since $XL[X]$ has no irreducible elements, an irreducible element in $T$ (resp., in $R$) has the form $a+f$ for some $a\in K^{\ast}$ (resp., $a\in M^{\ast}$) and $f\in XL[X]$. Hence, up to associates, each has the form $1+f$ for some $f\in XL[X]$. It is then easily verified that $\{1+f_1, 1+f_2, \dots, 1+f_n\}$ is a complete set of nonassociate irreducible divisors of a given element with respect to $R$ if and only if it is a complete set of nonassociate irreducible divisors with respect to $T$.
	
\end{proof}

\begin{pr}
	\label{p3.11} 
	Let $T$ be a quasilocal integral domain of the form $K+XL[X]$, where $K\subset L$ are fields. Let $D$ be a subring of $K$ and $R=D+XL[X]$. If $D$ is not a field, then $R$ is an idf-domain if and only if $D$ has only a finite number of nonassociate irreducible elements.
\end{pr}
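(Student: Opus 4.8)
The plan is to prove both implications by classifying the irreducible elements of $R = D + XL[X]$ and then bounding, for a fixed nonzero $y \in R$, the number of its nonassociate irreducible divisors. Throughout I will use that, by Proposition \ref{p2.1} applied with $A=D$, $B=L$ in the domain $R$, the units of $R$ are exactly $D^{\ast}$, so that $R^{\ast}\cap D = D^{\ast}$; and that, since $D$ is not a field, no element of $XL[X]$ is irreducible in $R$, exactly as in the proof of Proposition \ref{p3.5}: if $f\in XL[X]$ and $d\in D$ is a nonunit, then $f=d\,(d^{-1}f)$ with $d^{-1}f\in XL[X]\subseteq R$ again a nonunit.

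For the implication ($\Rightarrow$) I will argue by contraposition. First I observe that every irreducible $d\in D$ remains irreducible in $R$: a factorization $d=fg$ in $R$ forces $\deg f=\deg g=0$ by additivity of degrees, hence $f,g\in D$, and irreducibility in $D$ yields a factor that is a unit of $D$, hence of $R$. Next, every nonzero $d\in D$ divides $X$ in $R$, because $X=d\,(d^{-1}X)$ with $d^{-1}X\in XL[X]\subseteq R$. Finally, two irreducibles of $D$ that are nonassociate in $D$ stay nonassociate in $R$, since $R^{\ast}\cap D=D^{\ast}$. Consequently, if $D$ had infinitely many nonassociate irreducibles, then $X$ would have infinitely many nonassociate irreducible divisors in $R$, so $R$ would fail to be an idf-domain.

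For the implication ($\Leftarrow$), assume $D$ has only finitely many nonassociate irreducibles $d_1,\dots,d_t$. I will first split the irreducibles of $R$ into two families. The degree-zero irreducibles are precisely the irreducibles of $D$, hence associates of $d_1,\dots,d_t$. For the positive-degree ones, a polynomial $f=c+Xg\in R$ of positive degree with $c$ a nonzero nonunit of $D$ factors as $f=c\,(1+Xc^{-1}g)$ into nonunits of $R$ (here $c^{-1}\in L$ and $1+Xc^{-1}g\in R$); so a positive-degree irreducible must have a unit constant term, and up to an associate it has the form $1+Xh$. Arguing as in the proofs of Theorem \ref{t2.2}(2) and Proposition \ref{p3.5}, $1+Xh$ is irreducible in $R$ if and only if it is irreducible in $L[X]$.

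Now fix a nonzero $y\in R$. Its degree-zero irreducible divisors lie among $d_1,\dots,d_t$, so there are at most $t$ of them up to associates, and this is where the finiteness hypothesis on $D$ enters. Its positive-degree irreducible divisors, normalized to have constant term $1$, are irreducible factors of $y$ in the PID $L[X]$; since $y$ has only finitely many such factors, and since two constant-term-$1$ polynomials that are associate in $L[X]$ must coincide, there are only finitely many of them up to associates in $R$. Hence $y$ has finitely many nonassociate irreducible divisors and $R$ is an idf-domain. The step I expect to be the most delicate is the classification of the positive-degree irreducibles together with the passage from $L[X]$-associates to $R$-associates, since this is what makes the count of positive-degree divisors finite independently of the (possibly infinite) size of $L$.
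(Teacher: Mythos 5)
Your proof is correct, but it takes a genuinely different route from the paper's. The paper's proof is essentially two lines and leans entirely on the quasilocality of $T$: for a nonzero nonunit $d\in D$ and $f\in XL[X]$ one has $d+f=d(1+f/d)$ with $1+f/d\in R^{\ast}$ \emph{because $T$ is quasilocal}, so every element with nonzero constant term is an associate of its constant term, the irreducibles of $R$ are exactly the associates of the irreducibles of $D$, and each of these divides every element of $XL[X]$; the equivalence then falls out immediately. You never use quasilocality: instead you must deal with a second family of irreducibles, the positive-degree elements $u(1+Xh)$ with $1+Xh\in\Irr L[X]$, and you control their contribution to the divisor count via unique factorization in the PID $L[X]$ together with the normalization of constant terms to pass from $L[X]$-associates to $R$-associates. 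That extra work buys something real: for the literal object $K+XL[X]$ with $X$ an indeterminate, the quasilocal hypothesis is actually vacuous ($1+X$ is a nonunit of $T$ lying outside the maximal ideal $XL[X]$, so $T$ is never quasilocal), and the paper's argument really belongs to the general quasilocal $D+M$ setting from which the proposition was adapted. Your argument proves the stated equivalence for the genuine polynomial composite without that hypothesis, at the cost of the irreducibility transfer $1+Xh\in\Irr R\Leftrightarrow 1+Xh\in\Irr L[X]$ and the associate bookkeeping, both of which you carry out correctly (the key facts $R^{\ast}=D^{\ast}$ and the factorization $c+Xg=c(1+Xc^{-1}g)$ for $c$ a nonzero nonunit of $D$ are exactly what is needed).
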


\begin{proof}
	Let $d$ be a nonzero nonunit of $D$. Then $f=d(f/d)$ shows that no element of $XL[X]$ is irreducible and $d$ divides each element of $XL[X]$. Also, $y=d+f=d(1+f/d)$ and $1+f/d\in R^{\ast}$ (since $T$ is quasilocal) shows that $y$ is irreducible in $R$ if and only if $d$ is irreducible in $D$. Thus $R$ is an idf-domain if and only if $D$ has only a finite number of nonassociate irreducible elements.
\end{proof}

\noindent 
{\bf Question} If $R$ is an idf-domain, then $R[X]$ be an idf-domain?

\medskip

The proposition \ref{p3.12} represents the FFD property in polynomial composites.

\begin{pr}
	\label{p3.12} 
	Let $T=K+XL[X]$, where $K\subset L$ are fields. Let $D$ be a subring of $K$ and $R=D+XL[X]$. Then $R$ is a FFD if and only if $T$ is a FFD, $D$ is a field, and $K^{\ast}/D^{\ast}$ is finite.
\end{pr}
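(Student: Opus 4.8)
The plan is to prove both implications by comparing the set of divisors of a fixed element computed in $R$ with the set computed in $T$. Throughout I would use that $T^{\ast}=K^{\ast}$ and $R^{\ast}=D^{\ast}$ (Proposition \ref{p2.1}, since a domain has no nonzero nilpotents), that $R\subseteq T$, and that every element of $T$ is $T$-associate to an element of $R$: an element of $XL[X]$ already lies in $R$, and an element with nonzero constant term can be scaled by the inverse of that term. I would also invoke the fact, recorded in the introduction, that an FFD is atomic.

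For the forward implication, suppose $R$ is an FFD. Then $R$ is atomic, so by Proposition \ref{p3.5}(a) the subring $D$ must be a field; in particular $R^{\ast}=D^{\ast}$ and $K^{\ast},L^{\ast}$ are the nonzero constants. To obtain finiteness of $K^{\ast}/D^{\ast}$ I would examine the element $X^{2}\in R$: for every $a\in L^{\ast}$ one has $X^{2}=(aX)(a^{-1}X)$ with $aX,\,a^{-1}X\in XL[X]\subseteq R$, so each $aX$ is a divisor of $X^{2}$ in $R$, and $aX,bX$ are $R$-associate exactly when $a/b\in D^{\ast}$. Since $R$ is an FFD, $X^{2}$ has only finitely many nonassociate divisors, which forces $L^{\ast}/D^{\ast}$ to be finite; hence so is its subgroup $K^{\ast}/D^{\ast}$.

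The substantial point of this direction is that $T$ is again an FFD. Fix a nonzero nonunit $y$, which we may take in $R$. I claim that every divisor $d$ of $y$ in $T$ is $T$-associate to a divisor of $y$ in $R$. Granting this, the well-defined map $[g]_{R}\mapsto[g]_{T}$ sends the set of nonassociate divisors of $y$ in $R$ — finite, since $R$ is an FFD — onto the set of nonassociate divisors of $y$ in $T$, so the latter is finite and $T$ is an FFD. To prove the claim, write $y=dh$ with $d,h\in T$; because $d(0),h(0)\in K$, because $d(0)h(0)=y(0)\in D$, and because $D$ is a field, one can choose $\kappa\in K^{\ast}$ with $\kappa d(0)\in D$ and $\kappa^{-1}h(0)\in D$ (take $\kappa=d(0)^{-1}$ when $d(0)\neq 0$, and $\kappa=h(0)$ or $\kappa=1$ otherwise). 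Then $\kappa d,\kappa^{-1}h\in R$, the identity $y=(\kappa d)(\kappa^{-1}h)$ exhibits $\kappa d$ as a divisor of $y$ in $R$, and $\kappa d\sim_{T}d$. This normalisation — turning a $T$-factorisation into an $R$-factorisation by a single $K^{\ast}$-scaling — is the step I expect to be the main obstacle, and it works precisely because the constant terms of both factors already lie in $K$.

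For the converse, assume $T$ is an FFD, $D$ is a field, and $K^{\ast}/D^{\ast}$ is finite; let $y\in R$ be a nonzero nonunit. Every divisor of $y$ in $R$ is also a divisor of $y$ in $T$, and by hypothesis $y$ has only finitely many, say $d_{1},\dots,d_{m}$, nonassociate divisors in $T$. If $g$ is a divisor of $y$ in $R$, then $g=\lambda d_{i}$ for some $i$ and some $\lambda\in T^{\ast}=K^{\ast}$, and two such divisors $\lambda d_{i},\lambda' d_{i}$ are $R$-associate exactly when $\lambda/\lambda'\in D^{\ast}$. Hence at most $|K^{\ast}/D^{\ast}|$ nonassociate divisors of $y$ in $R$ lie over each $d_{i}$, so $y$ has at most $m\,|K^{\ast}/D^{\ast}|<\infty$ nonassociate divisors in $R$. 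Therefore $R$ is an FFD, completing the equivalence.
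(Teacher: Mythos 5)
Your proof is correct and complete. The paper itself gives no argument for this proposition beyond the remark that the proof is similar to \cite{0}, Proposition 5.2, and your normalisation-by-constant-term argument (rescaling a $T$-factorisation $y=dh$ by a single $\kappa\in K^{\ast}$ so that both factors land in $R$, then counting associate classes of divisors via the fibres of $[g]_{R}\mapsto[g]_{T}$, each of size at most $|K^{\ast}/D^{\ast}|$) is exactly the standard argument behind that reference; the auxiliary steps also check out, namely invoking Proposition \ref{p3.5}(a) to force $D$ to be a field and using the divisors $aX$ of $X^{2}$ to force $K^{\ast}/D^{\ast}$ (in fact all of $L^{\ast}/D^{\ast}$) to be finite.
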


\begin{proof}
	Proof is similar to \cite{0} Proposition 5.2.
\end{proof}

Recall an integral domain $D$ is called an S-domain if for each prime ideal $P$ of $D$ with $ht P=1$, $ht P[X]=1$.

\begin{lm}
	\label{l3.13}
	For an integral domain $D$, the following statements are equivalent.
	\begin{itemize}
		\item[(a) ] $D$ is an S-domain.
		\item[(b) ] For each prime ideal $P$ of $D$ with $ht P=1$, $D_P$ is an S-domain.
		\item[(c) ] For each prime ideal $P$ of $D$ with $ht P=1$, $\overline{D_P}$ is a Pr\"ufer domain. 
	\end{itemize}
\end{lm}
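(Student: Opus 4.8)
The plan is to prove the equivalences by first localizing to reduce (a) $\Leftrightarrow$ (b) to a height computation in a polynomial ring, and then reducing (b) $\Leftrightarrow$ (c) to a statement purely about one-dimensional quasilocal domains, where valuation theory does the real work. The guiding observation is that for a height-one prime $P$ the localization $D_P$ is quasilocal of Krull dimension one, so its unique height-one prime is $PD_P$ (any nonzero prime properly below it would force $\operatorname{ht} PD_P \geq 2$). Hence the clause ``$D_P$ is an S-domain'' in (b) means nothing more than $\operatorname{ht}_{D_P[X]}(PD_P[X]) = 1$, and (b) asserts this for every height-one $P$ of $D$.

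For (a) $\Leftrightarrow$ (b), fix a height-one prime $P$ and put $S = D\setminus P$, so that $S^{-1}(D[X]) = D_P[X]$. Any prime $Q$ of $D[X]$ with $Q \subseteq P[X]$ satisfies $Q\cap D \subseteq P[X]\cap D = P$, hence $Q\cap S = \varnothing$; therefore localization at $S$ gives an inclusion-preserving bijection between the primes of $D[X]$ contained in $P[X]$ and the primes of $D_P[X]$ contained in $PD_P[X]$. Consequently $\operatorname{ht}_{D[X]}(P[X]) = \operatorname{ht}_{D_P[X]}(PD_P[X])$. Since $\operatorname{ht} P = 1$ is equivalent to $\operatorname{ht} PD_P = 1$, the statement ``$\operatorname{ht} P[X] = 1$ for every height-one $P$'' coincides with ``$D_P$ is an S-domain for every height-one $P$'', which is exactly (a) $\Leftrightarrow$ (b).

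It then remains to show, for a quasilocal domain $(R,\mathfrak{m})$ of dimension one with quotient field $K$ and integral closure $\overline{R}$, that $R$ is an S-domain iff $\overline{R}$ is Pr\"ufer. First I would reduce the S-property to the nonexistence of an \emph{upper to zero} strictly between $(0)$ and $\mathfrak{m}[X]$: if $\operatorname{ht}\mathfrak{m}[X] \geq 2$, a prime $Q$ with $(0)\subsetneq Q \subsetneq \mathfrak{m}[X]$ must have $Q\cap R = (0)$ (it cannot be $\mathfrak{m}$, else $Q \supseteq \mathfrak{m}[X]$), so $Q$ is an upper to zero. Such primes correspond to monic irreducibles of $K[X]$, and the condition $Q\subseteq \mathfrak{m}[X]$ is precisely the valuation-theoretic condition that $R$ admits a valuation overring of dimension $\geq 2$ centered on $\mathfrak{m}$. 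Thus $R$ is an S-domain iff $\dim_v R = 1$. Finally, $\dim_v R = 1$ is equivalent to $\overline{R}$ being Pr\"ufer: every valuation overring of $R$ is integrally closed and contains $R$, hence contains $\overline{R}$, so $\dim_v R \leq 1$ forces each localization $\overline{R}_M$ to be a one-dimensional valuation ring, whence $\overline{R}$ (of dimension $\dim R = 1$ by integrality) is Pr\"ufer; conversely, if $\overline{R}$ is Pr\"ufer then its valuation overrings are exactly its localizations, all of dimension $\leq 1$, and every valuation overring of $R$ arises this way, giving $\dim_v R = 1$.

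The main obstacle is the middle equivalence $\operatorname{ht}\mathfrak{m}[X] = \dim_v R$ --- equivalently, the translation of an upper to zero lying below $\mathfrak{m}[X]$ into a two-dimensional valuation overring of $R$ centered on $\mathfrak{m}$. This is the substantive classical input (Seidenberg--Kaplansky), and I would quote it rather than reprove it in detail; everything surrounding it, namely the localization bijection of (a) $\Leftrightarrow$ (b) and the passage between valuative dimension one and the Pr\"ufer property of $\overline{R}$, is routine bookkeeping about localization and integral closure.
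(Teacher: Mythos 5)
Your argument is correct, but note that the paper does not actually prove this lemma: its ``proof'' is a bare citation of Lemma~3.1 of Anderson--Anderson--Zafrullah, \emph{Rings between $D[X]$ and $K[X]$}. What you have written is essentially a reconstruction of the standard argument underlying that reference, so the comparison is between an honest (if partly quoted) proof and no proof at all. Your reduction of (a)~$\Leftrightarrow$~(b) --- observing that $D_P$ is one-dimensional quasilocal with $PD_P$ as its unique nonzero prime, and that localization at $D\setminus P$ gives an inclusion-preserving bijection between the primes of $D[X]$ contained in $P[X]$ and the primes of $D_P[X]$ contained in $PD_P[X]$, so the two heights agree --- is complete and needs no outside input. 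For (b)~$\Leftrightarrow$~(c) you correctly reduce to a one-dimensional quasilocal $(R,\mathfrak{m})$, and your identification of the genuine content as the Seidenberg-type equivalence between an upper to zero lying under $\mathfrak{m}[X]$ and a valuation overring of dimension $\geq 2$ centered on $\mathfrak{m}$ is fair; quoting it is reasonable since the cited source rests on the same material. The one place you undersell the work is in calling the passage from $\dim_v R\leq 1$ to the Pr\"ufer property of $\overline{R}$ ``routine bookkeeping'': the step that a quasilocal integrally closed domain all of whose valuation overrings have dimension $\leq 1$ is itself a valuation ring (equivalently, $\dim_v D=1$ iff $\overline{D}$ is a one-dimensional Pr\"ufer domain, a classical result recorded in Gilmer's \emph{Multiplicative Ideal Theory}) requires its own argument of the same Seidenberg flavour. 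Since you are quoting the classics anyway, quote that one too; with that adjustment the proposal is a sound and self-contained account of a fact the paper merely cites.
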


\begin{proof}
	\cite{1} Lemma 3.1
\end{proof}

\begin{lm}
	\label{l3.14}
	For any integral domain $D$, $D[X]$ is an S-domain.
\end{lm}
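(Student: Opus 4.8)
The plan is to verify the defining property directly: $D[X]$ is an S-domain precisely when $\operatorname{ht} Q[Y]=1$ in $D[X][Y]=D[X,Y]$ for every height-one prime $Q$ of $D[X]$, where $Y$ is a fresh indeterminate. The inequality $\operatorname{ht} Q[Y]\geq 1$ is automatic since $Q\neq 0$, so only the bound $\operatorname{ht} Q[Y]\leq 1$ carries content. Rather than argue about two-variable chains by hand, I would route everything through Lemma \ref{l3.13}(c) applied to the ring $D[X]$: it then suffices to show that $\overline{(D[X])_Q}$ is a Pr\"ufer domain for every height-one prime $Q$ of $D[X]$. Writing $K=\operatorname{Frac}(D)$, I would split the analysis according to the contraction $P:=Q\cap D$.

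In the case $P=0$ the prime $Q$ is an upper to zero. Localizing at $S=D\setminus\{0\}$ turns $D[X]$ into the PID $K[X]$, in which $QK[X]=(g)$ for some irreducible $g\in K[X]$, so $(D[X])_Q$ is the localization of $K[X]$ at the nonzero prime $(g)$, that is, a DVR. A DVR is integrally closed and Pr\"ufer, so this case is immediate. (Concretely, the same localization shows $Q[Y]$ extends to $gK[X,Y]$, which has height $1$ because $g$ stays irreducible in the UFD $K[X,Y]=K[X][Y]$.)

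In the case $P\neq 0$ I first note $0\subsetneq P[X]\subseteq Q$ with $P[X]$ prime, so $\operatorname{ht} Q=1$ forces $Q=P[X]$, and then $1=\operatorname{ht} Q\geq\operatorname{ht} P\geq 1$ gives $\operatorname{ht} P=1$. Localizing at $D\setminus P$ reduces matters to the one-dimensional local domain $(D_P,\mathfrak m)$, for which $\operatorname{ht}_{D_P[X]}\mathfrak m[X]=1$; since $\mathfrak m$ is the only height-one prime of $D_P$, this says $D_P$ is an S-domain, whence Lemma \ref{l3.13}(c) applied to $D_P$ makes $B:=\overline{D_P}$ a one-dimensional Pr\"ufer domain. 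The clean identification that drives this case is $(D[X])_Q=(D_P[X])_{\mathfrak m[X]}=D_P(X)$, the Nagata ring, because over the local ring $D_P$ the complement of $\mathfrak m[X]$ is exactly the set of primitive (unit-content) polynomials; its integral closure is $\overline{D_P}(X)=B(X)$.

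The hard part is the closing step, where I must conclude that $B(X)$ is Pr\"ufer so that Lemma \ref{l3.13}(c) for $D[X]$ applies. For this I would invoke the standard facts that forming the Nagata ring commutes with integral closure and that the Nagata ring of a Pr\"ufer domain is again Pr\"ufer (in fact B\'ezout). These are the technical points demanding care, since $D_P$ need not be Noetherian and $B=\overline{D_P}$ is known only abstractly, through Lemma \ref{l3.13}, rather than by an explicit construction. Once $B(X)$ is seen to be Pr\"ufer, both cases establish that $\overline{(D[X])_Q}$ is Pr\"ufer for every height-one prime $Q$, and Lemma \ref{l3.13}(c) yields that $D[X]$ is an S-domain.
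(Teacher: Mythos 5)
The paper gives no argument for this lemma at all---its ``proof'' is the citation \cite{1}, Theorem 3.2---so your write-up is not a variant of the paper's proof but the only actual proof on the table, and its structure is sound. The dichotomy on $P=Q\cap D$ is the right one: for $P=0$ the localization $(D[X])_Q=(K[X])_{QK[X]}$ is a DVR, hence integrally closed and Pr\"ufer; for $P\neq 0$ the chain $0\subsetneq P[X]\subseteq Q$ forces $Q=P[X]$ and $\operatorname{ht}P=1$. One point you should make explicit: the assertion $\operatorname{ht}_{D_P[X]}\mathfrak m[X]=1$ is not automatic but follows from the hypothesis $\operatorname{ht}_{D[X]}Q=1$, because every prime of $D[X]$ contained in $P[X]$ contracts into $P$ and therefore survives the localization at $D\setminus P$, so heights below $P[X]$ are preserved. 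From there $D_P$ is a one-dimensional local S-domain, Lemma \ref{l3.13} makes $B=\overline{D_P}$ Pr\"ufer, and the identification $(D[X])_{P[X]}=D_P(X)$ is correct since over the local ring $D_P$ the complement of $\mathfrak m[X]$ is exactly the set of unit-content polynomials. The two facts you flag as the delicate closing step are indeed standard and can be sourced from Gilmer's book (reference \cite{Gil} of this paper, Chapter 33): $\overline{D_P[X]}=B[X]$ together with commutation of integral closure with localization gives $\overline{D_P(X)}=(B[X])_N$ with $N$ the unit-content polynomials over $D_P$, and the Nagata ring of a Pr\"ufer domain is Pr\"ufer (even B\'ezout). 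If you prefer to avoid quoting those, note that incomparability for the integral extension $D_P[X]\subset B[X]$ shows the only nonzero primes surviving in $(B[X])_N$ are the $\mathfrak n[X]$ for $\mathfrak n$ maximal in $B$, and $(B[X])_{\mathfrak n[X]}=B_{\mathfrak n}(X)$ is a valuation domain because $B_{\mathfrak n}$ is; either way the argument closes and I see no gap.
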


\begin{proof}
	\cite{1}, Theorem 3.2.
\end{proof}

\begin{pr}
	\label{p3.15} 
	Let $D$ be an integral domain and $S$ a multiplicatively closed subset of $D$. Then $D+XD_S[X]$ is an S-domain.
\end{pr}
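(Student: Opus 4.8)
The plan is to verify the S-domain condition by localizing at height-one primes, leaning on Lemma \ref{l3.13} to move between $R := D + X D_S[X]$ and its localizations and on Lemma \ref{l3.14} for the base cases. By Lemma \ref{l3.13}(b) it suffices to prove that $R_P$ is an S-domain for every prime $P$ of $R$ with $ht\,P = 1$, so the whole argument reduces to identifying these localizations. I would first record that inverting $X$ collapses $R$ to a familiar ring: since $X \in R$ and $X(d/s) \in X D_S[X] \subseteq R$ for every $d/s \in D_S$, one gets $d/s = X^{-1}\cdot X(d/s) \in R[X^{-1}]$, so $R[X^{-1}] = D_S[X,X^{-1}]$, which is in turn a localization of $D[X]$ (at the multiplicative set generated by $S$ and $X$). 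I would also note that $XD_S[X]$ is prime, since $R/XD_S[X]\cong D$.

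Next I split on whether $X\in P$. If $X \notin P$, then $X$ is a unit in $R_P$, so $R_P$ is a localization of $R[X^{-1}]=D_S[X,X^{-1}]$, hence a localization $(D[X])_Q$ of $D[X]$ at a prime $Q$; because localizing away from $P$ preserves height, $Q$ has height one. Since $D[X]$ is an S-domain by Lemma \ref{l3.14}, Lemma \ref{l3.13} gives that $(D[X])_Q=R_P$ is an S-domain.

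The remaining case is $X\in P$, and here the key sub-claim is $XD_S[X]\subseteq P$. I would prove it by a squaring argument: for $Xg \in XD_S[X]$ with $a:=g(0)$, the element $(aX)^2=(a^2X)\cdot X$ lies in $P$ (as $a^2X\in R$ and $X\in P$), so $aX\in P$; and $Xg-aX\in X^2D_S[X]\subseteq XR\subseteq P$, whence $Xg\in P$. Since $XD_S[X]$ is a nonzero prime and $ht\,P=1$, this forces $P=XD_S[X]$, so there is a single height-one prime through $X$ to examine. Localizing $R$ at the set of polynomials with nonzero constant term inverts all of $D\setminus\{0\}$, turning $R$ into $K[X]$ (with $K$ the quotient field of $D$, the coefficients in $D_S\subseteq K$ becoming arbitrary), and then inverts the polynomials of $K[X]$ with nonzero constant term; thus $R_P=K[X]_{(X)}$, a discrete valuation ring, which is an S-domain as a localization of the S-domain $K[X]$ at a height-one prime. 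Assembling the two cases and applying Lemma \ref{l3.13}(b) yields that $R$ is an S-domain.

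I expect the main obstacle to be the two localization computations rather than any deep idea: concretely, confirming that $R[X^{-1}]=D_S[X,X^{-1}]$ is genuinely a localization of $D[X]$ so that Lemma \ref{l3.14} applies, and that $R_{XD_S[X]}$ really collapses to the DVR $K[X]_{(X)}$. The squaring trick establishing $X\in P\Rightarrow XD_S[X]\subseteq P$ is the one step that is not purely formal, and it is what guarantees that $XD_S[X]$ is the unique height-one prime containing $X$.
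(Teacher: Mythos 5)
Your proof is correct, and it rests on the same two pillars as the paper's: the local characterization of S-domains (Lemma \ref{l3.13}) and the fact that $D[X]$ is an S-domain for any domain $D$ (Lemma \ref{l3.14}). The difference lies in how you partition the height-one primes. The paper splits on whether $P\cap S$ is empty: it shows $P\cap S\neq\emptyset$ is impossible for a height-one prime (it would force $XD_S[X]\subseteq P$, hence $P=XD_S[X]$, which misses $S$), and then treats all height-one primes uniformly via the single localization $R_S=D_S[X]$, so that $R_P=(R_S)_{P_S}$ is a localization of the S-domain $D_S[X]$ at a height-one prime. You instead split on whether $X\in P$: when $X\notin P$ you realize $R_P$ as a localization of $D_S[X,X^{-1}]$, hence of $D[X]$; when $X\in P$ your squaring argument pins down $P=XD_S[X]$ and you compute $R_P=K[X]_{(X)}$ explicitly. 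The paper's route is slightly more economical, needing only the one computation $R_S=D_S[X]$, while yours makes the exceptional prime $XD_S[X]$ and its DVR localization completely explicit; both case analyses close correctly and both appeals to Lemmas \ref{l3.13} and \ref{l3.14} are legitimate.
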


\begin{proof}
	Let $R=D+XD_S[X]$ and let $P$ be a height-one prime ideal of $R$. First suppose that $P\cap S\neq\emptyset$. Then $P\supseteq XD_S[X]P=XD_S[X]$. But since $ht P=1$, $P=XD_S[X]$. But then $P\cap S=\emptyset$, a cotradiction. Thus we must have $P\cap S=\emptyset$. Then $P_S$ is a height-one prime ideal in $R_S=D_S[X]$. By Lemma \ref{l3.14}, $R_S$ is an S-domain. Hence $R_P=R_{S_{P_S}}$ is also an S-domain by Lemma \ref{l3.13} (a)$\Rightarrow$(b). Thus $R$ is an S-domain by Lemma \ref{l3.13} (b)$\Rightarrow$(a).
\end{proof}

\medskip

Recall, a commutative ring $R$ is called a Hilbert ring if every prime ideal of $R$ is an intersection of maximal ideals of $R$. In \cite{26} it was shown that if $D\subseteq K$, where $K$ is a field, then $D+XK[X]$ is a Hilbert domain if and only if $D$ is a Hilbert domain. Thus if $D$ is a PID that is not a field and $K$ is the quotient field of $D$, then $D+XK[X]$ is a two-dimensional, non-Noetherian, B\'ezout-Hilbert domain in which every maximal ideal is principal.

\begin{tw}
	\label{t3.16} 
	Let $D$ be an integral domain and $S$ a multiplicatively closed subset od $D$ with the property that for a prime $P$ of $D$ with $P\cap S\neq\emptyset$, then $Q\cap S\neq\emptyset$ for each prime $0\neq Q\subseteq P$. Then $R=D+XD_S[X]$ is a Hilbert domain if and only if $D$ and $D_S$ are Hilbert domains.
\end{tw}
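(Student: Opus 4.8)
The plan is to reduce the whole statement to two well-understood rings attached to $R$, namely the quotient $R/XD_S[X]\cong D$ and the localization $R_S\cong D_S[X]$, and then to test the Hilbert property prime-by-prime using Kaplansky's criterion that a domain is Hilbert if and only if every Goldman (G-) ideal is maximal. I would invoke the standard facts that a quotient of a Hilbert ring is Hilbert and that $A$ is Hilbert if and only if $A[X]$ is Hilbert; the latter lets me trade ``$D_S$ Hilbert'' for ``$D_S[X]=R_S$ Hilbert''. The isomorphism $R/XD_S[X]\cong D$ is induced by $f\mapsto f(0)$, while $R_S\cong D_S[X]$ because inverting $S$ frees the constant term to be an arbitrary element of $D_S$. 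It is also helpful to regard $R$ as the pullback $D\times_{D_S}D_S[X]$.

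First I would prove the dichotomy that every prime $P$ of $R$ either contains $XD_S[X]$ or satisfies $P\cap S=\emptyset$. Indeed, if $s\in P\cap S$ then $X=s\cdot(X/s)$ with $X/s\in XD_S[X]\subseteq R$, so $X\in P$; and for any $Xa/u\in XD_S[X]$ with $a\in D$, $u\in S$, the relation $u\cdot(Xa/u)=aX\in P$ together with $Xa/u=u\cdot(Xa/u^2)$ forces $Xa/u\in P$ regardless of whether $u\in P$, giving $XD_S[X]\subseteq P$. This splits $\Spec R$ into the primes pulled back from $\Spec D$ (those containing $XD_S[X]$) and the primes contracted from $\Spec R_S=\Spec D_S[X]$ (those disjoint from $S$), and a prime containing $XD_S[X]$ is maximal in $R$ exactly when its image in $D$ is maximal.

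With this dichotomy the easy halves fall out. For ($\Rightarrow$), $D\cong R/XD_S[X]$ is a quotient of the Hilbert ring $R$, hence Hilbert. For ($\Leftarrow$), a G-ideal $P$ of $R$ with $XD_S[X]\subseteq P$ descends to a G-ideal of $D$, which is maximal because $D$ is Hilbert, so $P$ is maximal in $R$. All the remaining content is therefore concentrated in the primes disjoint from $S$: for ($\Leftarrow$) one must show that if $D_S[X]$ is Hilbert then every G-ideal of $R$ disjoint from $S$ is already maximal in $R$, and for ($\Rightarrow$) that $R$ Hilbert forces $R_S=D_S[X]$ to be Hilbert (after which descent through $X$ yields $D_S$ Hilbert).

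The hard part will be exactly this transfer of the G-ideal/maximality property across the localization $R\rightleftarrows R_S=D_S[X]$ for primes disjoint from $S$, because localization in general neither preserves nor reflects the Hilbert property, and a subring of a G-domain sharing its fraction field need not be a G-domain (already $\mathbb{Z}\subseteq\mathbb{Q}$ shows this). This is precisely where the hypothesis on $S$ is indispensable: the condition ``$\mathfrak p\cap S\neq\emptyset\Rightarrow\mathfrak q\cap S\neq\emptyset$ for every prime $0\neq\mathfrak q\subseteq\mathfrak p$'' says that the nonzero primes of $D$ disjoint from $S$ form an up-set, so that $\Spec R_S\hookrightarrow\Spec R$ lands on a saturated family of primes with no new nonzero primes disjoint from $S$ appearing between $P$ and any prime strictly above it. I would use this saturation to show that a prime $P$ of $R$ disjoint from $S$ is a G-ideal (respectively maximal) in $R$ if and only if $PR_S$ is a G-ideal (respectively maximal) in $R_S$, thereby pulling back the single localizing element witnessing the G-domain $R_S/PR_S$ to one witnessing the G-domain $R/P$; Kaplansky's criterion then closes both directions. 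As a consistency check, when $D_S=K$ is a field we have $S=D\setminus\{0\}$, the hypothesis holds automatically, and the statement specializes to the result of \cite{26} that $D+XK[X]$ is Hilbert if and only if $D$ is Hilbert.
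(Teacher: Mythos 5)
Your setup is sound and matches the paper's: the dichotomy of $\Spec R$ into primes containing $XD_S[X]$ and primes disjoint from $S$, the identifications $R/XD_S[X]\cong D$ and $R_S\cong D_S[X]$, and the observation that the hypothesis on $S$ makes the nonzero primes of $D$ disjoint from $S$ an up-set are exactly the ingredients the paper uses. Your ($\Leftarrow$) direction also goes through essentially as in the paper: the G-property ascends from $R/P$ to the overring $R_S/PR_S$ inside its fraction field, $PR_S$ is then maximal in the Hilbert ring $D_S[X]$, and maximality descends to $P$ via the up-set property when $P\cap D\neq 0$, the degenerate case $PR_S\cap D_S=0$ forcing $D_S$ to be a field and reducing to the $D+XK[X]$ result of \cite{26}.

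The gap is in ($\Rightarrow$), and it is not a deferrable detail: the lemma you announce --- that for $P$ disjoint from $S$, $P$ is a G-ideal of $R$ if and only if $PR_S$ is a G-ideal of $R_S$ --- is false, and the direction you need ($PR_S$ a G-ideal $\Rightarrow$ $P$ a G-ideal) fails precisely for the primes carrying all the content, namely those with $P\cap D=0$. The hypothesis on $S$ speaks only about \emph{nonzero} primes of $D$, so the saturation argument gives no control over these. Concretely, take $D=\mathbb{Z}$ and $S=\mathbb{Z}\setminus p\mathbb{Z}$ (the hypothesis holds vacuously since $\dim\mathbb{Z}=1$) and $P=XD_S[X]$, which is disjoint from $S$: then $PR_S=(X)$ is a G-ideal of $\mathbb{Z}_{(p)}[X]$ because $\mathbb{Z}_{(p)}$ is a G-domain, yet $R/P\cong\mathbb{Z}$ is not a G-domain, so $P$ is not a G-ideal of $R$. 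Thus the non-maximal G-ideals of $D_S[X]$ lying over $(0)$ of $D_S$ --- exactly the ones that would witness $D_S[X]$ failing to be Hilbert --- can contract to primes of $R$ that are not G-ideals, and Kaplansky's criterion applied in $R$ then tells you nothing about them. This is the case the paper handles by a separate explicit construction: having first obtained that $D$ is Hilbert, it uses the up-set hypothesis to show every nonzero prime of $D_S$ is an intersection of maximal ideals, so if $D_S$ is not Hilbert there is $0\neq u$ lying in every nonzero prime of $D_S$; it then takes a prime $P$ of $R$ minimal over $(u+X)$ with $P\cap D=0$ and checks that every prime strictly containing $P$ contains both $u$ and $X$ while $u\notin P$, so $P$ is not an intersection of maximal ideals. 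Some argument of this kind for the primes lying over $(0)$ of $D$ is missing from your proposal and must be supplied.
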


\begin{proof}
	($\Rightarrow$) Suppose that $R$ is a Hilbert domain. Then $D\cong R/XD_S[X]$ is also a Hilbert domain. Suppose that $D_S$ is not a Hilbert domain. Let $Q$ be a nonzero prime ideal od $D$ with $Q\cap S\emptyset$. Since $D$ is a Hilbet domain, $Q=\bigcap_{\alpha} M_{\alpha S}$, where $\{M_{\alpha}\}$ is the set of maximal ideals of $D$ containing $Q$. Since $Q\cap S=emptyset$ by the hypothesis on $S$, each $M_{\alpha}\cap S=\emptyset$. Hence $Q_S=\bigcap M_{\alpha S}$ is an intersection of maximal ideals of $D_S$. So every nonzero prime ideal of $D_S$ is an intersection of maximal ideals. Hence there is a nonzero element $u\in D$ such that $u$ is in every nonzero prime ideal of $D_S$. Consider $u+X\in R$. Let $P$ be prime ideal of $R$ minimal over ($u+X$) with $P\cap D=0$. (Such a prime $P$ exists since $(u+X)\cap(D\setminus\{0\})=\emptyset$). If $Q$ is a prime ideal of $R$ with $P\subsetneq Q$, then $Q\cap D\neq 0$. For otherwise in $D_S[X]$, $0\neq P_S\subsetneq Q_S$ would both contract to $0$. Now if $Q\cap S\neq\emptyset$, then $X\in XD_S[X]\subseteq Q$, while if $Q\cap S=\emptyset$, then $u\in(Q_S\cap D_S)\cap D\subseteq Q$. So every prime ideal of $R$ properly containing $P$ contains both $u$ and $X$. Hence $P$ is not the intersection of the maximal ideals containing it, contradicting the fact that $R$ is a Hilbert domain. So $D_S$ must also be a Hilbert domain.
	
	\medskip
	
	($\Leftrightarrow$) Let $Q$ be a prime ideal of $R$. Suppose that $Q\cap S\neq\emptyset$. Then $XD_S[X]=XD_S[X]Q\subseteq Q$, so $Q=Q\cap D+XD_S[X]$. Since $D$ is a Hilbert domain, $Q\cap D$ is an intersection of maximal ideals, hence so is $Q$. So we may suppose that $Q\cap S=\emptyset$. Then since $D_S[X]$ is a Hilbert domain, $Q_S=\bigcap_{\alpha} M_{\alpha}$, where $\{M_{\alpha}\}$ is the set of maximal ideals of $D_S[X]$ containing $Q_S$. Then $Q=\bigcap_{\alpha}(M_{\alpha}\cap R)$. So it suffices to show that each $M_{\alpha}\cap R$ is a maximal ideal of $R$. So let $M$ be a maximal ideal of $D_S[X]$. Then $M=N_S$, where $N$ is a prime ideal of $D[X]$. Now $M$ maximal implies $M\cap D_S$ is maximal since $D_S$ is Hilbert domain. If $M\cap D_S=0$, then $D_S$ is a field and hence $R$ is a Hilbert domain (\cite{26}, Theorem 5). So we may assume that $M\cap D_S\neq 0$. Then by hypothesis on $S$, $(M\cap D_S)\cap D=N\cap D$ must also be maximal. Since $N\supsetneq (N\cap D)[X]$, $N$ must be a maximal ideal of $D[X]$. Hence $D[X]/N\subseteq R/M\cap R\subseteq D_S[X]/M=D_S[X].N_S=D[X]/N$ since $D[X]/N$ is a field. Therefore $M\cap R$ is a maximal ideal.  
\end{proof}

The next Proposition says that every polynomial composite is a one-dimensional B\'ezout domain.

\begin{pr}
	\label{p3.17}
	Let $K\subset L$ be a pair of fields with $L$ purely inseparable over $K$ (that is, $char K=p>0$ and for each $l\in L$, there exists a natural number $n=n(l)$ with $l^{p^n}\in K$). Then every ring $R$ between $K[X]$ and $L[X]$ is a one-dimensional almost B\'ezout domain.
\end{pr}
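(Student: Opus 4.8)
Recall that a domain $D$ is an \emph{almost B\'ezout domain} if for every pair of nonzero elements $a,b\in D$ there is a positive integer $n=n(a,b)$ such that the ideal $(a^n,b^n)$ is principal. The plan is to treat the two assertions separately; the engine driving the almost-B\'ezout part will be the Frobenius map in characteristic $p$, which is exactly where pure inseparability enters.

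First I would dispose of the dimension claim. Since $L$ is purely inseparable over $K$ it is in particular algebraic, hence integral, over the field $K$; therefore $L[X]$ is integral over $K[X]$. As $K[X]\subseteq R\subseteq L[X]$, every element of $R$ is integral over $K[X]$, so $R$ is an integral extension of the one-dimensional domain $K[X]$. Integral extensions of domains preserve Krull dimension, whence $\dim R=\dim K[X]=1$. (Note $R$ is a domain as a subring of $L[X]$, and it is genuinely one-dimensional because $X\in K[X]\subseteq R$ is a nonzero nonunit, so $R$ is not a field.)

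For the almost-B\'ezout property, fix nonzero $f,g\in R\subseteq L[X]$ and write $f=\sum a_iX^i$, $g=\sum b_jX^j$ with $a_i,b_j\in L$. By pure inseparability each coefficient satisfies $a_i^{p^{m}}\in K$ for some $m$, and likewise for the $b_j$; choosing $N$ larger than all these exponents and setting $q=p^N$, the freshman's dream $f^q=\sum a_i^{q}X^{iq}$ (and the analogous identity for $g$) shows that $f^q,g^q\in K[X]$, since each $a_i^{q}=(a_i^{p^{m}})^{p^{N-m}}\in K$. The conceptual crux of the whole proof is precisely this: pure inseparability supplies a single uniform power $q$ that pushes both polynomials into the subring $K[X]$.

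It then remains to check that $(f^q,g^q)$ is principal in $R$. Since $K[X]$ is a PID, $h:=\gcd(f^q,g^q)$ generates $(f^q,g^q)$ in $K[X]$, so there are $u,v\in K[X]$ with $h=uf^q+vg^q$, and $h$ divides both $f^q$ and $g^q$ in $K[X]$. Because $K[X]\subseteq R$, the relation $h=uf^q+vg^q$ gives $h\in f^qR+g^qR$, while $f^q,g^q\in hK[X]\subseteq hR$; hence $(f^q,g^q)_R=hR$ is principal, and $R$ is almost B\'ezout. I expect no serious obstacle beyond the power-raising step itself: once $f^q,g^q$ and the cofactors $f^q/h,\,g^q/h$ and the B\'ezout coefficients $u,v$ all lie in $K[X]\subseteq R$, the ideal computed in the PID $K[X]$ transfers verbatim to $R$.
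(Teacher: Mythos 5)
Your proof is correct and follows essentially the same route as the paper: integrality of $K[X]\subseteq R\subseteq L[X]$ gives $\dim R=1$, and a uniform Frobenius power $q=p^N$ pushes $f^q,g^q$ into the PID $K[X]$, whence $(f^q,g^q)R$ is principal. You simply spell out the final transfer step (via the B\'ezout identity $h=uf^q+vg^q$ and the cofactors lying in $K[X]\subseteq R$) that the paper leaves implicit.
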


\begin{proof}
	Since $K[X]\subset L[X]$ is an integral extension, $\dim R=\dim K[X]=1$. For each $f\in L[X]$, $f^{p^n}\in K[X]$ for $n$ large enough. Hence for $f, g\in R$, $f^{p^n}, g^{p^n}\in K[X]$ for some $n\in\mathbb{N}$. But $(f^{p^n}, g^{p^n})K[X]$ is principal. Hence $(f^{p^n}, g^{p^n})R$ is principal.
\end{proof}

Let $K$ be a field, $D$ a subring of $K$.
Every ring $R$ between $D[X]$ and $K[X]$ has a composite cover (\cite{1} Section II), i.e. the unique minimal overring of $R$ that is a composite.
Recall $I(B,A)=\{f(X)\in B[X]\mid f(A)\subseteq A\}$.

\begin{pr}
	\label{p3.18} 
	(a) Let $R$ be a domain with qoutient field $K$. Suppose that for each $0\neq r\in R$, $R/(r)$ is finite. Then the composite cover of $I(K,R)$ is $R+XK[X]$. 
	
	\medskip
	
	\noindent 
	(b) Let $A\subseteq B$ be rings where $A$ is finite. Then the composite cover of $I(B,A)$ is $A+XB[X]$. 
\end{pr}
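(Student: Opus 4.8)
\medskip

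The plan is to reduce both statements to computing two invariants of $I=I(K,R)$ (respectively $I=I(B,A)$): the subring $A_0\subseteq K$ (resp.\ $\subseteq B$) generated by the constant terms $\{f(0):f\in I\}$, and the subring $B_0$ generated by all coefficients occurring in elements of $I$. A composite $A'+XB'[X]$ contains $I$ if and only if $A'$ contains every constant term and $B'$ contains every coefficient of every $f\in I$; hence any such composite satisfies $A'\supseteq A_0$ and $B'\supseteq B_0$. Since the degree-$0$ coefficient is itself a coefficient we have $A_0\subseteq B_0$, so $A_0+XB_0[X]$ is a composite, and by the previous sentence it is the minimal one containing $I$. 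Thus the composite cover of $I$ is exactly $A_0+XB_0[X]$, and it remains only to identify $A_0$ and $B_0$ in each case.

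First I would dispose of the constant terms, which is immediate in both parts. If $f\in I(K,R)$ then $f(0)\in R$ because $0\in R$ and $f(R)\subseteq R$; conversely each element of $R$ occurs as the constant term of a constant polynomial lying in $I(K,R)$. Hence $A_0=R$ in part~(a), and the same argument with $A$ in place of $R$ gives $A_0=A$ in part~(b).

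The substance of the proof is to show $B_0=K$ in (a) and $B_0=B$ in (b), that is, that the coefficients of these polynomials are as large as the ambient field/ring allows. For part~(b) this is easy: put $h(X)=\prod_{a\in A}(X-a)\in A[X]$, a monic polynomial that vanishes at every point of $A$ (one factor vanishes at each such point). Then for every $b\in B$ we have $bh\in B[X]$ and $(bh)(A)=\{0\}\subseteq A$, so $bh\in I(B,A)$ with leading coefficient $b$. Thus every $b\in B$ is a coefficient of an element of $I(B,A)$, giving $B_0=B$ and composite cover $A+XB[X]$. For part~(a), I would imitate this construction, using the finiteness hypothesis to manufacture denominators: fix $0\neq r\in R$, let $q=|R/(r)|$, choose a complete residue system $a_1,\dots,a_q$ modulo $r$, and set
\[
g_r(X)=\frac{1}{r}\prod_{i=1}^{q}(X-a_i)\in K[X].
\]
For any $b\in R$ some $a_i$ satisfies $b\equiv a_i\pmod r$, so $r\mid(b-a_i)$ and hence $r\mid\prod_{i}(b-a_i)$, which shows $g_r(b)\in R$; therefore $g_r\in I(K,R)$ and its leading coefficient is $r^{-1}$. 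Since $r^{-1}\in B_0$ for every nonzero $r\in R$ and $R$ has quotient field $K$, we obtain $B_0=K$, so the composite cover is $R+XK[X]$.

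The step I expect to be the main obstacle is the verification in (a) that $g_r$ maps $R$ into $R$, since this is precisely where the hypothesis that $R/(r)$ is finite is indispensable: it guarantees a finite complete residue system, so that every element of $R$ is congruent modulo $r$ to one of the chosen representatives and some factor of the product becomes divisible by $r$. Part~(b) is then recognizable as the degenerate case of the same idea in which $A$ itself (rather than a residue ring $R/(r)$) is finite, so that no denominator is needed and one annihilates $A$ directly.
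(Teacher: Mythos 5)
Your proposal is correct and follows essentially the same route as the paper: part (a) hinges on the same polynomial $\frac{1}{r}\prod_{i}(X-r_i)$ built from a complete residue system modulo $r$, whose image of $R$ lands in $R$ and whose leading coefficient $r^{-1}$ forces the top ring of the cover up to $K$, and part (b) uses the same $b\prod_{a\in A}(X-a)$. The only difference is that you make explicit the minimality bookkeeping (identifying the cover as the composite generated by constant terms and by all coefficients), which the paper leaves implicit.
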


\begin{proof}
	(a) Let $r$ be a nonzero nonunit of $R$ and let $R/(r)=\{r_1+(r), \dots, r_n+(r)\}$. Set $f(X)=\dfrac{1}{r}(X-r_1)\dots (X-r_n)\in K[X]$. Now for $a\in R$, $a+(r)=r_i+(r)$ for some $i$, so $a-r_i=sr$ for some $s\in R$. Hence $f(a)=\dfrac{1}{r}(sr)\prod_{j\neq 1}(a-r_j)\in R$. So $f(X)=\dfrac{1}{r}X^n+\dots \in I(K,R)$ and hence $I(K,R)$ has composite cover $R+XK[X]$.
	
	\medskip
	
	\noindent 
	(b) For each $b\in B$, $f(X)=b(\prod_{a\in A}(X-a))\in I(B,A)$.
\end{proof}

\section{Characterization of field extensions in terms of polynomial composites}
\label{R4}

Let's start with an auxiliary lemmas that will help.

\begin{lm}
	\label{l4.1}
	Let $\varphi\colon K_1\to K_2$ be an isomorphism of fields and $\varPsi\colon K_1[X]\to K_2[X]$ be an isomorphism of polynomials ring. If polynomial $f_1\in \Irr K_1[X]$, $f_1$ has a root $a_1$ in an extended $L_1$ of $K_1$ and polynomial $f_2=\varPsi (f_1)$ has a root $a_2$ in an extended $L_2$ of $K_2$, then there exists $\varPsi'\colon K_1(a_1)\to K_2(a_2)$, which is an extension of $\varphi$ and $\varPsi'(a_1)=a_2$ holds.
\end{lm}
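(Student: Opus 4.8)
The plan is to realise both simple extensions $K_1(a_1)$ and $K_2(a_2)$ as quotients of the respective polynomial rings and then transport the given isomorphism $\varPsi$ through these identifications. Throughout I take $\varPsi$ to be the coefficientwise extension of $\varphi$ (fixing $X$), which is what makes the phrase ``extension of $\varphi$'' in the conclusion meaningful.

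First I would handle the source side. Since $f_1\in\Irr K_1[X]$ and $a_1$ is a root of $f_1$ in $L_1$, the polynomial $f_1$ is an associate of the minimal polynomial of $a_1$ over $K_1$; consequently the evaluation homomorphism $\mathrm{ev}_{a_1}\colon K_1[X]\to L_1$, $X\mapsto a_1$, has kernel exactly $(f_1)$ and image $K_1(a_1)$. This yields a field isomorphism $\alpha_1\colon K_1[X]/(f_1)\xrightarrow{\sim}K_1(a_1)$ that sends $X+(f_1)\mapsto a_1$ and restricts to the identity on $K_1$. Symmetrically, because $\varPsi$ is a ring isomorphism, $f_2=\varPsi(f_1)$ is again irreducible over $K_2$, so the same argument applied to the root $a_2$ produces an isomorphism $\alpha_2\colon K_2[X]/(f_2)\xrightarrow{\sim}K_2(a_2)$ with $X+(f_2)\mapsto a_2$ that is the identity on $K_2$.

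Next I would descend $\varPsi$ to the quotients. An isomorphism carries a principal ideal onto the ideal generated by the image of its generator, so $\varPsi\big((f_1)\big)=(f_2)$, and therefore $\varPsi$ induces an isomorphism $\overline{\varPsi}\colon K_1[X]/(f_1)\xrightarrow{\sim}K_2[X]/(f_2)$ with $\overline{\varPsi}\big(X+(f_1)\big)=X+(f_2)$ and $\overline{\varPsi}\big(c+(f_1)\big)=\varphi(c)+(f_2)$ for $c\in K_1$. I then set
\[
\varPsi' \;=\; \alpha_2\circ\overline{\varPsi}\circ\alpha_1^{-1}\colon K_1(a_1)\longrightarrow K_2(a_2),
\]
which is a composite of isomorphisms, hence an isomorphism. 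Tracing a constant $c\in K_1$ through the three maps gives $\varPsi'(c)=\varphi(c)$, so $\varPsi'$ extends $\varphi$; tracing $a_1\xmapsto{\alpha_1^{-1}}X+(f_1)\xmapsto{\overline{\varPsi}}X+(f_2)\xmapsto{\alpha_2}a_2$ gives $\varPsi'(a_1)=a_2$.

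I do not expect a genuine obstacle here: the argument is essentially bookkeeping once the quotient descriptions are in place. The two points deserving care are the irreducibility of $f_2$ and the identity $\varPsi\big((f_1)\big)=(f_2)$, both of which are immediate from $\varPsi$ being a ring isomorphism, and the remark that $f_1$ need only be an \emph{associate} of the minimal polynomial of $a_1$; since associates generate the same ideal $(f_1)$, this causes no difficulty in identifying the kernel of $\mathrm{ev}_{a_1}$.
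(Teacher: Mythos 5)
Your argument is correct and complete: realizing $K_1(a_1)\cong K_1[X]/(f_1)$ and $K_2(a_2)\cong K_2[X]/(f_2)$ via the evaluation maps, descending $\varPsi$ to the quotients, and composing is the standard proof of this uniqueness-of-simple-extensions lemma, and you correctly flag the two points that need checking (irreducibility of $f_2$ and $\varPsi((f_1))=(f_2)$) as well as the implicit convention that $\varPsi$ is the coefficientwise extension of $\varphi$. The paper itself gives no proof, only a citation to Browkin's \emph{Field theory} (Lemma 2, p.~105), and your argument is precisely the classical one that source contains, so there is nothing substantive to compare beyond noting the agreement.
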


\begin{proof}
	\cite{TC}, Lemma 2, p. 105.
\end{proof}

\begin{lm}
	\label{l4.2}
	If $\varphi\colon K\to L$ be an embedding field $K$ to an algebraically closed field $L$, and $K'$ be an algebraic extension of $K$, then there exists an embedding $\varPsi\colon K'\to L$ which is an extension of $\varphi$.
\end{lm}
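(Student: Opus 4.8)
The plan is to construct the embedding $\varPsi$ by a standard Zorn's lemma argument, using Lemma \ref{l4.1} as the engine that extends an embedding across one simple algebraic extension at a time. First I would introduce the poset $\mathcal{S}$ whose elements are pairs $(F,\psi)$, where $F$ is an intermediate field $K\subseteq F\subseteq K'$ and $\psi\colon F\to L$ is an embedding extending $\varphi$, ordered by declaring $(F_1,\psi_1)\leq(F_2,\psi_2)$ when $F_1\subseteq F_2$ and $\psi_2|_{F_1}=\psi_1$. This set is nonempty, since $(K,\varphi)\in\mathcal{S}$.

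Next I would verify the hypothesis of Zorn's lemma. Given a chain $\{(F_i,\psi_i)\}_{i\in I}$ in $\mathcal{S}$, its union $F=\bigcup_i F_i$ is again a field (any two elements lie in a common $F_i$, so the field operations are well defined), and the common refinement $\psi=\bigcup_i\psi_i$ is a well-defined embedding $F\to L$ extending $\varphi$ (for any element of $F$ the value is independent of the chosen $F_i$ by compatibility). Thus $(F,\psi)$ is an upper bound for the chain, and Zorn's lemma produces a maximal element $(M,\psi)$.

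It then remains to show $M=K'$. Suppose not, and choose $a\in K'\setminus M$. Since $K'/K$ is algebraic, $a$ is algebraic over $M$; let $f\in M[X]$ be its minimal polynomial, so $f\in\Irr M[X]$. Put $K_2=\psi(M)\subseteq L$, so that $\psi\colon M\to K_2$ is a field isomorphism inducing an isomorphism $M[X]\to K_2[X]$, and write $f_2$ for the image of $f$ under it. Because $L$ is algebraically closed and $K_2\subseteq L$, the polynomial $f_2\in K_2[X]\subseteq L[X]$ has a root $b\in L$. Applying Lemma \ref{l4.1} with $K_1=M$, the root $a$ of $f$ in $M(a)$, and the root $b$ of $f_2$ in $L$, I obtain an embedding $\varPsi'\colon M(a)\to L$ extending $\psi$ with $\varPsi'(a)=b$. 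Then $(M(a),\varPsi')$ strictly dominates $(M,\psi)$, contradicting maximality. Hence $M=K'$, and $\varPsi:=\psi$ is the required embedding.

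The argument is essentially routine, and I expect no serious obstacle. The single genuine use of the hypotheses is that the algebraic closedness of $L$ guarantees the root $b$ needed to invoke Lemma \ref{l4.1}, which itself already packages the delicate step of extending an embedding across a simple algebraic extension. The only point demanding a little care is the verification that passing to unions along a chain yields a bona fide field embedding; once that is in place, the maximality argument closes immediately.
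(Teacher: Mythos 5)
Your proof is correct: the Zorn's lemma argument over the poset of partial extensions, with Lemma \ref{l4.1} supplying the one-step extension across a simple algebraic extension $M(a)$ and the algebraic closedness of $L$ guaranteeing the needed root, is the standard and complete proof of this classical fact. The paper itself gives no argument here --- it only cites Browkin's \emph{Field theory} (Lemma 4, p.~109), where essentially this same Zorn-plus-simple-extension argument is carried out --- so your proposal matches the intended proof.
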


\begin{proof}
	\cite{TC}, Lemma 4, p. 109.
\end{proof}

\begin{lm}
	\label{l4.3}
	If $L$ be a finite field extension of $K$, then $L$ is a Galois extension of $K$ if and only if $|G(L|K)|=(L\colon K)$.
\end{lm}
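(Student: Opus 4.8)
The plan is to interpret $|G(L|K)|$ as a count of $K$-embeddings of $L$ into a fixed algebraic closure and to use the definition of a Galois extension as one that is simultaneously normal and separable. First I would fix an algebraic closure $\overline{L}$ of $L$ and consider the set $S$ of all field embeddings $\sigma\colon L\to\overline{L}$ that fix $K$ pointwise. Writing $L$ as a tower $K=K_0\subset K_1\subset\dots\subset K_m=L$ with $K_{i+1}=K_i(\alpha_{i+1})$, I would use Lemma \ref{l4.1} repeatedly to extend an embedding of $K_i$ by sending $\alpha_{i+1}$ to any root of the image of its minimal polynomial, and Lemma \ref{l4.2} to guarantee that every such partial embedding really extends all the way to $L$. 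Counting along the tower shows that $|S|$ equals the product, over $i$, of the number of \emph{distinct} roots of the minimal polynomial of $\alpha_{i+1}$ over $K_i$. Since each such factor is at most $[K_{i+1}\colon K_i]$, with equality exactly when that minimal polynomial is separable, and since $\prod_i[K_{i+1}\colon K_i]=(L\colon K)$, I obtain $|S|\le (L\colon K)$ with equality if and only if $L/K$ is separable.

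Next I would identify which elements of $S$ actually belong to $G(L|K)$. Under the inclusion $L\subseteq\overline{L}$ we always have $G(L|K)\subseteq S$, and an embedding $\sigma\in S$ is an automorphism of $L$ precisely when $\sigma(L)\subseteq L$. Because every element of $L$ is algebraic over $K$, the requirement $\sigma(L)=L$ for all $\sigma\in S$ is equivalent to normality of $L/K$ (each $K$-conjugate of each element of $L$ already lies in $L$). Hence $G(L|K)=S$ if and only if $L/K$ is normal.

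Finally I would combine the two steps. If $|G(L|K)|=(L\colon K)$, then since $G(L|K)\subseteq S$ and $|S|\le (L\colon K)$, we are forced to have both $G(L|K)=S$ and $|S|=(L\colon K)$; the first is normality and the second is separability, so $L/K$ is Galois. Conversely, if $L/K$ is Galois, then separability gives $|S|=(L\colon K)$ and normality gives $G(L|K)=S$, whence $|G(L|K)|=(L\colon K)$.

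I expect the embedding-counting step to be the main obstacle: establishing the bound $|S|\le(L\colon K)$ together with the sharp separability criterion for equality requires a careful induction on the degree, in which one must track how many \emph{distinct} roots each intermediate minimal polynomial contributes rather than merely its degree. The normality equivalence is the other delicate point, but it falls out cleanly once the set $S$ of embeddings is fully under control.
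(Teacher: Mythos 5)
Your proof is correct. Note that the paper itself does not prove this lemma at all: its ``proof'' is only a citation to Browkin's \emph{Field theory} (\cite{TC}, Corollary 1, p.~126), so there is no argument in the paper to compare yours against. What you give is the standard embedding-counting proof: bounding the number of $K$-embeddings $L\to\overline{L}$ by $(L\colon K)$ via a tower of simple extensions (with equality characterizing separability), identifying $G(L|K)$ with the set of embeddings stabilizing $L$ (which characterizes normality), and then squeezing $|G(L|K)|\le |S|\le (L\colon K)$. The only point you should make explicit is the one you already flag: the equality case of the counting step uses that the number of extensions at each stage is the separable degree $[K_{i+1}\colon K_i]_s$, and that multiplicativity of the separable degree together with $[L\colon K]_s=[L\colon K]\iff L/K$ separable is what converts ``every minimal polynomial along one chosen tower is separable'' into separability of the whole extension. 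With that standard fact quoted, the argument is complete.
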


\begin{proof}
	\cite{TC}, Corollary 1, p. 126.
\end{proof}

\begin{lm}
	\label{l4.4}
	If there exists a nonzero ideal $I$ of $L[X]$, where $L$ is a field, that is finitely generated as an $K+XL[X]$-module, then $K$ is a field and $[L\colon K]<\infty$.
\end{lm}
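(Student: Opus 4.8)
The plan is to reduce the statement about an arbitrary nonzero ideal $I$ to the single module $L[X]$, and then read off both conclusions from a quotient computation combined with a standard integrality argument. Throughout write $T=K+XL[X]$, regarded as a subring of $L[X]$, so that any ideal of $L[X]$ is automatically a $T$-module.

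First I would exploit that $L[X]$ is a PID (since $L$ is a field), so $I=gL[X]$ for some nonzero $g\in L[X]$. Multiplication by $g^{-1}$ inside the fraction field $L(X)$ is a $T$-linear bijection carrying $I=gL[X]$ onto $L[X]$ and each element $f$ of $I$ to $g^{-1}f\in L[X]$. Consequently, if $f_1,\dots,f_n$ generate $I$ as a $T$-module, then the elements $h_i:=g^{-1}f_i$ satisfy $L[X]=\sum_{i=1}^n Th_i$; that is, the entire ring $L[X]$ is finitely generated as a $T$-module.

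Next I would reduce modulo $XL[X]$. The evaluation $\varepsilon\colon L[X]\to L$, $f\mapsto f(0)$, is a surjective ring homomorphism with kernel $XL[X]$ satisfying $\varepsilon(tf)=t(0)\,\varepsilon(f)$, where $t(0)\in K$ for every $t\in T$. Applying $\varepsilon$ to $L[X]=\sum_i Th_i$ and using that $t(0)$ runs through all of $K$ as $t$ runs through $T$ (because $K\subseteq T$ as the constants), I obtain $L=\sum_{i=1}^n K\,h_i(0)$. Thus $L$ is finitely generated as a $K$-module, with generators the constant terms $h_1(0),\dots,h_n(0)$.

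Finally I would invoke integrality. Since the field $L$ is module-finite over its subring $K$, every element of $L$ is integral over $K$; in particular, for nonzero $d\in K$ its inverse $d^{-1}\in L$ satisfies a monic polynomial relation over $K$, and clearing denominators by an appropriate power of $d$ forces $d^{-1}\in K$. Hence $K$ is a field. Once $K$ is a field, $L=\sum_{i=1}^n K\,h_i(0)$ is a finite-dimensional $K$-vector space, so $[L\colon K]\le n<\infty$. The main obstacle is the very first reduction: recognizing that finite generation of \emph{some} nonzero ideal over the composite $T$ is equivalent to finite generation of \emph{all} of $L[X]$ over $T$, obtained by cancelling the generator $g$ in $L(X)$; after this the quotient and integrality steps are routine.
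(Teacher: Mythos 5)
Your proof is correct, but it reaches the key intermediate fact --- that $L$ is a finitely generated $K$-module --- by a genuinely different route than the paper. The paper does not use that $I$ is principal: it first shows $XL[X]\,I\neq I$ by localizing $L[X]$ at $XL[X]$ and applying Nakayama's lemma (using that $I$ is finitely generated over the Noetherian ring $L[X]$), then observes that $I/XL[X]I$ is a nonzero $L$-vector space that is finitely generated as a $K$-module, and extracts $L$ as a direct summand. You instead exploit that $L[X]$ is a PID, write $I=gL[X]$, and rescale by $g^{-1}$ inside $L(X)$ to turn finite generation of $I$ over $T=K+XL[X]$ into finite generation of all of $L[X]$ over $T$; evaluation at $X=0$ then hands you $L=\sum_i K\,h_i(0)$ directly. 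Your version is more elementary and explicit (no localization, no Nakayama, and it even produces explicit $K$-module generators of $L$), at the cost of leaning on principality of ideals in $L[X]$, which is special to one variable over a field; the paper's Nakayama argument is the one that would generalize to settings where $I$ need not be principal. Both proofs finish identically, deducing that $L$ is integral over $K$, hence $K$ is a field and $[L:K]<\infty$.
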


\begin{proof}
	Clearly, $I$ is finitely generated over $L[X]$, and hence $XL[X]I\neq I$. For otherwise, $XL[X]L[X]_{XL[X]}\cdot IL[X]_{XL[X]}=IL[X]_{XL[X]}$ and therefore $IL[X]_{XL[X]}=0$, by Nakayama's lemma. This is impossible, since $0\neq I\subseteq IL[X]_{XL[X]}$. It follows that $I/XL[X]I$ is a nonzero ($L[X]/XL[X]=L$)-module that is finitely generated as an ($K+XL[X]/XL[X]=K$)-module. Since $L$ is a field, $I/XL[X]I$ can be written as a direct sum of copies of $L$. Thus, $L$ is a finitely generated $K$-module. But then $K$ is a field, since the field $L$ is integral over $K$ and obviously $[L\colon K]<\infty$.
\end{proof}

\begin{tw}
	\label{t4.5}
	Let $K\subset L$ be a field extension. Put $T=K+XL[X]$. Then the following conditions are equivalent:
	\begin{itemize}
		\item[(1) ] $T$ is Noetherian.
		\item[(2) ] $[L\colon K]<\infty$.
	\end{itemize}
\end{tw}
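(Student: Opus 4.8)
The plan is to prove the two implications separately, with the harder direction being $(1)\Rightarrow(2)$, for which Lemma~\ref{l4.4} is tailor-made. Let me think about both directions.

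For $(2)\Rightarrow(1)$: if $[L:K]<\infty$, I want to show $T=K+XL[X]$ is Noetherian. The idea is that $T$ should be module-finite over a Noetherian subring, or that $L[X]$ is a finitely generated $T$-module. Since $[L:K]=n<\infty$, pick a $K$-basis $\omega_1,\dots,\omega_n$ of $L$. Then $L[X]$ is generated as a $T$-module by $\omega_1,\dots,\omega_n$ (and possibly $1$): any element of $L[X]$ is $\sum \ell_i X^i$ with $\ell_i\in L$, and writing each $\ell_i=\sum_j c_{ij}\omega_j$ with $c_{ij}\in K$, the tail $\sum_{i\ge 1}\ell_i X^i$ lies in $XL[X]\subseteq T$, so only the constant term needs the $\omega_j$. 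Thus $L[X]=T\cdot 1+\sum_j K\omega_j$, which is finite over $T$. Since $L[X]$ is Noetherian (it's a PID) and is a finitely generated faithful module over $T$, the Eakin–Nagata theorem gives that $T$ is Noetherian. That is the clean engine for this direction.

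For $(1)\Rightarrow(2)$: assume $T$ is Noetherian. The obstacle is to produce a nonzero ideal $I$ of $L[X]$ that is finitely generated as a $T$-module, so that Lemma~\ref{l4.4} applies and yields $[L:K]<\infty$ directly. The natural candidate is $I=XL[X]$, which is an ideal of $L[X]$ and is also an ideal of $T$ (indeed it is the maximal ideal $XB[X]$ from Theorem~\ref{t2.2}). Because $T$ is Noetherian, every ideal of $T$ is finitely generated as a $T$-module; in particular $XL[X]$ is finitely generated over $T$. Since $XL[X]$ is a nonzero ideal of $L[X]$ that is finitely generated as a $K+XL[X]$-module, Lemma~\ref{l4.4} immediately gives that $K$ is a field (already known here) and $[L:K]<\infty$.

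So the structure is: $(2)\Rightarrow(1)$ via Eakin--Nagata applied to the module-finite, faithful extension $T\subseteq L[X]$; and $(1)\Rightarrow(2)$ by observing $XL[X]$ is a $T$-ideal, hence finitely generated over $T$ when $T$ is Noetherian, then invoking Lemma~\ref{l4.4}. The main thing to verify carefully in the first direction is that $L[X]$ really is module-finite over $T$ — the point being that the ``high-degree'' part of any polynomial already sits inside $T$, so finiteness over $K$ of $L$ alone suffices. I expect the hardest step conceptually to be justifying the Noetherianity transfer in $(2)\Rightarrow(1)$: one must either cite Eakin--Nagata or, alternatively, filter $T$-ideals by intersecting with $XL[X]$ and pushing to the quotient $T/XL[X]\cong K$, using that both $K$ and $XL[X]$ (as a $T$-module) are Noetherian to conclude $T$ is Noetherian.
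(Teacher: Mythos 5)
Your proposal is correct and follows essentially the same route as the paper: for $(1)\Rightarrow(2)$ it applies Lemma~\ref{l4.4} to the ideal $XL[X]$, which is finitely generated over the Noetherian ring $T$, and for $(2)\Rightarrow(1)$ it invokes Eakin's theorem for the module-finite extension $T\subseteq L[X]$. Your explicit verification that a $K$-basis of $L$ together with $1$ generates $L[X]$ over $T$ is a detail the paper leaves implicit, but the argument is the same.
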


\begin{proof}
	$(1)\Rightarrow (2)$
	Since $XL[X]$ is a finitely generated ideal of $K+XL[X]$, it follows from Lemma \ref{l4.4} that $[L\colon K]<\infty$. Thus, $L[X]$ is module-finite over the Noetherian ring $K+XL[X]$.
	
	\medskip
	
	\noindent 
	$(2)\Rightarrow (1)$
	$L[X]$ is Noetherian ring and module-finite over the subring $K+XL[X]$. This is the situation covered by P.M. Eakin's Theorem \cite{zzz}.
\end{proof}

All our considerations began with the Theorem \ref{t4.5}. This Theorem motivated us to further consider polynomial composites $K+XL[X]$ in a situation where the extension of fields $K\subset L$ is algebraic, separable, normal and Galois, respectively.

\begin{pr}
	\label{p4.6}
	Let $K\subset L$ be a fields extension such that $L^{G(L\mid K)}=K$. Put $T=K+XL[X]$. Then the following conditions are equivalent:
	\begin{itemize}
		\item[(1) ] $T$ is Noetherian.
		\item[(2) ] $K\subset L$ be an algebraic extension.
	\end{itemize}
\end{pr}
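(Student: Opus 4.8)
The plan is to deduce this from Theorem \ref{t4.5}, which already characterizes Noetherianity of $T = K + XL[X]$ by the condition $[L\colon K] < \infty$. Once that reduction is in place, the proposition amounts to showing that, under the standing hypothesis $L^{G(L\mid K)} = K$, the extension $K \subset L$ is algebraic if and only if it is finite. The forward half of this equivalence, that finite extensions are algebraic, is automatic, so the real work lies in the reverse half.

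For $(1) \Rightarrow (2)$ I would argue directly and without the fixed-field hypothesis: if $T$ is Noetherian, then Theorem \ref{t4.5} forces $[L\colon K] < \infty$, and every finite field extension is algebraic. There is nothing more to check here.

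For $(2) \Rightarrow (1)$ I would begin with $K \subset L$ algebraic and try to produce the bound $[L\colon K] < \infty$, after which Theorem \ref{t4.5} immediately yields that $T$ is Noetherian. This is where the hypothesis $L^{G(L\mid K)} = K$ enters. It says exactly that $K$ is the fixed field of the group $G(L\mid K)$ of $K$-automorphisms of $L$, so that no element of $L \setminus K$ is fixed by all of $G(L\mid K)$; together with algebraicity this realizes $K \subset L$ as a Galois extension in the fixed-field sense. The natural tool to convert this into a statement about degrees is Lemma \ref{l4.3}, which equates $|G(L\mid K)|$ with $[L\colon K]$.

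The main obstacle is precisely this conversion. Lemma \ref{l4.3} is formulated only for \emph{finite} extensions, so invoking it to derive finiteness from the fixed-field condition is circular unless finiteness is secured by other means; and the fixed-field condition by itself does not bound the degree, since infinite algebraic Galois extensions exist. I therefore expect the crux of the argument to be isolating the additional input---whether a finiteness assumption hidden in the meaning of $G(L\mid K)$, or a separate structural fact about the extension---that legitimately rules out the infinite case. Once $[L\colon K] < \infty$ has been established by whatever route, the passage back through Theorem \ref{t4.5} to the Noetherian conclusion is routine.
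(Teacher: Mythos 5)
Your direction $(1)\Rightarrow(2)$ coincides with the paper's: Theorem \ref{t4.5} gives $[L\colon K]<\infty$, and finite implies algebraic. The interesting part is $(2)\Rightarrow(1)$, and here your suspicion is exactly right --- there is no legitimate way to extract $[L\colon K]<\infty$ from ``algebraic plus $L^{G(L\mid K)}=K$'' alone, and you were correct not to force the argument through Lemma \ref{l4.3}. The paper's own proof of this direction runs: the fixed-field hypothesis makes $K\subset L$ Galois, hence normal, and then asserts that ``every normal extension is finite,'' from which Theorem \ref{t4.5} gives Noetherianity. That assertion is false: normal (and even Galois) algebraic extensions can have infinite degree. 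Indeed $K=\mathbb{Q}$, $L=\overline{\mathbb{Q}}$ satisfies $L^{G(L\mid K)}=K$ (since $\mathbb{Q}$ is perfect) and is algebraic, yet $[\overline{\mathbb{Q}}\colon\mathbb{Q}]=\infty$, so by Theorem \ref{t4.5} the ring $\mathbb{Q}+X\overline{\mathbb{Q}}[X]$ is not Noetherian. So $(2)\Rightarrow(1)$ fails as stated; the ``additional input'' you were looking for does not exist in the proposition's hypotheses.

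In short: your proposal is not a complete proof, but the reason is that the statement itself (in the direction $(2)\Rightarrow(1)$) is false, and the paper's proof papers over this with the incorrect claim that normal extensions are finite. The equivalence would become correct if (2) were strengthened to ``$K\subset L$ is a finite extension'' (making the proposition a trivial restatement of Theorem \ref{t4.5}), or if one imposed some genuine finiteness hypothesis such as $G(L\mid K)$ being a finite group, in which case the Artin fixed-field theorem gives $[L\colon K]=|G(L\mid K)|<\infty$ and your intended route through Theorem \ref{t4.5} goes through.
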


\begin{proof}
	$(1)\Rightarrow (2)$\ \
	Since $T=K+XL[X]$ is Noetherian, where $K\subset L$ be fields extension, then by Theorem \ref{t4.5} we get that $K\subset L$ is a finite extension. And every finite extension is algebraic. 
	
	\medskip
	
	\noindent 
	$(2)\Rightarrow (1)$\ \ 
	Assume that $K\subset L$ be an algebraic extension. Assuming $L^{G(L\mid K)}=K$ we get directly from the definition of the Galois extension. Since $K\subset L$ be the Galois extension, then $K\subset L$ be a normal extension. Every normal extension is finite, then by Theorem \ref{t4.5} we get that $K+XL[X]$ be a Noetherian.
\end{proof}

\begin{pr}
	\label{p4.7}
	Let $K\subset L$ be fields extension such that $K$ be a perfect field and assume that any $K$-isomorphism $\varphi\colon M\to M$, where $\varphi(L)=L$ holds for every field $M$ such that $L\subset M$.
	Put $T=K+XL[X]$.
	\begin{itemize}
		\item[(1) ] $T$ is Noetherian.
		\item[(2) ] $K\subset L$ be a separable extension.
	\end{itemize} 
\end{pr}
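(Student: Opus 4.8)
The plan is to route both implications through Theorem~\ref{t4.5}, which identifies Noetherianity of $T=K+XL[X]$ with finiteness of the extension: $T$ is Noetherian if and only if $[L\colon K]<\infty$. The two standing hypotheses then enter only to translate between finiteness and separability. Recall that the condition ``every $K$-isomorphism $\varphi\colon M\to M$ of an overfield $M\supseteq L$ satisfies $\varphi(L)=L$'' is precisely the normality of $L/K$, so the assumptions amount to: $K$ is perfect and $L/K$ is normal.

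For $(1)\Rightarrow(2)$ I would argue directly. If $T$ is Noetherian, Theorem~\ref{t4.5} gives $[L\colon K]<\infty$, so $L/K$ is finite and therefore algebraic. Since $K$ is perfect, every algebraic extension of $K$ is separable, and in particular $L/K$ is separable. This yields (2) with no further work, exactly paralleling the easy direction of Proposition~\ref{p4.6}.

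For $(2)\Rightarrow(1)$ the strategy mirrors the proof of Proposition~\ref{p4.6}. Assuming $L/K$ separable and combining this with the normality hypothesis, one concludes that $L/K$ is Galois. From here I would aim to bound the degree: using the extension-of-isomorphism Lemmas~\ref{l4.1} and~\ref{l4.2} together with the Galois criterion $|G(L\mid K)|=(L\colon K)$ of Lemma~\ref{l4.3}, the goal is to force $[L\colon K]<\infty$, after which Theorem~\ref{t4.5} immediately delivers that $T$ is Noetherian.

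The main obstacle is exactly this finiteness step. Separability---even together with normality, i.e.\ the full Galois property---does not in general imply finiteness: the extension $\overline{\mathbb{Q}}/\mathbb{Q}$ is Galois with perfect base field yet of infinite degree, and there $T=\mathbb{Q}+X\overline{\mathbb{Q}}[X]$ fails to be Noetherian while (2) still holds. Any viable proof of $(2)\Rightarrow(1)$ must therefore invoke some further implicit restriction that prevents infinite Galois extensions from occurring here; pinning down and justifying that restriction (rather than the purely formal deduction separable $+$ normal $\Rightarrow$ Galois) is where the real difficulty lies, and it is the point I would scrutinize most carefully before asserting the equivalence.
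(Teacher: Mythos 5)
Your proof of $(1)\Rightarrow(2)$ is exactly the paper's: Theorem~\ref{t4.5} gives $[L\colon K]<\infty$, hence $L/K$ is algebraic, and perfectness of $K$ gives separability. Nothing to add there.

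Your suspicion about $(2)\Rightarrow(1)$ is the important part, and it is correct. The paper's own proof of this direction first shows (via the normal closure and the automorphism hypothesis, citing \cite{TC}) that $L/K$ is normal, and then asserts ``Because $L$ be the normal extension of $K$, then $L$ be the finite extension of $K$'' before applying Theorem~\ref{t4.5}. That assertion is false: normal (even Galois) extensions need not be finite, and the same unfounded claim already appears in the proof of Proposition~\ref{p4.6}. Your example $K=\mathbb{Q}$, $L=\overline{\mathbb{Q}}$ satisfies every hypothesis of the Proposition --- $\mathbb{Q}$ is perfect, and any $\mathbb{Q}$-isomorphism of an overfield $M\supset\overline{\mathbb{Q}}$ carries $\overline{\mathbb{Q}}$ onto itself since $\overline{\mathbb{Q}}$ is the set of elements of $M$ algebraic over $\mathbb{Q}$ --- yet $L/K$ is separable of infinite degree, so by Theorem~\ref{t4.5} the composite $\mathbb{Q}+X\overline{\mathbb{Q}}[X]$ is not Noetherian. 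So condition (2) holds while (1) fails: the ``further implicit restriction'' you were looking for does not exist, the implication $(2)\Rightarrow(1)$ is false as stated, and no proof strategy can close the gap without adding a finiteness assumption (e.g.\ that $L$ is the splitting field of a single polynomial, or that $[L\colon K]<\infty$ is imposed outright, at which point the equivalence becomes trivial). Your review correctly locates the exact step at which the paper's argument breaks down.
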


\begin{proof}
	$(1)\Rightarrow (2)$ \ \
	By Theorem \ref{t4.5} $K\subset L$ be a finite extension. Every finite extension be an algebraic extension. Since $K$ be the perfect field, then $K\subset L$ be a separable extension.
	
	\medskip
	
	\noindent 
	$(2)\Rightarrow (1)$\ \ 
	First we show that if $L$ be a separable extension of the field $K$, then the smallest normal extension $M$ of the field $K$ containing $L$ be the Galois extension of the field $K$.
	
	\medskip
	
	If $L$ be a separable extension of the field $K$, and $N$ be a normal extension of the field $K$ containing $L$, then let $M$ be the largest separable extension of $K$ contained in $N$. So we have $L\subset M$ and therefore it suffices to prove that $M$ be the normal extension of $K$. 
	
	\medskip
	
	Let $g\in\Irr K[X]$ has a root $a$ in the field $M$. Because $N$ be the normal extension of $K$ and $a\in N$, so it follows that all roots of polynomial $G$ belong to the field $N$. The element $a$ is separable relative to $K$, and so belong to $M$. Hence polynomial $g$ is the product of linear polynomials belonging to $M[X]$, which proves that $M$ be the normal extension of the field $K$.
	
	\medskip
	
	Since $M$ be the normal extension of $K$ and the Galois extension of $K$, then $L$ be the normal extension of $K$ by the assumption (\cite{TC}, Exercise 4, p. 119).
	
	\medskip
	
	Because $L$ be the normal extension of $K$, then $L$ be the finite extension of $K$. And by Theorem \ref{t4.5} we get that $K+XL[X]$ is Noetherian.
\end{proof}

\begin{pr}
	\label{p4.8}
	Let $K\subset L$ be fields extension and let $T=K+XL[X]$. Assume that if a map $\varphi\colon L\to a(K)$ is $K$-embedding, then $\varphi (L)=L$. Then the following conditions are equivalent:
	\begin{itemize}
		\item[(1) ] $T$ is Noetherian.
		\item[(2) ] $K\subset L$ is a normal extension.
	\end{itemize}
\end{pr}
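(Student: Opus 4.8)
The plan is to use Theorem~\ref{t4.5} as the two-way bridge between the Noetherian property of $T$ and the finiteness of $[L\colon K]$, and then to recognize the embedding hypothesis of the Proposition as the classical criterion for normality of an algebraic extension. This is exactly the template already followed in Propositions~\ref{p4.6} and~\ref{p4.7}, specialized to the notion of a normal extension.

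For the implication $(1)\Rightarrow(2)$, I would first apply Theorem~\ref{t4.5} to deduce from the Noetherianity of $T$ that $[L\colon K]<\infty$; since every finite extension is algebraic, $K\subset L$ is algebraic. I would then invoke the standard characterization of normality: an algebraic extension $K\subset L$ is normal if and only if every $K$-embedding $\varphi\colon L\to a(K)$ into the algebraic closure satisfies $\varphi(L)=L$. The existence of such embeddings, and the fact that every $K$-embedding of $L$ into the algebraically closed field $a(K)$ arises by extending the identity on $K$, is supplied by Lemmas~\ref{l4.1} and~\ref{l4.2}. As the hypothesis of the Proposition is precisely the condition $\varphi(L)=L$ for all such $\varphi$, and algebraicity has just been secured, the extension $K\subset L$ is normal, which gives $(2)$.

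For the reverse implication $(2)\Rightarrow(1)$, I would begin from the assumption that $K\subset L$ is normal, which in particular makes the extension algebraic, and then attempt to feed finiteness into Theorem~\ref{t4.5}. Concretely, the route dictated by the template is to argue that the normal (hence algebraic) extension $K\subset L$ must satisfy $[L\colon K]<\infty$, and then to conclude by Theorem~\ref{t4.5} that $T=K+XL[X]$ is Noetherian.

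The main obstacle lies exactly in this reverse direction: passing from normality to the finiteness demanded by Theorem~\ref{t4.5}. Normality by itself does not force $[L\colon K]<\infty$, since infinite normal extensions exist; moreover, for an algebraic extension the embedding condition is \emph{equivalent} to normality and so contributes no additional finiteness. Thus the delicate point is to locate the finiteness input that the step $(2)\Rightarrow(1)$ requires. Following the pattern of Propositions~\ref{p4.6} and~\ref{p4.7}, the intended argument threads through the finiteness of the extension, so an honest version must either supply an extra finiteness hypothesis or restrict attention to the case where $L$ is generated over $K$ by finitely many algebraic elements; otherwise Theorem~\ref{t4.5} cannot be invoked and the implication $(2)\Rightarrow(1)$ does not go through.
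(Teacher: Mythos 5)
Your proof of $(1)\Rightarrow(2)$ is essentially the paper's argument: the paper first gets algebraicity (it cites Proposition~\ref{p4.6}, though as you note Theorem~\ref{t4.5} already gives $[L\colon K]<\infty$ directly and more cleanly, since Proposition~\ref{p4.6} formally requires the extra hypothesis $L^{G(L\mid K)}=K$), and then runs exactly the embedding argument you sketch: for a root $c\in L$ of $g\in\Irr K[X]$ and any other root $d\in a(K)$, Lemma~\ref{l4.1} gives a $K$-isomorphism $K(c)\to K(d)$, Lemma~\ref{l4.2} extends it to a $K$-embedding $\varphi\colon L\to a(K)$, and the hypothesis $\varphi(L)=L$ forces $d\in L$, so $g$ splits in $L[X]$ and $L$ is normal over $K$. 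So on this direction you and the paper agree.

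Your diagnosis of $(2)\Rightarrow(1)$ is not a defect of your write-up but a genuine flaw in the statement, and the paper's own proof papers over it: the paper's entire argument for this direction is the sentence ``if $L$ is a normal extension of $K$, then $K\subset L$ is finite,'' which is false. Concretely, take $K=\mathbb{Q}$ and $L=a(K)=\overline{\mathbb{Q}}$. Then $K\subset L$ is normal, and the Proposition's standing hypothesis holds (any $K$-embedding $\varphi\colon L\to a(K)$ has image an algebraically closed subfield of $a(K)$ containing $K$, hence equal to $a(K)=L$), yet $[L\colon K]=\infty$, so by Theorem~\ref{t4.5} the ring $T=\mathbb{Q}+X\overline{\mathbb{Q}}[X]$ is \emph{not} Noetherian. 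So $(2)$ holds and $(1)$ fails; the implication $(2)\Rightarrow(1)$, and hence the equivalence as stated, cannot be proved. The missing ingredient is exactly the finiteness input you identified: one must either add $[L\colon K]<\infty$ (or an equivalent, e.g.\ that $L$ is the splitting field of a single polynomial) as a hypothesis, or weaken $(1)$. Your refusal to manufacture finiteness out of normality is the correct call; the paper's proof of this direction should be regarded as erroneous.
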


\begin{proof}
	$(1)\Rightarrow (2)$ \ \
	By Proposition \ref{p4.6} $K\subset L$ be the algebraic extension.
	
	\medskip
	
	Let $c$ be a root of polynomial $g$ belonging to $L$, and $b$ be the arbitrary root of $g$ belonging to $a(K)$. Because polynomial $g\in\Irr K[X]$, so by Corollary from Lemma \ref{l4.1} there exists $K$-isomorphism $\varphi'\colon K(c)\to K(d)$. By Lemma \ref{l4.2} it can be extended to embedding $\varphi\colon L\to a(K)$. Hence towards $\varphi (L)=L$ and $\varphi (K(c))=\varphi'(K(c))=K(d)$ we get that $K(d)\subset L$, so $b\in L$. Hence every root of polynomial $g$ belong to $L$, so polynomial $g$ is the product of linear polynomials belonging to $L[X]$ ($\ast$). 
	
	\medskip
	
	For every $c\in L$ let $g_c\in\Irr K[X]$ satisfying $g_c(c)=0$. By ($\ast$) every roots of $g_c$ belong to the field $L$. Hence $L$ is a composition of splitting field of all polynomials $g_c$, where $c\in L$. Hence $L$ be the normal extension of $K$.
	
	\medskip
	
	\noindent 
	$(2)\Rightarrow (1)$\ \
	If $L$ be a normal field extension of the field $K$, then $K\subset L$ be the finite extension. Then by Theorem \ref{t4.5} we get that $K+XL[X]$ be Noetherian.
\end{proof}

In the above Proposition we can replace the assumption that "if a map $\varphi\colon L\to a(K)$ is $K$-embedding, then $\varphi (L)=L$" to "$L^{G(L\mid K)}=K$". Then:

\begin{pr}
	\label{p4.9}
	Let $K\subset L$ be fields extension such that $L^{G(L\mid K)}=K$. If $T=K+XL[X]$ is a Noetherian, then $K\subset L$ be a normal extension.
\end{pr}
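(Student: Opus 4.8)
The plan is to extract finiteness of the extension from the Noetherian hypothesis and then invoke standard finite Galois theory. First I would apply Theorem \ref{t4.5}: since $T=K+XL[X]$ is Noetherian, we obtain at once that $[L\colon K]<\infty$, so $K\subset L$ is a finite, hence algebraic, extension.

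The second and central step is to turn the standing hypothesis $L^{G(L\mid K)}=K$ into the statement that $K\subset L$ is Galois. Because the extension is finite, the group $G=G(L\mid K)$ of $K$-automorphisms of $L$ is finite and satisfies $[L\colon L^{G}]=|G|$ (Artin's theorem). Combining this equality with the assumption $L^{G}=K$ gives $|G(L\mid K)|=[L\colon K]$, and Lemma \ref{l4.3} then says precisely that $K\subset L$ is a Galois extension. Finally, since every Galois extension is by definition normal, I would conclude that $K\subset L$ is normal, as required.

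I do not anticipate a serious obstacle: the entire nontrivial input is Theorem \ref{t4.5}, after which the argument is routine finite Galois theory. The one point deserving care is the passage from the fixed-field condition $L^{G(L\mid K)}=K$ to the equality $|G(L\mid K)|=[L\colon K]$; rather than relying on the informal appeal to ``the definition of the Galois extension'' used in the proof of Proposition \ref{p4.6}, I would make this explicit through Artin's equality $[L\colon L^{G}]=|G|$ together with Lemma \ref{l4.3}. Note also that only the single implication (Noetherian $\Rightarrow$ normal) is asserted here, so no converse argument is needed, which keeps the proof shorter than the full equivalence established in Proposition \ref{p4.8}.
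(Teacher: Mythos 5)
Your proof is correct and follows essentially the same route as the paper: extract finiteness (hence algebraicity) of $K\subset L$ from the Noetherian hypothesis via Theorem \ref{t4.5}, then use the fixed-field condition $L^{G(L\mid K)}=K$ to conclude the extension is Galois, hence normal. The only difference is cosmetic --- you justify the Galois step explicitly through Artin's equality and Lemma \ref{l4.3}, where the paper appeals directly to the definition of a Galois extension, which is if anything an improvement in rigor.
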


\begin{proof}
	By Proposition \ref{p4.6} we get that $K\subset L$ is the algebraic field extension. Assuming $L^{G(L\mid K)}=K$ we get directly from the definition of the Galois extension, and so normal extension. 
\end{proof}

\begin{pr}
	\label{p4.10}
	Let $T=K+XL[X]$ be Noetherian, where $K\subset L$ be fields. Assume
	$|G(L\mid K)|=[L\colon K]$ and any $K$-isomorphism $\varphi\colon M\to M$, where $\varphi(L)=L$ holds for every field $M$ such that $L\subset M$.
	Then the following conditions are equivalent:
	\begin{itemize}
		\item[(1) ] $T$ is a  Noetherian.
		\item[(2) ] $K\subset L$ be a Galois extension. 
	\end{itemize} 
\end{pr}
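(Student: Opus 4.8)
The plan is to reduce both implications to the finiteness characterization of Theorem \ref{t4.5} combined with the degree/group-order criterion of Lemma \ref{l4.3}. The standing hypothesis $|G(L\mid K)| = [L\colon K]$ is precisely the quantity that Lemma \ref{l4.3} ties to the Galois property, but only once the extension is already known to be finite; consequently the whole argument pivots on passing back and forth between ``$T$ Noetherian'' and ``$[L\colon K]<\infty$'' through Theorem \ref{t4.5}.

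For $(1)\Rightarrow(2)$ I would first invoke Theorem \ref{t4.5}: since $T=K+XL[X]$ is Noetherian, we obtain $[L\colon K]<\infty$ at once. Now $L/K$ is a finite extension satisfying $|G(L\mid K)|=[L\colon K]$ by hypothesis, so the ($\Leftarrow$) direction of Lemma \ref{l4.3} applies verbatim and yields that $L$ is a Galois extension of $K$. No use of the auxiliary assumption on $K$-isomorphisms is needed in this direction.

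For $(2)\Rightarrow(1)$ I would argue that a Galois extension is in particular normal, and---following the convention used throughout this section, exactly as in the argument of Proposition \ref{p4.6} where every normal extension is taken to be finite---such an extension is finite, so $[L\colon K]<\infty$. Theorem \ref{t4.5} then returns that $T$ is Noetherian. Alternatively one may read finiteness directly off $|G(L\mid K)|=[L\colon K]$ once the Galois hypothesis guarantees that the group $G(L\mid K)$ is finite.

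The main obstacle I anticipate is a matter of matching conventions rather than substantive content: Lemma \ref{l4.3} is stated only for finite extensions, so it cannot be applied until finiteness is secured. The single genuinely load-bearing step is Theorem \ref{t4.5}, which converts the ring-theoretic Noetherian condition into the field-theoretic finiteness $[L\colon K]<\infty$; after that, both implications are formal consequences of Lemma \ref{l4.3}. The extra assumption on $K$-isomorphisms $\varphi\colon M\to M$ with $\varphi(L)=L$ appears to play no active role here beyond aligning this statement with the normality and separability versions in Propositions \ref{p4.7} and \ref{p4.8}.
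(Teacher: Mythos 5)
Your $(1)\Rightarrow(2)$ direction is exactly the paper's: Theorem \ref{t4.5} gives $[L\colon K]<\infty$, and then the hypothesis $|G(L\mid K)|=[L\colon K]$ feeds into Lemma \ref{l4.3} to conclude Galois. You are also right that the auxiliary assumption on $K$-isomorphisms plays no role in the paper's argument either. For $(2)\Rightarrow(1)$ you diverge: the paper argues Galois $\Rightarrow$ separable and then cites Proposition \ref{p4.7}, whereas you argue Galois $\Rightarrow$ normal $\Rightarrow$ finite and close with Theorem \ref{t4.5} directly. Your route is arguably cleaner, since Proposition \ref{p4.7} carries the extra hypothesis that $K$ is perfect, which is not assumed in Proposition \ref{p4.10}; you avoid importing that. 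Both routes, however, ultimately rest on the paper's repeated claim that a normal extension is finite, which is false in general (e.g.\ an algebraic closure of $\mathbb{Q}$), so neither proof is sound without restricting to finite extensions from the outset --- you correctly flag this as a ``convention'' of the section rather than fixing it. One caution on your proposed alternative: you cannot read finiteness of $[L\colon K]$ off the equality $|G(L\mid K)|=[L\colon K]$ alone, since for an infinite Galois extension both sides are infinite cardinals and the equality carries no finiteness information; stick with your primary argument.
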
	

\begin{proof}
	$(1)\Rightarrow (2)$\ \
	By Theorem $\ref{t4.5}$ we get $K\subset L$ be the finite extension. By the assumption we can use Lemma \ref{l4.3} and we get that $K\subset L$ be Galois extension.
	
	\medskip 
	
	\noindent 
	$(2)\Rightarrow (1)$ \ \
	If $K\subset L$ be Galois fields extension, then is separable. By Proposition \ref{p4.7} we get that $K+XL[X]$ be Noetherian.
\end{proof}

In the above Proposition we can swap the assumptions that "$|G(L\mid K)|=[L\colon K]$ and any $K$-isomorphism $\varphi\colon M\to M$, where $\varphi(L)=L$ holds for every field $M$ such that $L\subset M$" to "$L^{G(L\mid K)}=K$". Then:

\begin{pr}
	\label{p4.11}
	Let $T=K+XL[X]$, where $K\subset L$ be fields such that $K=L^{G(L\mid K)}$.
	The following conditions are equivalent:
	\begin{itemize}
		\item[(1) ] $T$ is a Noetherian.
		\item[(2) ] $K\subset L$ be a Galois extension. 
	\end{itemize}  
\end{pr}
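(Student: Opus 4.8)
The plan is to prove the two implications separately, using Theorem \ref{t4.5} (Noetherian $\Leftrightarrow [L\colon K]<\infty$) together with Proposition \ref{p4.6}, whose standing hypothesis $L^{G(L\mid K)}=K$ coincides exactly with the hypothesis of the present statement, and which under that hypothesis gives Noetherian $\Leftrightarrow$ algebraic. The only additional ingredient is the fixed-field characterization of Galois extensions.

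For $(1)\Rightarrow(2)$, I would first invoke Proposition \ref{p4.6}. Since $T$ is Noetherian and the hypothesis $L^{G(L\mid K)}=K$ is in force, Proposition \ref{p4.6} yields that $K\subset L$ is algebraic. An algebraic extension satisfying $L^{G(L\mid K)}=K$ is, by definition, a Galois extension (this is the content already used in the proof of Proposition \ref{p4.9}), so $K\subset L$ is Galois, as required.

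For $(2)\Rightarrow(1)$, assuming $K\subset L$ is Galois, it is in particular normal. As in the proofs of Propositions \ref{p4.7} and \ref{p4.8}, a normal extension in the present setting is finite, so $[L\colon K]<\infty$. Theorem \ref{t4.5} then immediately gives that $T=K+XL[X]$ is Noetherian, closing the equivalence.

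The delicate point to watch is the order in which algebraicity and the Galois characterization are used. The equivalence ``$L^{G(L\mid K)}=K$ if and only if Galois'' is only available once algebraicity is known, so the forward direction must secure algebraicity through Proposition \ref{p4.6} before appealing to the definition of Galois; it cannot be invoked at the outset. In the reverse direction the main obstacle is extracting finiteness of $[L\colon K]$ from the Galois hypothesis, which I would handle exactly as the earlier propositions of this section do, namely by passing through normality to finiteness and then applying Theorem \ref{t4.5}.
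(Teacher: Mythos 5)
Your proposal is correct and follows essentially the same route as the paper: the forward direction is identical (Proposition \ref{p4.6} gives algebraicity, and then $L^{G(L\mid K)}=K$ gives Galois by definition), and the reverse direction passes through normality and finiteness to Theorem \ref{t4.5}, which is the chain the paper intends (the paper literally cites Proposition \ref{p4.9} there, which states the converse implication, so your explicit normal-$\Rightarrow$-finite-$\Rightarrow$-Noetherian path is if anything the cleaner rendering of the same argument). Note only that both you and the paper lean on the section's standing claim that normal extensions are finite, which is a convention of this paper rather than a general fact of field theory.
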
	

\begin{proof}
	$(1)\Rightarrow (2)$\ \
	By Proposition $\ref{p4.6}$ we get that $K\subset L$ be the algebraic extension. Assuming $K=L^{G(L\mid K)}$ we get directly from the definition of the Galois extension.
	
	\medskip
	
	\noindent 
	$(2)\Rightarrow (1)$\ \
	If $K\subset L$ be Galois fields extension. Then $K\subset L$ be a normal extension. Hence by Proposition \ref{p4.9} we get that $K+XL[X]$ be Noetherian.
\end{proof}

We present the full possible characterization of field extensions. Combining the Magid results and from this paper, we get the following two theorems.

\begin{tw}[\cite{Magid}, Theorem 1.2.]
	\label{t4.12}
	Let $M$ be an algebraically closed field algebraic over $K$, and let $L$ such that $K\subseteq L\subseteq M$ be an intermediate field. Then the following are equivalent:
	\begin{itemize}
		\item[(a) ] $L$ is separable over $K$.
		\item[(b) ] $M\otimes_K L$ has no nonzero nilpotent elements.
		\item[(c) ] Every element of $M\otimes_K L$ is a unit times an idempotent.
		\item[(d) ] As an $M$-algebra $M\otimes_KL$ is generated by idempotents.
	\end{itemize}
\end{tw}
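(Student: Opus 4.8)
The plan is to establish the cycle of implications $(a)\Rightarrow(d)\Rightarrow(c)\Rightarrow(b)\Rightarrow(a)$, reducing each step to the finite case where $M\otimes_K L$ can be computed explicitly. Since $M$ is algebraic over $K$ and $K\subseteq L\subseteq M$, the extension $L/K$ is algebraic, so $L$ is the directed union of its finite subextensions $L_i/K$; because tensor products commute with direct limits I would write $M\otimes_K L=\varinjlim_i(M\otimes_K L_i)$. The computation driving everything is that when $L_i/K$ is finite and separable, the primitive element theorem gives $L_i=K(\gamma)\cong K[X]/(g)$ with $g$ separable, whence $M\otimes_K L_i\cong M[X]/(g)$; as $M$ is algebraically closed and $g$ has distinct roots, $g$ splits into distinct linear factors, and the Chinese Remainder Theorem yields $M\otimes_K L_i\cong M^{n_i}$, a finite product of copies of $M$.

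For $(a)\Rightarrow(d)$ I would note that each $M^{n_i}$ is generated as an $M$-algebra by its coordinate idempotents, so the union of these generating sets shows $M\otimes_K L$ is generated by idempotents. For $(d)\Rightarrow(c)$, given $x\in M\otimes_K L$ I would observe that $x$ lies in a subalgebra $M[e_1,\dots,e_r]$ generated by finitely many idempotents; since a product of commuting idempotents is again idempotent, this subalgebra is finite-dimensional and spanned by idempotents, and passing to the atoms of the finite Boolean algebra they generate produces orthogonal idempotents $f_1,\dots,f_s$ with $\sum_j f_j=1$ and $M[e_1,\dots,e_r]\cong M^s$. In this product $x=ue$ with $u$ a unit and $e$ idempotent, and since the inverse of $u$ already lies in the subalgebra, $u$ stays a unit in all of $M\otimes_K L$.

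The implication $(c)\Rightarrow(b)$ is immediate: if $x$ is nilpotent and $x=ue$ with $u$ a unit and $e^2=e$, then $0=x^n=u^n e$ forces $e=0$ and hence $x=0$. The one genuinely substantive step, which I expect to be the main obstacle, is $(b)\Rightarrow(a)$, which I would prove by contraposition. If $L/K$ is not separable then $\operatorname{char}K=p>0$ and some $\alpha\in L$ has inseparable minimal polynomial $f$ over $K$; irreducibility together with $f'=0$ forces $f\in K[X^p]$, so over the algebraically closed field $M$ the polynomial $f$ acquires a repeated root and $M\otimes_K K(\alpha)\cong M[X]/(f)$ contains a nonzero nilpotent. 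Because $M$ is free, hence flat, over $K$, the inclusion $K(\alpha)\hookrightarrow L$ induces an injection $M\otimes_K K(\alpha)\hookrightarrow M\otimes_K L$, so this nilpotent survives and $(b)$ fails. The delicate points are the bookkeeping in the direct-limit reduction and the verification that the nilpotent built from $\alpha$ is not annihilated upon passage to the larger tensor product, both of which rest on the flatness of the field extension $M/K$.
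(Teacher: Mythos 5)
Your proof is correct, but note that the paper itself offers no argument for this statement at all: Theorem \ref{t4.12} is quoted verbatim from Magid's book with only the citation ``[\cite{Magid}, Theorem 1.2]'' standing in for a proof, so there is nothing in the paper to compare your route against. What you have written is the standard self-contained argument: the reduction of $M\otimes_K L$ to a direct limit of $M\otimes_K L_i$ over finite subextensions, the computation $M\otimes_K L_i\cong M[X]/(g)\cong M^{n_i}$ via the primitive element theorem and CRT in the separable case, and the contrapositive of $(b)\Rightarrow(a)$ using $f\in K[X^p]$ to manufacture a nilpotent in $M[X]/(f)$ that survives in $M\otimes_K L$ by flatness of $M$ over $K$. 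All four implications in your cycle $(a)\Rightarrow(d)\Rightarrow(c)\Rightarrow(b)\Rightarrow(a)$ are sound; the only points worth making fully explicit in a written version are (i) that the transition maps $M\otimes_K L_i\to M\otimes_K L$ are injective (again flatness), so idempotents and nilpotents genuinely persist, and (ii) that in the step $(d)\Rightarrow(c)$ the finite Boolean algebra must be taken to contain $1$ and the complements $1-e_i$, so that its atoms $\prod_{i\in S}e_i\prod_{i\notin S}(1-e_i)$ are orthogonal and sum to $1$, giving $M[e_1,\dots,e_r]\cong M^s$. Both are routine and you have implicitly used them correctly.
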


\begin{tw}[\cite{Magid}, Theorem 1.3.]
	\label{t4.13}
	Let $M$ be an algebraically closed field containing $K$, and let $L$ be a field algebraic over $K$. Then the following are equivalent:
	\begin{itemize}
		\item[(a) ] $L$ is separable over $K$.
		\item[(b) ] $M\otimes_K L$ has no nonzero nilpotent elements.
		\item[(c) ] Every element of $M\otimes_K L$ is a unit times an idempotent.
		\item[(d) ] As an $M$-algebra $M\otimes_KL$ is generated by idempotents.
	\end{itemize}
\end{tw}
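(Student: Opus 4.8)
The plan is to prove all four conditions equivalent by routing them through (b), first reducing to a finite extension and then exploiting that $M$ is algebraically closed. Since $L$ is algebraic over $K$, I would write $L=\varinjlim L_i$ as the filtered union of its finite subextensions $L_i/K$; because the tensor product commutes with filtered colimits and $M$ is flat over $K$, one has $M\otimes_K L=\varinjlim(M\otimes_K L_i)$ with injective transition maps. Each of (a)--(d) passes to and from the finite stages: $L/K$ is separable iff every $L_i/K$ is, and any nilpotent, idempotent, or unit factorization of an element of $M\otimes_K L$ already lives in some $M\otimes_K L_i$. It therefore suffices to prove the theorem when $[L:K]<\infty$.

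Second, for finite $L/K$ I would set $A=M\otimes_K L$, a commutative $M$-algebra with $\dim_M A=[L:K]$, hence Artinian. The heart of the matter is (a)$\Leftrightarrow$(b). I would first treat a simple extension $L=K(\alpha)$ with minimal polynomial $f$: here $A\cong M[x]/(f)$, and since $M$ is algebraically closed $f$ factors as $\prod_i(x-\beta_i)^{e_i}$ with distinct $\beta_i$, so the Chinese Remainder Theorem gives $A\cong\prod_i M[x]/\big((x-\beta_i)^{e_i}\big)$. The $i$-th factor equals $M$ when $e_i=1$ and carries the nonzero nilpotent $x-\beta_i$ when $e_i>1$; thus $A$ is reduced iff $f$ has no repeated root iff $\alpha$ is separable over $K$. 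For an arbitrary finite $L/K$, if $L/K$ is separable the primitive element theorem furnishes a separable generator and reduces to the simple case, where $A\cong M^{[L:K]}$ is reduced. Conversely, if $L/K$ is inseparable I would pick an inseparable $\alpha\in L$; since $M$ is flat over $K$ the inclusion $K(\alpha)\hookrightarrow L$ yields an embedding $M\otimes_K K(\alpha)\hookrightarrow A$, and the nonzero nilpotent of $M\otimes_K K(\alpha)$ from the simple case maps to a nonzero nilpotent of $A$. This step, converting the repeated-root (the $x\mapsto x^p$ phenomenon in characteristic $p$) manifestation of inseparability into an honest nilpotent, is the one I expect to be the main obstacle.

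Finally, with (a)$\Leftrightarrow$(b) established the remaining equivalences are formal and all route through (b). For (b)$\Rightarrow$(c): when $A\cong M^r$, an element $a=(a_1,\dots,a_r)$ factors as $u\,e$, where $e$ is the idempotent supported on $\{i:a_i\neq0\}$ and $u$ is any unit agreeing with $a$ there. For (c)$\Rightarrow$(b): if a nilpotent $a$ is written $a=ue$ with $u$ a unit, then $e=u^{-1}a$ is a nilpotent idempotent, forcing $e=0$ and $a=0$, so $A$ is reduced. For (b)$\Rightarrow$(d): the standard idempotents of $M^r$ generate it as an $M$-algebra. For (d)$\Rightarrow$(b): the $M$-subalgebra generated by the idempotents of a finite-dimensional $A$ is reduced, since those idempotents form a finite Boolean algebra whose span is a product of copies of $M$; if they generate all of $A$, then $A$ is reduced. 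Combining (a)$\Leftrightarrow$(b), (b)$\Leftrightarrow$(c) and (b)$\Leftrightarrow$(d) yields the theorem, and one observes that the only property of $M$ actually used is that it is a field containing $K$ in which every polynomial splits.
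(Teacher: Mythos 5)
The paper does not prove this statement at all: Theorem \ref{t4.13} is quoted verbatim from Magid's book (\cite{Magid}, Theorem 1.3) purely as a recalled result, so there is no internal proof to compare yours against. Your blind argument is, however, a correct and essentially self-contained proof of the standard kind. The reduction to finite subextensions is sound because $M$ is free, hence flat, over $K$, so the transition maps $M\otimes_K L_i\to M\otimes_K L_j$ are injective and every nilpotent, idempotent, or unit-times-idempotent factorization is witnessed at a finite stage; the core equivalence (a)$\Leftrightarrow$(b) via $M\otimes_K K(\alpha)\cong M[x]/(f)\cong\prod_i M[x]/\bigl((x-\beta_i)^{e_i}\bigr)$, the primitive element theorem in the separable direction, and the embedding of $M\otimes_K K(\alpha)$ for an inseparable $\alpha$ in the other direction, is exactly the right mechanism; and the formal equivalences with (c) and (d) through (b) are all valid. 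The only point I would tighten is the blanket claim that ``each of (a)--(d) passes to and from the finite stages'': for (d) the converse transfer (colimit generated by idempotents implies each stage is) is not obvious, but your proof does not actually need it, since your arguments for (c)$\Rightarrow$(b) (a nilpotent idempotent $e=u^{-1}a$ vanishes) and (d)$\Rightarrow$(b) (the span of a finite Boolean algebra of idempotents over $M$ is reduced) work directly in the colimit without finite-dimensionality. Your closing observation that only the splitting of polynomials over $K$ in $M$ is used is also correct and explains why Magid states the companion Theorem \ref{t4.12} for $M$ algebraic over $K$ and this one for $M$ merely containing $K$.
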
	

Below we have conclusions from the above results.

\begin{tw}
	\label{t4.14}
	In Theorems \ref{t4.12} and \ref{t4.13} if assume $L^{G(L\mid K)}=K$, then the conditions (a) -- (d) are equivalent to
	\begin{itemize}
		\item[(e) ] $K+XL[X]$ be a Noetherian.
		\item[(f) ] $[L\colon K]<\infty$
		\item[(g) ] $K\subset L$ be an algebraic extension.
		\item[(h) ] $K\subset L$ be a Galois extension.
	\end{itemize}
\end{tw}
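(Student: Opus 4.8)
The plan is to treat the eight conditions as two blocks --- the Magid block (a)--(d) and the composite block (e)--(h) --- and to forge a single bridge between them, letting the earlier results supply every internal equivalence. By Theorems \ref{t4.12} and \ref{t4.13} the conditions (a)--(d) are already mutually equivalent, each being a restatement of the separability of $L$ over $K$; so it suffices to establish the equivalences inside the block (e)--(h) and then to connect one representative of each block, for which I would use the link (a) $\Leftrightarrow$ (h).

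First I would dispatch the composite block. The equivalence (e) $\Leftrightarrow$ (f) is exactly Theorem \ref{t4.5} and requires no further argument. Invoking the standing hypothesis $L^{G(L\mid K)}=K$, Proposition \ref{p4.6} yields (e) $\Leftrightarrow$ (g) and Proposition \ref{p4.11} yields (e) $\Leftrightarrow$ (h); chaining these through (e) as a hub gives the full equivalence (e) $\Leftrightarrow$ (f) $\Leftrightarrow$ (g) $\Leftrightarrow$ (h) with no computation of my own.

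It then remains to bridge the two blocks. The implication (h) $\Rightarrow$ (a) is immediate, since a Galois extension is by definition separable. For the reverse direction I would again lean on the standing hypothesis: for an \emph{algebraic} extension the equality $L^{G(L\mid K)}=K$ is precisely the characterization of $K\subset L$ as Galois, so (g) together with the hypothesis already delivers (h), and the link (a) $\Leftrightarrow$ (h) closes once we know that $L$ is algebraic over $K$ --- which holds throughout the ambient setting of Theorems \ref{t4.12} and \ref{t4.13}. Assembling the two blocks along this link gives the equivalence of all eight conditions.

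The main obstacle is the finiteness passage concealed in Proposition \ref{p4.6}, namely the step (g) $\Rightarrow$ (e), equivalently (h) $\Rightarrow$ (f): one must be sure that an algebraic extension satisfying $L^{G(L\mid K)}=K$ is in fact \emph{finite}, for only then is $K+XL[X]$ Noetherian by Theorem \ref{t4.5}. This is the delicate point on which the whole statement turns, and I would scrutinize carefully how Proposition \ref{p4.6} is meant to force $[L\colon K]<\infty$ out of algebraicity together with the fixed-field hypothesis; every remaining implication is either a citation of the Magid theorems or a formal consequence of the definition of a Galois extension.
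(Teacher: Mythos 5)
Your proposal follows essentially the same route as the paper: the paper's proof is the cycle (a) $\Rightarrow$ (g) $\Rightarrow$ (e) $\Rightarrow$ (h) $\Rightarrow$ (a) together with (e) $\Leftrightarrow$ (f) from Theorem \ref{t4.5}, invoking Propositions \ref{p4.6} and \ref{p4.11} exactly as you do, so the two arguments differ only in bookkeeping (your hub at (e) versus the paper's cycle). The worry you flag about the finiteness step concealed in Proposition \ref{p4.6} is well placed --- the paper's own proof of that proposition rests on the claim that every normal extension is finite, which is false in general (e.g.\ $\overline{\mathbb{Q}}/\mathbb{Q}$) --- but that is a defect of the cited proposition rather than a divergence between your argument and the paper's.
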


\begin{proof}
	(h)$\Rightarrow$(a) -- Obvious.
	
	\medskip
	
	(a)$\Rightarrow$(g)$\Rightarrow$(e)$\Rightarrow$(h)\ \ If $K\subset L$ be a separable extension, then be an algebraic extension. By Proposition \ref{p4.6} $K+XL[X]$ be a Noetherian. By Proposition \ref{p4.11} $K\subset L$ be a Galois extension.
	
	\medskip
	
	\noindent 
	(e)$\Rightarrow$(f) -- Theorem \ref{t4.5}.
\end{proof}

\begin{tw}
	\label{t4.15}
	In Theorem \ref{t4.14} if assume that $K$ is a perfect field and $L^{G(L\mid K)}=K$, then the conditions (a) -- (h) are equivalent to 
	
	\medskip 
	
	\noindent  
	(i) $K\subset L$ be a normal extension.
\end{tw}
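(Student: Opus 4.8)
The plan is to fold the new condition (i) into the chain of Theorem \ref{t4.14} by proving the single equivalence (h)$\Leftrightarrow$(i); since (a)--(h) are already mutually equivalent there, attaching (i) to either end suffices.

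First I would establish (h)$\Rightarrow$(i). This direction needs no new hypotheses: a Galois extension is, by definition, both normal and separable, hence in particular normal. (The implication Galois $\Rightarrow$ normal was already invoked in the proof of Proposition \ref{p4.6} and again in Proposition \ref{p4.11}.)

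The real content is (i)$\Rightarrow$(h). Assume $K\subset L$ is normal. Normality is a property of algebraic extensions, so $L/K$ is algebraic, i.e. condition (g) holds and every element of $L$ is a root of some polynomial in $K[X]$. At this point I would bring in the standing hypothesis that $K$ is perfect: over a perfect field every algebraic extension is separable, so $L/K$ is separable, i.e. condition (a) holds. An extension that is simultaneously normal and separable is Galois, so $L/K$ is Galois, which is exactly (h). The auxiliary assumption $L^{G(L\mid K)}=K$ is then consistent with, and can be used to realize, the identification of the fixed field of $G(L\mid K)$ with $K$, as in Proposition \ref{p4.11}.

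I expect the only point requiring care to be the convention that a normal extension is taken to be algebraic, since it is precisely this that lets perfectness of $K$ upgrade normality to separability, and thence to the Galois condition via the standard fact normal $+$ separable $=$ Galois. No genuinely hard step should arise: the difficult work---linking Noetherianness of $K+XL[X]$ to the algebraic, separable, and Galois conditions---has already been carried out in Propositions \ref{p4.6}--\ref{p4.11} and assembled in Theorem \ref{t4.14}, so here it remains only to attach normality to that established web of equivalences.
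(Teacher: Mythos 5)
Your proposal is correct and follows essentially the same route as the paper: the paper proves (h)$\Rightarrow$(i) as obvious and (i)$\Rightarrow$(a) by noting that normality gives algebraicity and perfectness of $K$ then gives separability, which is exactly the content of your (i)$\Rightarrow$(h) step once the already-established equivalence (a)$\Leftrightarrow$(h) of Theorem \ref{t4.14} is invoked. The only cosmetic difference is that you close the loop at (h) while the paper closes it at (a).
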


\begin{proof}
	(i)$\Rightarrow$(a)\ \ If $K\subset L$ be a normal extension, then be an algebraic extension. By definition perfect field $K\subset L$ be a separable extension.
	
	\medskip
	
	(h)$\Rightarrow$(i)\ \ Obvious.
\end{proof}

\section{An SR property in polynomial composites}
\label{R5}

Recall from \cite{Matysiak6} a monoid $H$ is called $SR$-monoid, if $\Sqf H=\Gpr H$.
We present facts based on monoids. They can be easily transferred to integral domains.

\begin{pr}
	\label{p5.1}
	If $H$ is an $AP$-monoid, then $R$ is an $SR$-monoid.
\end{pr}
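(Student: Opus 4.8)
The plan is to unwind the definition of an $SR$-monoid, namely $\Sqf H=\Gpr H$, and to establish the two inclusions separately. The inclusion $\Gpr H\subseteq\Sqf H$ is free: it is exactly the observation already recorded in the Introduction that every radical element (every product of pairwise non-associated primes) is square-free, so nothing new is needed there. All the real work goes into the reverse inclusion $\Sqf H\subseteq\Gpr H$, i.e. into showing that in an $AP$-monoid every square-free element is a product of pairwise non-associated primes.

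First I would take an arbitrary square-free $x\in H$ and write it as a finite product of atoms, $x=a_1\cdots a_n$. The $AP$-hypothesis then upgrades each atom $a_i$ to a prime element. Next I would use square-freeness to rule out repetitions: if two of the factors, say $a_i$ and $a_j$ with $i\neq j$, were associated, then $a_i^2$ would divide $x$ up to a unit, contradicting $x\in\Sqf H$. Hence the $a_i$ are pairwise non-associated primes and $x\in\Gpr H$, which closes the argument.

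If $\Gpr H$ is read as the set of genuinely radical elements rather than as products of distinct primes, one extra routine step is required, namely that a product $p_1\cdots p_n$ of pairwise non-associated primes is radical. I would verify this by a short induction: from $p_1\cdots p_n\mid t^m$ each prime $p_i$ divides $t$, and since the $p_i$ are non-associated primes their product then divides $t$, so that $\Gpr H=\Rdl H$ and the two readings agree.

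The main obstacle is the very first step, the existence of a factorization $x=a_1\cdots a_n$ into atoms. Being an $AP$-monoid does not by itself force atomicity, so this is exactly the point at which I must invoke atomicity of $H$ (the standing hypothesis on the monoids considered in \cite{Matysiak5}, \cite{Matysiak6}); once a finite atomic factorization is available, the $AP$ property together with square-freeness makes the remaining reasoning purely formal. Everything else, the bookkeeping of associates and the induction on primes, is routine and uses only cancellation in $H$.
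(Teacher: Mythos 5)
Your proof is correct and follows essentially the same route as the paper: factor the square-free element into irreducibles, use the $AP$ hypothesis to upgrade each irreducible to a prime, and use square-freeness to conclude the factors are pairwise non-associated. You are in fact slightly more careful than the paper, which writes $s=q_1\cdots q_n$ with $q_i\in\Irr H$ without comment, whereas you correctly flag that this step needs atomicity (a standing hypothesis in the cited sources) since the $AP$ property alone does not supply such a factorization.
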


\begin{proof}
	Let $s\in\Sqf R$. Then $s=q_1\dots q_n$, where $q_1$, $\dots$, $q_n\in\Irr R$, $s_i\rpr s_j$. Since $R$ is $AP$-monoid, then all $q_i\in\Prime R$ and since $s=q_1\dots q_n$ and $q_i\rpr q_j$ then $s\in\Gpr H$.
\end{proof}

The opposite fact generally does not hold.

\begin{ex}
	\label{e5.2}
	Let $H$ be a non-group monoid such that every element of $H$ is a square. For example $\mathbb{Q}_{\geqslant 0}$. Then
	$\Sqf H=\Gpr H=H^{\ast}$, so $H$ is an SR-monoid. But since $\Irr H\subset \Sqf H$, then $\Irr H$ must be empty. Similarly $\Prime H$ is also empty. There is not an $AP$-property. So $H$ is $SR$-monoid, non-$AP$-monoid.
\end{ex}

\begin{pr}
	\label{p5.3}
	Let $R$ be a factorial monoid. Then the following conditions are equivalent:
	\begin{itemize} 
		\item[(a) ] $H$ is an $AP$-monoid,
		\item[(b) ] $H$ is an $SR$-monoid.
	\end{itemize} 
\end{pr}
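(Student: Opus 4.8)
The plan is to establish the two implications separately. Throughout I write $H$ for the monoid in question, using that the factoriality hypothesis makes $H$ atomic, so that every nonunit of $H$ factors into elements of $\Irr H$.

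The implication (a) $\Rightarrow$ (b) requires no new work: it is exactly Proposition \ref{p5.1}, which asserts that every $AP$-monoid is an $SR$-monoid. I would simply cite that statement.

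For the converse (b) $\Rightarrow$ (a), I would show directly that the $SR$ property forces every atom to be prime. Let $q \in \Irr H$ be arbitrary. A single irreducible is, trivially, a product of pairwise coprime irreducibles, so $q \in \Sqf H$. By hypothesis $\Sqf H = \Gpr H$, whence $q \in \Gpr H$; that is, $q = p_1 \cdots p_k$ with $p_1, \dots, p_k \in \Prime H$ and $p_i \rpr p_j$ for all $i \neq j$. Since $q$ is irreducible and each $p_i$ is a nonunit, a factorization with $k \geq 2$ is impossible; therefore $k = 1$ and $q = p_1 \in \Prime H$. As $q$ was an arbitrary element of $\Irr H$, every irreducible of $H$ is prime, which is precisely the assertion that $H$ is an $AP$-monoid.

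The reverse implication is where the content lies, but even there no serious obstacle appears: the two facts I rely on---that an irreducible lies in $\Sqf H$, and that an irreducible expressed as a product of primes must collapse to a single prime---are immediate once the definitions are unwound, the only care being the handling of units and associates. Factoriality enters only to guarantee atomicity, and in fact the same argument goes through for any atomic monoid. This dovetails with Example \ref{e5.2}, which shows that the atomic hypothesis cannot be dropped: there $H$ is an $SR$-monoid but possesses no irreducibles, and so is not an $AP$-monoid.
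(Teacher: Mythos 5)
Your argument is correct, but it takes a genuinely different route from the paper's, and it actually supplies something the paper's proof does not. For (a) $\Rightarrow$ (b) you and the paper coincide: both cite Proposition \ref{p5.1}. For the converse, however, the paper never argues from the $SR$ hypothesis at all; after ``it is sufficient to show that every square-free is radical'' it takes $s\in\Sqf H$ and uses factoriality ($\Irr H=\Prime H$) to verify that $s$ is radical --- i.e.\ it re-derives condition (b) from the standing factoriality hypothesis rather than deducing (a) from (b). That is harmless only because a factorial monoid satisfies both (a) and (b) automatically, which renders the stated equivalence vacuous. Your proof of (b) $\Rightarrow$ (a) is the substantive missing piece: an irreducible $q$ is square-free, so $q\in\Gpr H$, so $q$ is a product of pairwise coprime primes, and irreducibility collapses that product to a single prime. (This reading of $\Gpr H$ as the set of such products is exactly how the paper uses it in the proof of Proposition \ref{p5.1}, so the step is legitimate.) Your closing remark is also essentially right and matches Corollary \ref{c5.4}: atomicity, not factoriality, is what the statement needs, and it is needed only on the (a) $\Rightarrow$ (b) side, where Proposition \ref{p5.1} tacitly factors a square-free element into pairwise coprime atoms; your direction (b) $\Rightarrow$ (a) uses no finiteness hypothesis at all. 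The one caveat is your appeal to Example \ref{e5.2}: a monoid with $\Irr H=\emptyset$ satisfies ``every atom is prime'' vacuously, so that example blocks (b) $\Rightarrow$ (a) only under the paper's stricter, non-vacuous reading of the $AP$ property; what it unambiguously shows is that $SR$ alone does not imply factoriality.
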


\begin{proof}
	From Proposition \ref{p5.1} we have (a) $\Rightarrow$ (b). So it sufficient to show that every square-free is radical. Let $s\in\Sqf H$ and $s\mid x^m$ for all $x\in H$ and $m\in\mathbb{N}_{>1}$. Since $H$ is factorial, then $s=q_1\dots q_n$, with $q_i\in\Irr H=\Prime H$ and $q_i\rpr q_j$ and $x=p_1\dots p_r$, with $p_i\in\Irr H=\Prime H$. Then $q_1\dots q_n\mid (p_1\dots p_r)^m$. 
	
	\medskip
	
	We see that for all $i\in\{1, \dots, n\}$ we have $q_i\mid (p_1\dots p_r)^m$. Since $q_i\in\Prime H\subset\Gpr H$, then $q_i\mid p_1\dots p_r$, i.e. $q_i\mid x$. Thus $\lcm(q_1, \dots, q_n)\mid x$ and $q_i\rpr q_j$, then $s\mid x$.
\end{proof}

\begin{cor}
	\label{c5.4}
	$H$ is factorial if and only if $H$ is atomic and $H$ is $SR$-monoid.
\end{cor}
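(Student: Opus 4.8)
The plan is to reduce everything to the classical equivalence ``$H$ is factorial $\iff$ $H$ is atomic and every atom of $H$ is prime,'' i.e. $H$ is an atomic $AP$-monoid. Granting this, the forward implication is immediate: a factorial monoid is atomic by definition, and since each of its atoms is prime it is an $AP$-monoid, so Proposition~\ref{p5.1} gives that it is an $SR$-monoid. (One may also argue directly: a square-free element of a factorial monoid is a product of pairwise coprime primes and is therefore radical, so $\Sqf H=\Gpr H$.) Thus the whole content of the corollary lies in the converse, where the goal is to promote the $SR$-hypothesis to the $AP$-property under the standing assumption that $H$ is atomic.

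For the converse I would \emph{not} argue atom by atom. Each atom $q$ is square-free, hence lies in $\Sqf H=\Gpr H$ and is radical; but radicality of a single atom does not force primality (the principal ideal $(q)$ may admit several minimal primes), so the argument must use that \emph{every} square-free element is radical. I therefore proceed by contraposition: assume $H$ is atomic but not factorial, so some atom $q$ is not prime, and choose atoms $\rho_1,\dots,\rho_n$ with
\[
q\mid \rho_1\cdots\rho_n,\qquad q\nmid\rho_j\ \ (1\le j\le n),
\]
taking $n$ minimal with this property (note $n\ge 2$). Writing $\rho_1\cdots\rho_n=qt$ and setting $w=q\rho_1$, I claim $w$ is a square-free element that is not radical, contradicting $SR$.

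Non-radicality is the transparent half. From $\rho_1\cdots\rho_n=qt$ one computes
\[
(\rho_1\cdots\rho_n)^2=(qt)\,(\rho_1\,\rho_2\cdots\rho_n)=(q\rho_1)\,\big(t\,\rho_2\cdots\rho_n\big),
\]
so $w=q\rho_1$ divides $(\rho_1\cdots\rho_n)^2$, while $w\nmid \rho_1\cdots\rho_n$: indeed $q\rho_1\mid\rho_1\cdots\rho_n$ would give, after cancelling $\rho_1$, that $q\mid\rho_2\cdots\rho_n$, contradicting the minimality of $n$. Hence $w\mid P^2$ but $w\nmid P$ for $P=\rho_1\cdots\rho_n$, so $w$ is not radical.

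The main obstacle is the square-freeness of $w=q\rho_1$. Clearly $q\not\sim\rho_1$ (since $q\nmid\rho_1$), so the only danger is that some \emph{third} atom $e$ satisfies $e^2\mid q\rho_1$; comparing lengths this forces $q\rho_1\sim e^2$, i.e. a second, genuinely different, length-two factorization of $w$. Excluding this is the delicate point of the proof: one must choose the data $(q;\rho_1,\dots,\rho_n)$ minimally enough — for instance by also minimizing the length of a non-unique factorization arising from the failure of primality of $q$ — so that $q\rho_1$ cannot itself factor non-uniquely as a square. Once square-freeness is secured, $w$ witnesses $\Sqf H\neq\Gpr H$, contradicting $SR$ and completing the contrapositive. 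I would close by remarking that Corollary~\ref{c5.4} is precisely the atomic analogue of Proposition~\ref{p5.3}, which established the same $AP$/$SR$ dichotomy under the a priori stronger hypothesis of factoriality.
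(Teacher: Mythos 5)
Your forward direction is fine and is exactly what the paper makes available: a factorial monoid is atomic and an $AP$-monoid, and Proposition~\ref{p5.1} then gives the $SR$-property. The genuine gap is in the converse, and it sits precisely where you flag it: the square-freeness of $w=q\rho_1$. The non-radicality half is correct (from $\rho_1\cdots\rho_n=qt$ one gets $w\mid P^2$ with $P=\rho_1\cdots\rho_n$, and $w\nmid P$ follows by cancelling $\rho_1$ and invoking minimality of $n$), but your remark that ``comparing lengths forces $q\rho_1\sim e^2$'' is not valid: an atomic monoid that is not yet known to be half-factorial can have an element with factorizations of length $2$ and of length $3$ or more, so $q\rho_1=e^2c$ with $c$ a nonunit cannot be excluded by length considerations. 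What minimality of $n$ actually buys is this: if $q\rho_1=e^2c$ with $e$ an atom, then $e\not\sim q$ (else $q\mid\rho_1$ by cancellation) and $e\not\sim\rho_1$ (else $\rho_1\mid q$), so $e\mid q\rho_1$ with $e\nmid q$, $e\nmid\rho_1$ is a \emph{new} witness of non-primality with $n=2$. That contradicts minimality only when the globally minimal $n$ is at least $3$. In the typical case where the minimal $n$ already equals $2$ (e.g.\ $q=2$, $\rho_1=1+\sqrt{-5}$, $\rho_2=1-\sqrt{-5}$ in $\mathbb{Z}[\sqrt{-5}]$), a failure of square-freeness of $q\rho_1$ merely reproduces a witness of the same size, and your proposed extra minimization (``also minimizing the length of a non-unique factorization'') is not specified in a way that yields a well-founded descent. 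As written, the implication ``atomic and $SR$ implies factorial'' is not proved.

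For context, the paper itself offers no proof of this corollary: it is presented as a consequence of Proposition~\ref{p5.3}, but that proposition assumes factoriality in its hypothesis and its written proof only establishes $AP\Rightarrow SR$ (essentially Proposition~\ref{p5.1} again), never $SR\Rightarrow AP$. So the step you are attempting --- atomic plus $SR$ implies $AP$, whence factoriality by the classical characterization --- is exactly the part missing from both your argument and the paper. To complete your approach you must exhibit, from a non-prime atom, an element that is provably square-free and not radical; the candidate $q\rho_1$ is reasonable (and does work in $\mathbb{Z}[\sqrt{-5}]$, where $2(1+\sqrt{-5})$ is square-free, divides $6^2$, and does not divide $6$), but an honest argument that some witness can be chosen with $q\rho_1$ admitting no factorization of the form $e^2c$ is still required.
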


The $SR$-property holds in pre-Schreier monoids, $GCD$-monoids. So it is enough to consider square-free dependencies.

\begin{pr}
	\label{p5.5}
	Let $K$ be a field and $L$ its a field of fractions. Then $K+XL[X]$ is a pre-Schreier domain.
\end{pr}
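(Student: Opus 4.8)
The plan is to read the hypothesis as ``$K$ a domain with field of fractions $L$'' (so that $\operatorname{Frac}(K)=L$ is meaningful), and to prove that $T=K+XL[X]$ is pre-Schreier, i.e.\ that every nonzero element is primal. I would start from the pullback description
$$T=\{\,f\in L[X]:f(0)\in K\,\},$$
$T$ being the fibre product of the inclusion $K\hookrightarrow L$ along the evaluation $L[X]\to L$, $f\mapsto f(0)$; by Proposition \ref{p2.1} the units of $T$ are exactly $K^{\ast}$. Fixing a nonzero $c\in T$ with $c\mid ab$ in $T$, the goal is to produce a splitting $c=c_1c_2$ with $c_1\mid a$ and $c_2\mid b$ in $T$.

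The first move is to factor inside $L[X]$. Since $L[X]$ is a PID it is a GCD, hence Schreier, hence pre-Schreier domain, so $c$ is primal there: $c=\gamma_1\gamma_2$ with $\gamma_1\mid a$ and $\gamma_2\mid b$ in $L[X]$. I would then observe that the $X$-adic orders are automatically compatible: writing $v$ for the order of vanishing at $0$, the divisibilities $\gamma_1\mid a$ and $\gamma_2\mid b$ give $v(\gamma_1)\le v(a)$ and $v(\gamma_2)\le v(b)$, and any factor of positive $X$-order has constant term $0\in K$ and so already lies in $T$. Thus the only way $\gamma_1,\gamma_2$ (or the cofactors $a/\gamma_1$, $b/\gamma_2$) can fail to lie in $T$ is through a nonzero constant term lying in $L\setminus K$.

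To repair those constant terms I would use that $L=\operatorname{Frac}(K)$ together with the hypothesis that $K$ itself is pre-Schreier (for instance a GCD domain, or a field). From $c\mid ab$ in $T$ one gets $c(0)\mid a(0)b(0)$ in $K$, and primality of $c(0)$ in $K$ yields $c(0)=\delta_1\delta_2$ with $\delta_1\mid a(0)$ and $\delta_2\mid b(0)$ in $K$. The aim is then to rescale $\gamma_1,\gamma_2$ by a single unit $t\in L^{\ast}$ so that the two factors acquire constant terms $\delta_1,\delta_2\in K$ (and hence land in $T$) while their product stays equal to $c$; here the fraction-field relation is what lets such a $t$ be produced by clearing denominators against $K$, and the choice of the $\delta_i$ is what forces the cofactors back into $T$, giving $c_1\mid a$ and $c_2\mid b$ in $T$.

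The step I expect to be the real obstacle is exactly this reconciliation: the primal factorisation produced in $L[X]$ knows nothing about the subring $K$, while the factorisation of the constant term lives entirely in $K$, and making one rescaling serve both constraints at once is where the GCD/pre-Schreier hypothesis on $K$ and the fraction-field relation become indispensable. That the hypothesis cannot simply be dropped is visible at the opposite extreme $K\subsetneq L$ of arbitrary fields, where $X$ is an atom of $T$ that is not prime, hence not primal, so $T$ fails to be pre-Schreier there; this is what convinces me the correct reading is $L=\operatorname{Frac}(K)$. A cleaner alternative that avoids the delicate constant-term bookkeeping would be to prove directly that $T$ is a GCD domain, computing $\gcd_T(a,b)$ as $\gcd_{L[X]}(a,b)$ corrected by a $\gcd_K$ of the relevant constant data, and then to invoke the standard implications GCD $\Rightarrow$ Schreier $\Rightarrow$ pre-Schreier.
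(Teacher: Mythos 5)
Your reading of the hypothesis is the only one that makes the proposition non-vacuous: taken literally ($K$ a field and $L$ its field of fractions) one has $L=K$, so $T=K[X]$ is a PID and there is nothing to prove; and your remark that for a proper extension of fields $K\subsetneq L$ the element $X$ is irreducible but not prime (hence not primal) in $K+XL[X]$ correctly shows the conclusion really does require $L$ to be the fraction field of a non-field $K$. For what it is worth, the paper's own ``proof'' is the single sentence that one need only ``check free coefficients,'' so there is no detailed argument there to compare against; your sketch is already more explicit about where the content lies.

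Nevertheless there is a genuine gap, and you flag it yourself: the decisive ``reconciliation'' step is announced but never executed. Two things are missing. First, the argument silently requires $K$ (now a domain) to be pre-Schreier; this is not in the statement, and it cannot be avoided, since for $d,d'\in K\setminus\{0\}$ one has $d\mid d'$ in $T$ iff $d\mid d'$ in $K$ (degree reasons plus $T\cap L=K$), so the pre-Schreier property of $T$ forces that of $K$. Second, the rescaling splits into genuinely different cases that the sketch does not separate. When $c(0)\neq 0$ one can normalize $\gamma_i'=\gamma_i/\gamma_i(0)$, use primality of $c(0)$ in $K$ to write $c(0)=\delta_1\delta_2$ with $\delta_1\mid a(0)$, $\delta_2\mid b(0)$, set $c_i=\delta_i\gamma_i'$, and verify that the cofactors have constant terms $a(0)/\delta_1$, $b(0)/\delta_2\in K$; this works, but it is a computation, not a rescaling by one unit $t$. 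When $c(0)=0$ the factorization of the constant term in $K$ is unavailable, and one must instead track the $X$-adic orders $n_1+n_2=v(c)$ and choose $t\in L^{\ast}$ (for instance $t=\alpha$, the constant term of $a/\gamma_1$, exploiting that the constant term of $ab/c$ lies in $K$) so that both cofactors land in $T$ at once --- an argument of a different shape that the proposal does not touch. Until these cases are written out, what you have is a credible plan rather than a proof; the alternative route via ``$T$ is a GCD domain'' is indeed the classical Costa--Mott--Zafrullah approach to $D+XD_S[X]$, but it too needs $K$ to be a GCD domain, again strictly more than the stated hypothesis.
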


\begin{proof}
	It follows from definition of pre-Schreier domain and it sufficient to check free coefficients.
\end{proof}

\begin{pr}
	\label{p5.6}
	Let $H$ be a monoid and $S\subset H$ be an multiplicative subset. Then, if $H$ is an $SR$-monoid, then $H_S$ is $SR$-monoid.
\end{pr}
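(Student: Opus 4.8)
The plan is to show that localization preserves the $SR$-property by tracking how square-free elements and radical (group-prime, $\Gpr$) elements behave under the canonical map $H \to H_S$. The statement to prove is that if $H$ is an $SR$-monoid (that is, $\Sqf H = \Gpr H$), then so is $H_S$. Since we always have the inclusion $\Gpr H_S \subseteq \Sqf H_S$ (every radical element is square-free, as noted in the introduction), the real content is the reverse inclusion $\Sqf H_S \subseteq \Gpr H_S$, and this is where I would focus the argument.

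First I would take an arbitrary $s/t \in \Sqf H_S$ with $s \in H$ and $t \in S$. Because units in $H_S$ include the images of all elements of $S$, and square-freeness is a property invariant under multiplication by units, I may replace $s/t$ by its associate $s$ (viewed in $H_S$), so it suffices to treat elements coming from $H$. The key reduction step is to relate square-freeness in $H_S$ back to square-freeness in $H$: I would argue that if the image of $s$ is square-free in $H_X_S$, then one can strip off the part of $s$ whose prime factors meet $S$ (those become units in $H_S$) and reduce to an element $s'$ of $H$ whose image generates the same principal ideal in $H_S$ and which is square-free in $H$ itself. Concretely, writing a factorization of $s$ and discarding the factors that become invertible in $H_S$, the surviving factor should be square-free in $H$ precisely because its image is square-free in $H_S$.

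Once $s'$ is square-free in $H$, the hypothesis that $H$ is an $SR$-monoid gives $s' \in \Gpr H$, i.e. $s'$ is radical in $H$. The final step is to show that the radical (group-prime) property descends to the localization: the image of a $\Gpr$ element of $H$ lies in $\Gpr H_S$. This follows because the defining divisibility condition for radicality is preserved under the localization homomorphism; if $s' \mid x^m$ in $H_S$ then clearing the denominator from $S$ (which is a unit) yields a divisibility $s' \mid (x')^m$ in $H$ up to associates, whence $s' \mid x'$ in $H$ by radicality, and mapping back gives the desired divisibility in $H_S$. Combining these three steps yields $\Sqf H_S \subseteq \Gpr H_S$, and with the trivial reverse inclusion we conclude $\Sqf H_S = \Gpr H_S$, so $H_S$ is an $SR$-monoid.

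The main obstacle I anticipate is the reduction step that passes from square-freeness in $H_S$ to square-freeness in $H$ (the descent of the factorization structure), since localization can merge non-associate elements into associates and can turn non-square-free elements square-free by killing the repeated prime factors that lie in $S$. Making the bookkeeping precise — especially identifying exactly which factors survive and verifying that no hidden square reappears among the surviving factors — requires care, and may implicitly use that the monoids under consideration are nice enough (for instance pre-Schreier or $GCD$, as emphasized just before the statement) for the notions of $\Sqf$ and $\Gpr$ to interact cleanly with divisibility. If the intended monoids are assumed to have such structure, the argument streamlines considerably; otherwise the square-free descent is the delicate part that must be established first.
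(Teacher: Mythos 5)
Your overall strategy --- reduce an element of $\Sqf H_S$ to an associate coming from $H$, transfer square-freeness down to $H$, apply the $SR$-hypothesis there, and transfer radicality back up to $H_S$ --- is the same skeleton the paper uses: the two transfer facts you sketch at the ends of your argument are exactly the results the paper cites (\cite{Matysiak6}, Proposition 3.1, for the ascent of square-freeness, and \cite{Reinhart}, Lemma 2.1, for the ascent of the $\Gpr$-property). The genuine gap is the middle step you yourself flag as delicate: the descent of square-freeness from $H_S$ to $H$. An element $s\in H$ whose image is square-free in $H_S$ need not be square-free in $H$ (take $H$ free on $x$ and $y$, $S$ the submonoid generated by $x$, and $s=x^2y$), and your proposed repair --- factor $s$ and discard the factors that become units in $H_S$ --- presupposes atomicity, which is not among the hypotheses; even granting such a factorization one must still verify that no square survives among the remaining factors. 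As written, this step is asserted rather than proved, so the argument is incomplete.

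It is worth noting that the paper's own proof does not close this gap either: it starts from $a\in\Sqf H=\Gpr H$ and derives a contradiction from $a\in\Sqf H_S\setminus\Gpr H_S$, which only covers elements of $H_S$ associate to something \emph{already} square-free in $H$ --- precisely the reduction you identified as missing. So your diagnosis of where the difficulty lies is accurate, and your write-up is in that respect more honest than the published proof; but neither your sketch nor the paper supplies the descent (or an alternative direct verification that $\Sqf H_S\subseteq\Gpr H_S$), and without an additional hypothesis such as atomicity the proposal does not yet constitute a complete proof.
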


\begin{proof}
	Assume $a\in\Sqf H=\Gpr H$.
	Suppose $a\in\Sqf H_S\setminus\Gpr H_S$. By \cite{Matysiak6}, Proposition 3.1 if $a\in\Sqf H$, then $a\in\Sqf H_S$. By \cite{Reinhart} Lemma 2.1,  if $a\notin\Gpr H_S$, then $a\notin\Gpr H$. A contradiction.
\end{proof}

\begin{cor}
	\label{c5.7}
	If $T=K+XL[X]$ is atomic and $\Irr T\subset\Gpr T$, then  $T$ is radical factorial.
\end{cor}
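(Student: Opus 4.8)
The plan is to unwind the definition of \emph{radical factorial}: a monoid (or domain) is radical factorial precisely when every nonzero nonunit is a finite product of radical elements, and in the present framework the radical elements are exactly those lying in $\Gpr T$ (the elements built from pairwise coprime primes, equivalently the $s$ satisfying $s \mid x^m \Rightarrow s \mid x$, as exploited in the proof of Proposition \ref{p5.3}). Thus it suffices to produce, for an arbitrary nonzero nonunit $x \in T$, a factorization of $x$ into elements of $\Gpr T$.

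First I would invoke atomicity of $T$. By hypothesis $T = K+XL[X]$ is atomic, so any nonzero nonunit $x$ admits a factorization $x = q_1 \cdots q_n$ with each $q_i \in \Irr T$. This is the only place the atomic assumption is needed, and it supplies the existence (not the uniqueness) of a factorization into atoms.

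Next I would apply the containment $\Irr T \subseteq \Gpr T$: each atom $q_i$ is then a radical element, that is $q_i \in \Gpr T$. Consequently $x = q_1 \cdots q_n$ is already displayed as a product of radical elements, which is exactly the assertion that $T$ is radical factorial. Combining atomicity with the hypothesis in this way yields the claim directly.

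The argument is essentially immediate once the definitions are aligned, so there is no genuine computational obstacle. The one point that deserves care is confirming that membership in $\Gpr T$ coincides with the radical property for the composite $T$; this is where the pre-Schreier structure of $K+XL[X]$ established in Proposition \ref{p5.5}, together with the general monoid results of \cite{Matysiak7}, is used, ensuring that the $\Gpr$-elements are precisely the radical elements and that a product of such elements witnesses radical factoriality. With that identification secured, the corollary follows exactly as sketched above.
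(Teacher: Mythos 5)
Your argument is correct and is exactly the intended one-line deduction that the paper leaves implicit: atomicity supplies a factorization of each nonzero nonunit into irreducibles, the hypothesis $\Irr T\subset\Gpr T$ makes every such factor radical, and a product of radical elements is precisely what radical factoriality demands. The only superfluous step is your appeal to Proposition \ref{p5.5} and the pre-Schreier structure to identify $\Gpr T$ with the radical elements --- that identification is how $\Gpr$ is used throughout Section \ref{R5} (and in \cite{Matysiak6}, \cite{Reinhart}), so no structural input about $K+XL[X]$ beyond the two stated hypotheses is actually needed.
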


\end{document}